\def\RR{\mathbb{R}}
\def\ZZ{\mathbb{Z}}
\def\CC{\mathbb{C}}
\def\NN{\mathbb{N}}
\newcommand{\al}{{\alpha}}
\newcommand{\la}{{\lambda}}
\newcommand{\f}{{\varphi}}
\newcommand{\cX}{{\mathcal{X}}}
\newcommand{\te}{{\theta}}
\newcommand{\dd}{{d}}
\newcommand{\fdot}{\,\cdot\,}
\def\Ddots{\mathinner{\mkern1mu\raise\p@
\vbox{\kern7\p@\hbox{.}}\mkern2mu
\raise4\p@\hbox{.}\mkern2mu\raise7\p@\hbox{.}\mkern1mu}}
\newcommand{\cH}{\mathcal{H}}
\newcommand{\cB}{\mathcal{B}}
\newcommand{\cA}{\mathcal{A}}
\newcommand{\eps}{\varepsilon}
\newcommand{\cD}{\mathcal{D}}
\newcommand{\cG}{\mathcal{G}}
\newcommand{\fh}{\mathfrak{h}}
\newcommand{\fK}{\mathfrak{K}}
\newcommand{\cJ}{\mathcal{J}}
\newcommand{\cW}{\mathcal{W}}
\newcommand{\cI}{\mathcal{I}}
\newcommand{\fI}{\mathfrak{I}}
\DeclareMathOperator{\dom}{dom}
\DeclareMathOperator{\spa}{span}
\newcommand{\ci}[1]{_{ {}_{\scriptstyle #1}}}
\newcommand{\ti}[1]{_{\scriptstyle \text{\rm #1}}}
\chardef\mathlig@atcode\count255
\def\actively#1#2{\begingroup\uccode`\~=`#2\relax\uppercase{\endgroup#1~}}
\def\mathlig@gobble{\afterassignment\mathlig@next@cmd\let\mathlig@next= }
\def\mathlig@delim{\mathlig@delim}
\def\mathlig@defcs#1{\expandafter\def\csname#1\endcsname}
\def\mathlig@let@cs#1#2{\expandafter\let\expandafter#1\csname#2\endcsname}
\def\mathlig@appendcs#1#2{\expandafter\edef\csname#1\endcsname{\csname#1\endcsname#2}}
\def\mathlig#1#2{\mathlig@checklig#1\mathlig@end\mathlig@defcs{mathlig@back@#1}{#2}\ignorespaces}
\def\mathlig@checklig#1#2\mathlig@end{%
 \expandafter\ifx\csname mathlig@forw@#1\endcsname\relax
 \expandafter\mathchardef\csname mathlig@back@#1\endcsname=\mathcode`#1%
 \mathcode`#1"8000\actively\def#1{\csname mathlig@look@#1\endcsname}%
 \mathlig@dolig#1\mathlig@delim
\fi
\mathlig@checksuffix#1#2\mathlig@end
}
\def\mathlig@checksuffix#1#2\mathlig@end{%
\ifx\mathlig@delim#2\mathlig@delim\relax\else\mathlig@checksuffix@{#1}#2\mathlig@end\fi
}
\def\mathlig@checksuffix@#1#2#3\mathlig@end{%
\expandafter\ifx\csname mathlig@forw@#1#2\endcsname\relax\mathlig@dosuffix{#1}{#2}\fi
\mathlig@checksuffix{#1#2}#3\mathlig@end
}
\def\mathlig@dosuffix#1#2{%
\mathlig@appendcs{mathlig@toks@#1}{#2}%
\mathlig@dolig{#1}{#2}\mathlig@delim
}
\def\mathlig@dolig#1#2\mathlig@delim{%
 \mathlig@defcs{mathlig@look@#1#2}{%
 \mathlig@let@cs\mathlig@next{mathlig@forw@#1#2}\futurelet\mathlig@next@tok\mathlig@next}%
 \mathlig@defcs{mathlig@forw@#1#2}{%
  \mathlig@let@cs\mathlig@next{mathlig@back@#1#2}%
  \mathlig@let@cs\checker{mathlig@chck@#1#2}%
  \mathlig@let@cs\mathligtoks{mathlig@toks@#1#2}%
  \expandafter\ifx\expandafter\mathlig@delim\mathligtoks\mathlig@delim\relax\else
  \expandafter\checker\mathligtoks\mathlig@delim\fi
  \mathlig@next
 }%
 \mathlig@defcs{mathlig@toks@#1#2}{}%
 \mathlig@defcs{mathlig@chck@#1#2}##1##2\mathlig@delim{%
  \ifx\mathlig@next@tok##1%
   \mathlig@let@cs\mathlig@next@cmd{mathlig@look@#1#2##1}\let\mathlig@next\mathlig@gobble
  \fi
  \ifx\mathlig@delim##2\mathlig@delim\relax\else
   \csname mathlig@chck@#1#2\endcsname##2\mathlig@delim
  \fi
 }%
%
 \ifx\mathlig@delim#2\mathlig@delim\else
  \mathlig@defcs{mathlig@back@#1#2}{\csname mathlig@back@#1\endcsname #2}%
 \fi
}%
\mathchardef\ordinarycolon\mathcode`\:
\def\vcentcolon{\mathrel{\mathop\ordinarycolon}}
\numberwithin{equation}{section}
\theoremstyle{plain}
\newtheorem{theo}{Theorem}[section]
\newtheorem{cor}[theo]{Corollary}
\newtheorem{lem}[theo]{Lemma}
\newtheorem{prop}[theo]{Proposition}
\theoremstyle{definition}
\newtheorem{defn}[theo]{Definition}
\newtheorem*{theorem*}{Theorem}
\theoremstyle{remark}
\newtheorem*{ex*}{Example}
\theoremstyle{remark}
\newtheorem*{exs*}{Examples}
\theoremstyle{remark}
\newtheorem*{rems*}{Remarks}
\theoremstyle{remark}
\newtheorem*{rem*}{Remark}
\theoremstyle{definition}
\newtheorem*{idea*}{Idea}
\title[Boundary Triples and $m$-functions for Powers of the Jacobi Operator]{Boundary Triples and Weyl $m$-functions for Powers of the Jacobi Differential Operator}
\author{Dale~Frymark}
\address{Department of Mathematics, Stockholm University, Kr\"aftriket 6, 106 91 Stockholm, Sweden.}
\email{dale@math.su.se}
\keywords{Boundary Triples, Self-Adjoint Extension Theory, Singular Sturm--Liouville Operators, Nevanlinna--Herglotz Functions, Weyl $m$-functions}
\thanks{The author was partially supported by the Swedish Foundation for Strategic Research under grant AM13-0011.}
 \subjclass[2010]{34B20, 34B24, 34L15, 47B25, 47E05}
\begin{document}

\begin{abstract}
The abstract theory of boundary triples is applied to the classical Jacobi differential operator and its powers in order to obtain the Weyl $m$-function for several self-adjoint extensions with interesting boundary conditions: separated, periodic and those that yield the Friedrichs extension. These matrix-valued Nevanlinna--Herglotz $m$-functions are, to the best knowledge of the author, the first explicit examples to stem from singular higher-order differential equations.

The creation of the boundary triples involves taking pieces, determined in \cite{FL}, of the principal and non-principal solutions of the differential equation and putting them into the sesquilinear form to yield maps from the maximal domain to the boundary space. These maps act like quasi-derivatives, which are usually not well-defined for all functions in the maximal domain of singular expressions. However, well-defined regularizations of quasi-derivatives are produced by putting the pieces of the non-principal solutions through a modified Gram--Schmidt process.
\end{abstract}

\maketitle

\setcounter{tocdepth}{1}
\tableofcontents

\section{Introduction}

The necessary boundary conditions for self-adjoint extensions of Sturm--Liouville operators with limit-circle endpoints are usually more difficult to determine than in the regular case (i.e.~the 1D Schr\"odinger operator). This is mainly due to the fact that Dirichlet and Neumann boundary conditions no longer yield self-adjoint extensions in these cases. Other tools that come from perturbation theory and describe the spectral theory of changing boundary conditions also appear to have not been implemented yet for limit-circle endpoints \cite{AK, S}. 

Let $\ell[\fdot]$ be a symmetric differential expression on the weighted space $L^2[(a,b),w]$ that is in the limit-circle case at the endpoints $a,b\in\RR\cup\{\pm\infty\}$. This classification implies there exist two linearly independent solutions, say $u(x,\la)$ and $v(x,\la)$, to the equation 
\begin{align*}
    (\ell-\la)f=0,
\end{align*}
at each endpoint, where $f$ is taken from the operator's associated maximal domain $\cD\ti{max}$. The Glazman--Krein--Naimark (GKN) theory says that all domains associated with self-adjoint extensions can be obtained by imposing boundary conditions that use these solutions, or other functions that satisfy the conditions of Theorem \ref{t-gkn2}, in the sesquilinear form.

Boundary conditions in the regular case are often related to quasi-derivatives, which are natural building blocks for the sesquilinear form. These expressions are not well-defined for all $f\in\cD\ti{max}$ in the limit-circle case, and the culprit is the non-principal solution $v(x,\la)$. This solution in the sesquilinear form produces only a regularization of the $0$-th quasi-derivative, which can be seen explicitly e.g.~in the recent manuscript \cite{FLN}. Fortunately, this regularization can still be used to create self-adjoint extensions and determine spectral properties by constructing the Weyl $m$-function. Analysis of boundary conditions and the $m$-function in the limit-circle case, in general and for examples, can be found in \cite{All, B, BH, BG, EK, Ful, FP, GZ, MZ, NZ, Titch}

However, these problems become more pronounced and difficult when higher-order ordinary differential equations are considered. Significant progress has been made in recent years in describing the boundary conditions that yield self-adjoint extensions for such operators \cite{Codd, EI, Kod, ASZ1, ASZ2, ASZ3, AZ, AZBook, WaWe}. Recent developments in left-definite theory \cite{DHS, ELT, LW02, LW13, LW15} mean it is now more convenient to work with powers of Sturm--Liouville operators that are bounded from below. In this manuscript, we focus on powers of the Jacobi differential operator for several reasons: the Jacobi expression is extremely well-studied \cite{B, BEZ, E, I, K, Sz} and contains many interesting examples as special choices of its parameters, it is limit-circle at both endpoints, analytic properties of domains of powers were given in \cite{EKLWY}, and the recent manuscripts \cite{FFL, FL} gave new insight into the structure of the defect spaces of powers.

Boundary triples are naturally applicable to the study of boundary conditions for Sturm--Liouville operators \cite{B07, DHMS, KK, MN} thanks to their connections with the sesquilinear form and quasi-derivatives. The recent book \cite{BdS} gives an extensive treatment of these applications and forms the basis for the background of Section \ref{s-bg}. In particular, the theory of boundary triples yields formulas for the $\gamma$-field, Weyl $m$-function and the transformation of boundary conditions.

Indeed, we obtain the Weyl $m$-function explicitly for several important self-adjoint extensions of the Jacobi differential operator and its powers. Even in the uncomposed case, this is thought to be new despite how studied the operator is. Examples of Weyl $m$-functions for other classical, but surprisingly unknown, examples can be found in \cite{FLN}. The analysis of the $n$-th power of the Jacobi operator yields a Weyl $m$-function that is a $(2n\times2n)$ matrix-valued Nevanlinna--Herglotz function, see \cite{GT} for more on these functions. Matrix-valued Nevanlinna--Herglotz functions have been obtained when using operator-valued potentials of Schr\"odinger operators, e.g.~\cite{GWZ}, and matrix-valued Sturm--Liouville operators, e.g.~\cite{CG, CGN}, but to the best knowledge of the author this is the first time one has been explicitly determined when singular endpoints are present.

The method of boundary triples is only able to be applied after obtaining operations that act as quasi-derivatives on $\cD\ti{max}$. The linear span of pieces of principal and non-principal solutions were found to constitute a basis for the defect spaces in \cite{FL}, implying that they should generate such operations when put into the sesquilinear form. This is partly true, as the $2n$ pieces of the principal solution generate exactly the $n$-th through $2n-1$-st quasi-derivatives at each endpoint, see Definition \ref{d-quasiderivgeneral}. However, the first $n$ quasi-derivatives prove elusive; mimicking the main obstacle in the uncomposed Sturm--Liouville case. See i.e.~\cite{EI} for more on how quasi-derivatives are central to the self-adjoint extension theory of higher-order ordinary differential equations.

A matrix of sesquilinear forms that shows the interaction between pieces of solutions reveals that the generated operations are degenerate in some sense. The pieces of the non-principal solutions are then put through a modified Gram--Schmidt process, associated with the sesquilinear form instead of an inner product, and the resulting functions are shown to generate the proper operations. These well-defined regularizations of quasi-derivatives are thus suitable to build a boundary triple.

The use of pieces of solutions from \cite{FL} instead of the known full solutions is greatly beneficial both for forming intuition and for calculations. Ostensibly, the full solutions can generate operations and because they form a basis for the defect spaces they should be able to form a boundary triple. In practice, these operations appear to be more ``degenerate'' than ours and are also very difficult to deal with in calculations, e.g.~simply plugging two solutions into a sesquilinear form associated with a general power seems unfeasible. Subsection \ref{ss-otherexamples} shows that these two methods yield the same operations in the uncomposed case.

The Jacobi differential operator having two limit-circle endpoints means that the spectrum is discrete, and in this case simple, so the spectrum for the powers of the operator can be inferred in some instances. The obtained Weyl $m$-functions for powers of the operator therefore don't yield surprising information in the most common cases. The ease with which they are obtained does allow for interesting examples, such as separated and periodic boundary conditions though. Additionally, the method that produces the quasi-derivatives allows for the Weyl $m$-function to be determined for powers of other Sturm--Liouville operators, which can possess more complicated spectra, and possibly for more general higher-order ordinary differential equations. The only inhibiting factors to such generalizations are proving some structural results of \cite{EKLWY} and \cite{FL} for the operator (or class of operators) of interest. However

\subsection{Outline}

Section \ref{s-bg} introduces tools from different fields that concern boundary conditions for self-adjoint extensions. Sturm--Liouville operators and their powers are briefly discussed and then the classical framework for self-adjoint extensions, attributed collectively to Glazman--Krein--Naimark, is presented in Subsection \ref{ss-extensions}. Prerequisite facts and definitions in the theory of boundary triples are recalled from \cite{BdS} in Subsection \ref{ss-bt}.

Section \ref{s-jacobi} illustrates how boundary triples can be used to compute the Weyl $m$-function by focusing on the classical Jacobi differential operator. Surprisingly, the resulting $2\times 2$ matrix-valued Nevanlinna--Herglotz function seems to be new to the literature. The maps that form the boundary triple are generated by full solutions but are shown to be equivalent to those created by just pieces of solutions in Subsection \ref{ss-otherexamples}. The operations and boundary triples for the Legendre and Laguerre differential operators are also presented.

Section \ref{s-powers} contains the main results of the paper. Pieces of solutions to the Jacobi differential equation are shown to define some quasi-derivatives when placed into the sesquilinear form, but those coming from non-principal solutions require some alterations. They are put through a modified Gram--Schmidt process in Subsection \ref{ss-regularizations} to help make their impacts not overlap with one another, and when put into the sesquilinear form these new functions yield regularizations of quasi-derivatives. Subsection \ref{ss-natural} concludes that these operations create a boundary triple for the associated maximal domain.

The general theory of boundary triples is used on the setup of the previous Section to obtain explicit Weyl $m$-functions for specific self-adjoint extensions in Section \ref{s-mfunctions}, including the Friedrichs extension. General expressions are also available to represent all possible self-adjoint extensions. Subsection \ref{ss-otherbc} shows how the chosen boundary triple can be transformed to represent other interesting boundary conditions, including separated and periodic.

\section{Background}\label{s-bg}

Consider the classical Sturm--Liouville differential equation
\begin{align}\label{d-sturmdif}
\dfrac{d}{dx}\left[p(x)\dfrac{df}{dx}(x)\right]+q(x)f(x)=-\lambda w(x)f(x),
\end{align}
where $p(x),w(x)>0$ a.e.~on $(a,b)$ and $q(x)$ real-valued a.e.~on $(a,b)$, with $a,b\in\RR\cup\{\pm \infty\}$. 
Furthermore, $1/p(x),q(x),w(x)\in L^1\ti{loc}[(a,b),dx]$. Additional details about Sturm--Liouville theory can be found in \cite{AG, BEZ, E, GZ, Z}.
The differential expression can be viewed as a linear operator, mapping a function $f$ to the function $\ell[f]$ via
\begin{align}\label{d-sturmop}
\ell[f](x):=-\dfrac{1}{w(x)}\left(\dfrac{d}{dx}\left[p(x)\dfrac{df}{dx}(x)\right]+q(x)f(x)\right).
\end{align}
This unbounded operator acts on the Hilbert space $L^2[(a,b),w]$, endowed with the inner product 
$
\langle f,g\rangle:=\int_a^b f(x)\overline{g(x)}w(x)dx.
$
In this setting, the eigenvalue problem $\ell[f](x)=\lambda f(x)$ can be considered. However, the operator acting via $\ell[\fdot]$ on $L^2[(a,b),w]$ is not self-adjoint a priori. Additional boundary conditions are required to ensure this property.

Additionally, the operator $\ell^n[\fdot]$ is defined as the operator $\ell[\fdot]$ composed with itself $n$ times, creating a differential operator of order $2n$. Every formally symmetric differential expression $\ell^n[\fdot]$ of order $2n$ with coefficients $a_k:(a,b)\to\RR$ and $a_k\in C^k(a,b)$, for $k=0,1,\dots,n$ and $n\in\NN$, has the {\bf Lagrangian symmetric form} 
\begin{align}\label{e-lagrangian}
\ell^n[f](x)=\sum_{j=1}^n(-1)^j(a_j(x)f^{(j)}(x))^{(j)}, \text{ } x\in(a,b).
\end{align}
Further details can be found in \cite{DS, ELT, LWOG}.

\subsection{Extension Theory}\label{ss-extensions}

There is a vast amount of literature concerning the extensions of symmetric operators. Here we present only that which pertains to self-adjoint extensions.

\begin{defn}[variation of {\cite[Section 14.2]{N}}]\label{d-defect}
For a a symmetric, closed operator ${\bf  A}$ on a Hilbert space $\cH$, define 
the {\bf positive defect space} and the {\bf negative defect space}, respectively, by
$$\cD_+:=\left\{f\in\cD({\bf  A}^*)~:~{\bf  A}^*f=if\right\}
\qquad\text{and}\qquad
\cD_-:=\left\{f\in\cD({\bf  A}^*)~:~{\bf  A}^*f=-if\right\}.$$
\end{defn}

Note that the self-adjoint extensions of a symmetric operator coincide with those of the closure of the symmetric operator {\cite[Theorem XII.4.8]{DS}}, so without loss of generality we assume that all considered operators are closed.

The dimensions dim$(\cD_+)=m_+$ and dim$(\cD_-)=m_-$, called the {\bf positive} and {\bf negative deficiency indices of ${\bf  A}$} respectively, will play an important role. They are usually conveyed as the pair $(m_+,m_-)$. 
The deficiency indices of $T$ correspond to how ``far'' from self-adjoint ${\bf  A}$ is. A symmetric operator ${\bf  A}$ has self-adjoint extensions if and only if its deficiency indices are equal {\cite[Section 14.8.8]{N}}.

\begin{theo}[{\cite[Theorem 14.4.4]{N}}]\label{t-decomp}
If ${\bf  A}$ is a closed, symmetric operator, then the subspaces $\cD({\bf  A})$, $\mathcal{D}_+$, and $\mathcal{D}_{-}$ are linearly independent and their direct sum coincides with $\cD({\bf  A}^*)$, i.e.,
$$\cD({\bf  A}^*)=\cD({\bf  A})\dotplus\mathcal{D}_+ \dotplus\mathcal{D}_{-}.$$
(Here, subspaces $\cX_1, \cX_2, \hdots ,\cX_p$ are said to be {\bf linearly independent}, if $\sum_{i=1}^p x_i = 0$ for $x_i\in \cX_i$ implies that all $x_i=0$.)
\end{theo}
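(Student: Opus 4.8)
The plan is to prove that the three subspaces $\cD(\mathbf{A})$, $\cD_+$, and $\cD_-$ are linearly independent and that their algebraic direct sum fills out all of $\cD(\mathbf{A}^*)$. The natural tool is the \emph{second Green's formula} (the boundary/sesquilinear form attached to $\mathbf{A}^*$): for $f,g\in\cD(\mathbf{A}^*)$ one has $\langle \mathbf{A}^*f,g\rangle-\langle f,\mathbf{A}^*g\rangle=0$ whenever either argument lies in $\cD(\mathbf{A})$ (since $\mathbf{A}$ is symmetric and $\mathbf{A}=\mathbf{A}^{**}$ on its domain). Equivalently, I would introduce the graph inner product $\langle f,g\rangle_{\mathbf{A}^*}:=\langle f,g\rangle+\langle \mathbf{A}^*f,\mathbf{A}^*g\rangle$ on $\cD(\mathbf{A}^*)$, under which $\cD(\mathbf{A}^*)$ is complete because $\mathbf{A}^*$ is closed. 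The whole statement is then a clean orthogonality-plus-spanning fact about this graph-Hilbert-space structure.

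First I would establish the decomposition at the level of the graph inner product by showing that $\cD_+$, $\cD_-$, and the graph-closure $\cD(\mathbf{A})$ are \emph{mutually orthogonal} in $\langle\cdot,\cdot\rangle_{\mathbf{A}^*}$. For $u\in\cD_+$ (so $\mathbf{A}^*u=iu$) and $v\in\cD_-$ (so $\mathbf{A}^*v=-iv$), a direct computation gives
\begin{align*}
\langle u,v\rangle_{\mathbf{A}^*}=\langle u,v\rangle+\langle iu,-iv\rangle=\langle u,v\rangle-\langle u,v\rangle=0,
\end{align*}
so $\cD_+\perp\cD_-$. For $w\in\cD(\mathbf{A})$ and $u\in\cD_+$ one computes, using $\mathbf{A}^*w=\mathbf{A}w$ and the symmetry relation $\langle \mathbf{A}w,u\rangle=\langle w,\mathbf{A}^*u\rangle$,
\begin{align*}
\langle w,u\rangle_{\mathbf{A}^*}=\langle w,u\rangle+\langle \mathbf{A}w,\mathbf{A}^*u\rangle=\langle w,u\rangle+\langle w,(\mathbf{A}^*)^2u\rangle=\langle w,u\rangle+\langle w,-u\rangle=0,
\end{align*}
since $(\mathbf{A}^*)^2u=i(\mathbf{A}^*u)=i\cdot iu=-u$; the identical computation with $u$ replaced by $v\in\cD_-$ gives $\langle w,v\rangle_{\mathbf{A}^*}=0$ as well. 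Orthogonality of nontrivial subspaces immediately yields linear independence in the stated sense: if $w+u+v=0$ with $w\in\cD(\mathbf{A})$, $u\in\cD_+$, $v\in\cD_-$, pairing with each summand in the graph inner product forces each to vanish.

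It then remains to prove \emph{spanning}, namely that $\cD(\mathbf{A})\dotplus\cD_+\dotplus\cD_-$ exhausts $\cD(\mathbf{A}^*)$. Here I would take $h\in\cD(\mathbf{A}^*)$ graph-orthogonal to all three pieces and show $h=0$. Graph-orthogonality to $\cD(\mathbf{A})$ means $\langle h,w\rangle+\langle \mathbf{A}^*h,\mathbf{A}w\rangle=0$ for all $w\in\cD(\mathbf{A})$, which says precisely that $\mathbf{A}^*h\in\cD((\mathbf{A})^*)=\cD(\mathbf{A}^*)$ with $\mathbf{A}^*(\mathbf{A}^*h)=-h$, i.e.~$h$ lies in $\ker((\mathbf{A}^*)^2+I)=\ker(\mathbf{A}^*-iI)\dotplus\ker(\mathbf{A}^*+iI)=\cD_+\dotplus\cD_-$. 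Graph-orthogonality to $\cD_+$ and $\cD_-$ as well then forces $h=0$ by the orthogonality already established. The main obstacle, and the step deserving the most care, is this spanning argument: it requires correctly identifying the graph-orthogonal complement of $\cD(\mathbf{A})$ inside $\cD(\mathbf{A}^*)$ with the solution space of $(\mathbf{A}^*)^2 h=-h$, which uses the defining property of the adjoint together with closedness of $\mathbf{A}$, and then factoring that second-order relation into the two defect eigenspaces. Everything else is the routine orthogonality bookkeeping sketched above.
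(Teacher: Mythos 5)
The paper itself supplies no argument here: Theorem \ref{t-decomp} is quoted verbatim from Naimark \cite{N}, so the benchmark is the classical von Neumann proof given there, which runs through \emph{ranges} rather than the graph inner product. In that proof one uses $\|(\mathbf{A}+i)w\|^2=\|\mathbf{A}w\|^2+\|w\|^2$ and closedness of $\mathbf{A}$ to see that $\Ran(\mathbf{A}+i)$ is closed, identifies $\Ran(\mathbf{A}+i)^{\perp}=\ker(\mathbf{A}^*-i)=\cD_+$, and then, given $f\in\cD(\mathbf{A}^*)$, decomposes $(\mathbf{A}^*+i)f=(\mathbf{A}+i)w+2iu$ with $w\in\cD(\mathbf{A})$, $u\in\cD_+$, so that $v:=f-w-u\in\ker(\mathbf{A}^*+i)=\cD_-$; uniqueness follows by applying $\mathbf{A}^*+i$ to a vanishing sum. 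Your route is genuinely different and also standard: you make $\cD(\mathbf{A}^*)$ a Hilbert space under the graph inner product, check that $\cD(\mathbf{A})$, $\cD_+$, $\cD_-$ are mutually graph-orthogonal (your computations here are correct), and then show any $h$ graph-orthogonal to all three satisfies $(\mathbf{A}^*)^2h=-h$ and hence lies in $\cD_+\dotplus\cD_-$, forcing $h=0$. What your approach buys is the stronger \emph{orthogonal} version of the decomposition --- exactly the refinement the paper mentions after Theorem \ref{t-neumanndecomp}, citing \cite{FFL}; what Naimark's approach buys is an explicit construction of the three components that never needs to pass from density to equality.

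That last point is the one place where your sketch needs to be tightened. Showing that the graph-orthogonal complement of $\cD(\mathbf{A})\dotplus\cD_+\dotplus\cD_-$ is trivial proves only that this sum is \emph{dense} in the graph Hilbert space; to conclude it equals $\cD(\mathbf{A}^*)$ you must also note that the sum is closed, which holds because the three summands are mutually orthogonal and each is closed in the graph norm: $\cD(\mathbf{A})$ because $\mathbf{A}$ is a closed operator, and $\cD_{\pm}$ because they are eigenspaces of the closed operator $\mathbf{A}^*$. This is where closedness of $\mathbf{A}$ really enters your argument --- not in identifying the orthogonal complement of $\cD(\mathbf{A})$, which uses only the definition of the adjoint. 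Also, the factorization $\ker\bigl((\mathbf{A}^*)^2+I\bigr)=\cD_+\dotplus\cD_-$ deserves its one-line justification: write $h=h_++h_-$ with $h_{\pm}=\tfrac{1}{2}\bigl(h\mp i\mathbf{A}^*h\bigr)$ and check $\mathbf{A}^*h_{\pm}=\pm ih_{\pm}$. With these two points made explicit, your proof is complete and correct.
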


We now let $\ell[\fdot]$ be a Sturm--Liouville differential expression in order to introduce more specific definitions. It is important to reiterate that the analysis of self-adjoint extensions does not involve changing the differential expression associated with the operator at all, merely the domain of definition by applying boundary conditions. 

\begin{defn}[{\cite[Section 17.2]{N}}]\label{d-max}
The {\bf maximal domain} of $\ell[\fdot]$ is given by 
\begin{align*}
\cD\ti{max}=\cD\ti{max}(\ell):=\left\{f:(a,b)\to\mathbb{C}~:~f,pf'\in\text{AC}\ti{loc}(a,b);
f,\ell[f]\in L^2[(a,b),w]\right\}.
\end{align*}
\end{defn}

The designation of ``maximal'' is appropriate in this case because $\cD\ti{max}(\ell)$ is the largest possible subspace that $\ell$ maps back into $L^2[(a,b),w]$. For $f,g\in\cD\ti{max}(\ell)$ and $a<\al\le \beta<b$ the {\bf sesquilinear form} associated with $\ell$ by 
\begin{equation}\label{e-greens}
[f,g]\bigg|_{\al}^{\beta}:=\int_{\al}^{\beta}\left\{\ell[f(x)]\overline{g(x)}-\ell[\overline{g(x)}]f(x)\right\}w(x)dx.
\end{equation}

\begin{theo}[{\cite[Section 17.2]{N}}]\label{t-limits}
The limits $[f,g](b):=\lim_{x\to b^-}[f,g](x)$ and $[f,g](a):=\lim_{x\to a^+}[f,g](x)$ exist and are finite for $f,g\in\cD\ti{max}(\ell)$.
\end{theo}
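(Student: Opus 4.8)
The plan is to use the Lagrange identity to rewrite the sesquilinear form \eqref{e-greens} as a boundary (Wro\'nskian) term, and then to exploit the defining $L^2$ conditions of $\cD\ti{max}(\ell)$ to show the underlying integral converges absolutely at each endpoint. The conceptual point is that the failure of self-adjointness is governed entirely by $L^2$ data, which Cauchy--Schwarz converts into an absolutely convergent boundary integral.

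First I would record the Lagrange identity. Since $p,q,w$ are real-valued, integrating by parts twice shows that the integrand of \eqref{e-greens} is an exact derivative:
\begin{align*}
\left\{\ell[f]\overline{g}-\ell[\overline{g}]f\right\}w=\frac{d}{dx}\left[p\left(f\overline{g}'-f'\overline{g}\right)\right].
\end{align*}
Writing $[f,g](x):=p(x)\big(f(x)\overline{g}'(x)-f'(x)\overline{g}(x)\big)$, this yields $[f,g]\big|_\al^\beta=[f,g](\beta)-[f,g](\al)$. The requirements $f,pf'\in\text{AC}\ti{loc}(a,b)$ built into $\cD\ti{max}(\ell)$ (and the same for $g$) guarantee both that the pointwise object $[f,g](x)$ is well-defined for every $x\in(a,b)$ and that the integration by parts is legitimate on each $[\al,\beta]\subset(a,b)$.

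The key step is then to observe that the integrand is absolutely integrable on all of $(a,b)$. Indeed, $f,g\in\cD\ti{max}(\ell)$ means $f,g,\ell[f],\ell[g]\in L^2[(a,b),w]$, and since the coefficients are real we also have $\ell[\overline{g}]=\overline{\ell[g]}\in L^2[(a,b),w]$. Cauchy--Schwarz in $L^2[(a,b),w]$ then gives
\begin{align*}
\int_a^b\left|\ell[f]\overline{g}-\ell[\overline{g}]f\right|w\,dx\le\|\ell[f]\|\,\|g\|+\|\ell[g]\|\,\|f\|<\infty.
\end{align*}
Fixing a base point $c\in(a,b)$ and writing $[f,g](\beta)=[f,g](c)+\int_c^\beta\{\ell[f]\overline{g}-\ell[\overline{g}]f\}w\,dx$, the absolute integrability just established forces the integral to have a finite limit as $\beta\to b^-$; hence $[f,g](b)=\lim_{x\to b^-}[f,g](x)$ exists and is finite, and the argument for $[f,g](a)$ as $\al\to a^+$ is symmetric.

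I do not expect a genuine obstacle here, as the estimate is essentially immediate once the defining $L^2$ conditions of the maximal domain are invoked. The only point requiring mild care is the justification of the Lagrange identity for functions that lie merely in $\text{AC}\ti{loc}$ rather than $C^2$; this is precisely what the hypothesis $pf'\in\text{AC}\ti{loc}(a,b)$ is designed to handle, so the two integrations by parts producing $p(f\overline{g}'-f'\overline{g})$ remain valid.
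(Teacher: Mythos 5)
Your proof is correct, and it is essentially the argument of the source the paper cites: Theorem \ref{t-limits} is quoted from Naimark \cite[Section 17.2]{N} without proof, and the proof there is exactly yours — the Lagrange identity writes $[f,g](\beta)-[f,g](\al)$ as $\int_\al^\beta\{\ell[f]\overline{g}-\ell[\overline{g}]f\}w\,dx$, and Cauchy--Schwarz applied to the $L^2[(a,b),w]$ data built into $\cD\ti{max}(\ell)$ shows this integrand is absolutely integrable on all of $(a,b)$, so both one-sided limits exist and are finite. The only cosmetic discrepancy is the placement of complex conjugates in the Wronskian (the paper's equation \eqref{e-mwronskian} is itself loose on this point), which does not affect the argument.
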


The equation \eqref{e-greens} is {\bf Green's formula} for $\ell[\fdot]$, and in the case of Sturm--Liouville operators it can be explicitly computed using integration by parts to be the modified Wronskian
\begin{align}\label{e-mwronskian}
[f,g]\bigg|_a^b:=p(x)[f'(x)g(x)-f(x)g'(x)]\bigg|_a^b.
\end{align}

\begin{defn}[{\cite[Section 17.2]{N}}]\label{d-min}
The {\bf minimal domain} of $\ell[\fdot]$ is given by
\begin{align*}
\cD\ti{min}=\cD\ti{min}(\ell):=\left\{f\in\cD\ti{max}(\ell)~:~[f,g]\big|_a^b=0~~\forall g\in\cD\ti{max}(\ell)\right\}.
\end{align*}
\end{defn}

The maximal and minimal operators associated with the expression $\ell[\fdot]$ are then defined as ${\bf L}\ti{min}=\{\ell,\cD\ti{min}\}$ and ${\bf L}\ti{max}=\{\ell,\cD\ti{max}\}$ respectively. By {\cite[Section 17.2]{N}}, these operators are adjoints of one another, i.e.~$({\bf L}\ti{min})^*={\bf L}\ti{max}$ and $({\bf L}\ti{max})^*={\bf L}\ti{min}$.

In the context of differential operators, Theorem \ref{t-decomp} can be restated.

\begin{theo}[{\cite[Section 14.5]{N}}]\label{t-neumanndecomp}
Let $\cD\ti{max}$ and $\cD\ti{min}$ be the maximal and minimal domains associated with the differential expression $\ell[\fdot]$, respectively. Then, 
\begin{equation}\label{e-vN}
\cD\ti{max}=\cD\ti{min}\dotplus\cD_+\dotplus\cD_-.
\end{equation}
\end{theo}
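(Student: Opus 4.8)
The plan is to deduce this statement as a direct specialization of the abstract von Neumann decomposition in Theorem \ref{t-decomp}, applied to the concrete operator $\mathbf{A} = \mathbf{L}\ti{min}$. Since the excerpt already records that the minimal and maximal operators are mutually adjoint, namely $(\mathbf{L}\ti{min})^* = \mathbf{L}\ti{max}$ and $(\mathbf{L}\ti{max})^* = \mathbf{L}\ti{min}$, most of the work reduces to checking that $\mathbf{L}\ti{min}$ meets the two hypotheses of Theorem \ref{t-decomp} and then translating the abstract conclusion back into the language of maximal and minimal domains.

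First I would verify that $\mathbf{L}\ti{min}$ is closed and symmetric. Closedness is immediate from the identity $\mathbf{L}\ti{min} = (\mathbf{L}\ti{max})^*$, since the adjoint of any densely defined operator is closed. For symmetry, the inclusion $\cD\ti{min} \subseteq \cD\ti{max}$ coming from Definitions \ref{d-min} and \ref{d-max}, together with the fact that both operators act through the same expression $\ell[\fdot]$, gives $\mathbf{L}\ti{min} \subseteq \mathbf{L}\ti{max} = (\mathbf{L}\ti{min})^*$, which is precisely the statement that $\mathbf{L}\ti{min}$ is symmetric.

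With the hypotheses in hand, Theorem \ref{t-decomp} yields
$$\cD\big((\mathbf{L}\ti{min})^*\big) = \cD(\mathbf{L}\ti{min}) \dotplus \cD_+ \dotplus \cD_-,$$
where by Definition \ref{d-defect} the defect spaces are $\cD_\pm = \{f\in\cD((\mathbf{L}\ti{min})^*)~:~(\mathbf{L}\ti{min})^* f = \pm i f\}$. Substituting $(\mathbf{L}\ti{min})^* = \mathbf{L}\ti{max}$ identifies these with $\cD_\pm = \{f\in\cD\ti{max}~:~\ell[f] = \pm i f\}$, rewrites the left-hand side as $\cD\ti{max}$, and turns the first summand into $\cD\ti{min}$, producing exactly \eqref{e-vN}. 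There is no deep analytic obstacle here, as the theorem is a specialization of an abstract result; the only point requiring genuine care is confirming that the defect spaces in \eqref{e-vN} are understood relative to $\mathbf{A} = \mathbf{L}\ti{min}$, so that $\mathbf{A}^* = \mathbf{L}\ti{max}$ and the eigenvalue conditions are imposed on functions drawn from the maximal domain. Once this identification is pinned down, the directness (linear independence) of the sum and the equality of the domains are inherited verbatim from Theorem \ref{t-decomp}, and the result follows.
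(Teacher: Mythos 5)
Your proposal is correct and matches the paper's own treatment: the paper presents Theorem \ref{t-neumanndecomp} precisely as a restatement of the abstract decomposition of Theorem \ref{t-decomp} applied to the closed symmetric operator ${\bf L}\ti{min}$, using the cited facts $({\bf L}\ti{min})^*={\bf L}\ti{max}$ and $({\bf L}\ti{max})^*={\bf L}\ti{min}$ from Naimark. Your verification of closedness (as an adjoint) and symmetry (${\bf L}\ti{min}\subseteq{\bf L}\ti{max}=({\bf L}\ti{min})^*$), followed by the identification of the defect spaces relative to ${\bf L}\ti{min}$, is exactly the specialization the paper leaves implicit.
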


Equation \eqref{e-vN} is commonly known as {\bf von Neumann's formula}. Here $\dotplus$ denotes the direct sum, and $\cD_+,\cD_-$ are the defect spaces associated with the expression $\ell[\fdot]$. The decomposition can be made into an orthogonal direct sum by using the graph norm, see \cite{FFL}.

If the operator ${\bf L}\ti{min}$ acts via an expression $\ell[\fdot]$ of order $n$ and has any self-adjoint extensions, then the deficiency indices of ${\bf L}\ti{min}$ have the form $(m,m)$, where $0\leq m\leq n$ {\cite[Section 14.8.8]{N}}.
Hence, Sturm--Liouville expressions that generate self-adjoint operators have deficiency indices $(0,0)$, $(1,1)$ or $(2,2)$. If a differential expression is either in the limit-circle case or regular at the endpoint $a$, it requires a boundary condition at $a$. If it is in the limit-point case at the endpoint $a$, it does not require a boundary condition. The analogous statements are true at the endpoint $b$. These facts can be summed up in the following result. 

\begin{theo}\label{t-endpointnumbers}
Let ${\bf L}\ti{min}=\{\ell,\cD\ti{min}\}$, where $\ell$ is a singular Sturm--Liouville differential expression.
\[ 
m_{\pm}({\bf L}\ti{min})=
\begin{cases} 
      2 & \text{if }\ell\text{ is limit-circle at }a\text{ and }b, \\
      1 & \text{if }\ell\text{ is limit-circle at }a\text{ and limit-point at }b\text{ or vice versa}, \\
      0 & \text{if }\ell\text{ is limit-point at }a\text{ and }b. 
   \end{cases}
\]
\end{theo}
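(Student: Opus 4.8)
The plan is to reduce the computation of the deficiency indices to a count of square-integrable solutions of the eigenvalue equation, and then to read off that count from the limit-point/limit-circle classification at each endpoint. Since $\mathbf{L}\ti{max}$ acts via $\ell$ on $\cD\ti{max}$, and membership in $\cD\ti{max}$ already forces $f\in L^2[(a,b),w]$, the defect spaces $\cD_+,\cD_-$ of Definition \ref{d-defect} are precisely the spaces of solutions of $(\ell\mp i)f=0$ lying in $L^2[(a,b),w]$; hence $m_\pm$ equals the number of linearly independent global $L^2$ solutions of $\ell[f]=\pm i f$. Because $(\ell-\la)f=0$ is a second-order linear ODE its solution space $S_\la$ is two-dimensional, and a solution is globally $L^2$ exactly when it is square-integrable near $a$ and near $b$ at once. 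Writing $S_\la^a$ and $S_\la^b$ for the subspaces of $S_\la$ that are $L^2$ near $a$ and near $b$ respectively, we get $m_+=\dim(S_i^a\cap S_i^b)$; and since $\ell$ has real coefficients, complex conjugation carries $\cD_+$ bijectively onto $\cD_-$, so $m_-=m_+$ and it suffices to treat $m_+$.

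Next I would invoke the classical Weyl dichotomy together with its independence of $\la$: at a limit-circle endpoint \emph{every} solution is $L^2$ there, so $\dim S_i^a=2$ when $\ell$ is LC at $a$ and $\dim S_i^a=1$ when it is LP, and likewise at $b$. The three cases then reduce to elementary linear algebra inside the two-dimensional space $S_i$, using the inclusion--exclusion bound $\dim(S_i^a\cap S_i^b)\ge\dim S_i^a+\dim S_i^b-2$ (valid because $S_i^a+S_i^b\subseteq S_i$ has dimension at most $2$) together with the trivial upper bound $\dim(S_i^a\cap S_i^b)\le\min\{\dim S_i^a,\dim S_i^b\}$. If $a,b$ are both LC these give $2\le m_+\le 2$, so $m_+=2$. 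If exactly one endpoint is LC and the other LP they give $1\le m_+\le 1$, so $m_+=1$; the two sub-cases are symmetric.

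The remaining case, LP at both endpoints, is where the real work lies: the bounds above only yield $m_+\in\{0,1\}$, and the main obstacle is to exclude $m_+=1$. I would settle this through the boundary form. For any $u\in\cD\ti{max}$ with $\ell[u]=iu$, the reality of the coefficients gives $\ell[\overline{u}]=\overline{\ell[u]}=-i\overline{u}$, so Green's formula \eqref{e-greens} yields
\[
[u,u]\big|_a^b=\int_a^b\bigl\{\ell[u]\,\overline{u}-\ell[\overline{u}]\,u\bigr\}w\,dx=2i\int_a^b|u|^2w\,dx=2i\,\|u\|^2 ,
\]
and by Theorem \ref{t-limits} this equals $[u,u](b)-[u,u](a)$. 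The key input is the classical characterization of the limit-point case \cite{N}: the endpoint $b$ is LP if and only if $[f,g](b)=0$ for all $f,g\in\cD\ti{max}$, and similarly at $a$. Granting this, both boundary limits of $[u,u]$ vanish in the LP--LP situation, the displayed identity forces $\|u\|=0$, hence $u=0$, so $S_i^a\cap S_i^b=\{0\}$ and $m_+=0$. (Equivalently, the vanishing of the boundary form at both ends gives $\cD\ti{min}=\cD\ti{max}$ by Definition \ref{d-min}, so $\mathbf{L}\ti{min}=\mathbf{L}\ti{max}$ is self-adjoint and has no nonreal eigenvalues.) The hard part is precisely establishing this boundary-form characterization: one must use that an LP endpoint carries, up to scalars, a single $L^2$ solution and that the boundary contribution of any $f\in\cD\ti{max}$ collapses there. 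I would either reproduce this Weyl-type argument in detail or cite it, since the surrounding machinery of \cite{N} is already in use.
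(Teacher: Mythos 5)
Your proposal is correct, but there is essentially nothing in the paper to compare it against: Theorem \ref{t-endpointnumbers} is stated there as a distillation of classical facts, with the surrounding text citing \cite[Section 14.8.8]{N} and the references \cite{BEZ, GZ, NZ, W, Z}, and no proof is given. What you supply is the standard Weyl-theoretic argument, and it is sound. The identification of $\cD_\pm$ from Definition \ref{d-defect} with the $L^2$ solutions of $\ell[f]=\pm if$ is legitimate because any solution of the eigenvalue equation automatically satisfies $f,pf'\in\text{AC}\ti{loc}(a,b)$ and $\ell[f]=\pm if\in L^2[(a,b),w]$, hence lies in $\cD\ti{max}$; conjugation gives $m_-=m_+$ since the coefficients are real; and the dimension counts in the LC--LC and LC--LP cases follow from the two-dimensionality of the solution space exactly as you argue. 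Your treatment of the LP--LP case is also the standard one, and your parenthetical remark is in fact the cleanest route: once one knows that an LP endpoint annihilates the boundary form, Definition \ref{d-min} immediately yields $\cD\ti{min}=\cD\ti{max}$, so ${\bf L}\ti{min}$ is self-adjoint and $m_\pm=0$, without even needing the identity $[u,u]\big|_a^b=2i\|u\|^2$ (which you computed correctly from \eqref{e-greens} and Theorem \ref{t-limits}). The two ingredients you defer to citation --- the $\la$-independence of the Weyl alternative, including the existence of an $L^2$ solution near an LP endpoint for nonreal $\la$, and the vanishing of $[f,g]$ at an LP endpoint for all $f,g\in\cD\ti{max}$ --- are genuinely where the substance lies, but citing them to \cite{N} or \cite{Z} is entirely consistent with the level at which the paper itself treats this statement.
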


Sturm--Liouville differential expressions are extremely well-researched, see e.g.~\cite{BEZ,E} for an encyclopedic reference, so the deficiency indices are well-known in almost all cases of interest. Representative examples are the Jacobi operator, with deficiency indices $(2,2)$, the Laguerre operator, with indices $(1,1)$, and the Hermite operator, with indices $(0,0)$. Jacobi operators are in the limit-circle case at both -1 and 1 (for $\al,\beta$ both in $[0,1)$), Laguerre operators are in the limit-circle case at 0 and the limit point case at $\infty$ (for $\al\in[0,1)$ and $\al^2\neq 1/2$), and Hermite operators are in the limit point case at both $\pm\infty$. Hermite operators are thus essentially self-adjoint and require no boundary conditions. Since we are primarily concerned with only the Jacobi operator this information will suffice, but any of \cite{BEZ, GZ, NZ, W, Z} can be consulted for more information on the classification of endpoints.

The following Theorem explicitly shows how the defect spaces impact self-adjoint extensions. To this end, let $\f_j$, for $j=1,\dots m$, denote an orthonormal basis of $\cD_+$. The functions $\overline{\f_j}$ are thus an orthonormal basis of $\cD_-.$

\begin{theo}[{\cite[Theorem 18.1.2]{N}}]\label{t-gknmatrix}
Every self-adjoint extension ${\bf L}=\{\ell,\cD_{{\bf L}}\}$ of the minimal operator ${\bf L}\ti{min}=\{\ell,\cD^n\ti{min}\}$ with deficiency indices $(m,m)$ can be characterized by means of a unitary $m\times m$ matrix $u=[u_{jk}]$ in the following way:

Its domain of definition $\cD_{{\bf L}}$ is the set of all functions $z(x)$ of the form
$$ z(x)=y(x)+\psi(x), $$
where $y(x)\in\cD\ti{min}$ and $\psi(x)$ is a linear combination of the functions
$$ \psi_j(x)=\f_j(x)+\sum_{k=1}^m u_{kj}\overline{\f_k(x)}, \quad j=1,\dots,m. $$
Conversely, every unitary $m\times m$ matrix $u=[u_{jk}]$ determines in the way described above a certain self-adjoint extension ${\bf L}$ of the operator ${\bf L}\ti{min}$. The correspondence thus established between ${\bf L}$ and $u$ is one-to-one.
\end{theo}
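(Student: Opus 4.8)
The plan is to reconstruct von Neumann's argument from the decomposition already recorded in Theorem~\ref{t-neumanndecomp}. The starting point is a boundary-form identity: writing each $f\in\cD\ti{max}$ uniquely as $f=f_0+f_++f_-$ with $f_0\in\cD\ti{min}$ and $f_\pm\in\cD_\pm$, and using that $[f_0,g]\big|_a^b=0$ for $f_0\in\cD\ti{min}$ while ${\bf L}\ti{max}f_\pm=\pm i f_\pm$, a direct computation with \eqref{e-greens} gives
\[
[f,g]\big|_a^b \;=\; \langle \ell[f],g\rangle-\langle f,\ell[g]\rangle \;=\; 2i\big(\langle f_+,g_+\rangle-\langle f_-,g_-\rangle\big),
\qquad f,g\in\cD\ti{max}.
\]
Hence the sesquilinear boundary form sees only the defect components, and an intermediate extension ${\bf L}$ with ${\bf L}\ti{min}\subseteq{\bf L}\subseteq{\bf L}\ti{max}$ is symmetric exactly when this form vanishes identically on $\cD_{\bf L}$.

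Next I would describe $\cD_{\bf L}$ through the direct sum. Every such domain is $\cD_{\bf L}=\cD\ti{min}\dotplus\cM$ for a subspace $\cM\subseteq\cD_+\dotplus\cD_-$, and by the identity above symmetry forces $\langle f_+,g_+\rangle=\langle f_-,g_-\rangle$ for all $f=f_++f_-$, $g=g_++g_-$ in $\cM$ (polarize the condition $\|f_+\|=\|f_-\|$). A short well-definedness check then shows the assignment $f_+\mapsto f_-$ extends to a linear isometry $V$ from the projection of $\cM$ onto $\cD_+$ into $\cD_-$, with $\cM=\{f_++Vf_+\}$. The core step is to upgrade symmetry to self-adjointness: I would compute the adjoint domain
\[
\cD_{{\bf L}^*}=\cD\ti{min}\dotplus\big\{\,h_++h_-:\ \langle h_+,f_+\rangle=\langle h_-,Vf_+\rangle\ \text{ for all } f_+\in\dom V\,\big\}
\]
directly from the decomposition and show that $\cD_{{\bf L}^*}=\cD_{\bf L}$ holds precisely when $V$ is defined on all of $\cD_+$; since $\dim\cD_+=\dim\cD_-=m$ is finite, $V$ is then automatically onto, i.e.\ $V\colon\cD_+\to\cD_-$ is unitary. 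Conversely each unitary $V$ produces a self-adjoint ${\bf L}$ with $\cD_{\bf L}=\cD\ti{min}\dotplus\{f_++Vf_+:f_+\in\cD_+\}$, and the hypothesis of equal deficiency indices $(m,m)$ is exactly what guarantees such unitaries exist.

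It then remains to pass from the operator $V$ to the matrix $u$. Expanding $V\f_j=\sum_{k=1}^m u_{kj}\overline{\f_k}$ against the given orthonormal bases $\{\f_j\}$ of $\cD_+$ and $\{\overline{\f_k}\}$ of $\cD_-$, the isometry relation reads $\sum_{k=1}^m u_{kj}\overline{u_{kl}}=\delta_{jl}$, so $u=[u_{kj}]$ is a unitary matrix, and every unitary matrix arises from a unique such $V$. Taking $f_+=\f_j$ gives the spanning vectors $\psi_j=\f_j+V\f_j=\f_j+\sum_{k=1}^m u_{kj}\overline{\f_k}$, so $\cD_{\bf L}$ is precisely the set of $z=y+\psi$ with $y\in\cD\ti{min}$ and $\psi\in\spa\{\psi_1,\dots,\psi_m\}$, as claimed. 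Composing the two bijections $u\leftrightarrow V$ and $V\leftrightarrow{\bf L}$ yields the asserted one-to-one correspondence.

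The step I expect to be the main obstacle is exactly this upgrade from symmetric to self-adjoint: proving that $V$ must be defined on all of $\cD_+$ rather than on a proper subspace, the latter yielding only a non-maximal symmetric extension whose adjoint domain is strictly larger. This forces one to compute $\cD_{{\bf L}^*}$ explicitly and verify the equality $\cD_{{\bf L}^*}=\cD_{\bf L}$, which is genuinely more than checking symmetry. Equivalently, one can route the argument through the Cayley transform of ${\bf L}\ti{min}$, where self-adjoint extensions correspond bijectively to unitary extensions of that partial isometry and the full-domain property of $V$ becomes transparent; either way the finite equality of deficiency indices is what keeps the bookkeeping of defect spaces clean.
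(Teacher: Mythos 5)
This statement is quoted in the paper as background (cited to Naimark, Theorem 18.1.2) and is not proved there; your proposal correctly reconstructs the standard von Neumann--Naimark argument that the cited reference itself uses — the boundary-form identity $[f,g]\big|_a^b=2i(\langle f_+,g_+\rangle-\langle f_-,g_-\rangle)$, the correspondence between symmetric extensions and isometries $V$ from subspaces of $\cD_+$ into $\cD_-$, the adjoint-domain computation showing self-adjointness forces $\dom V=\cD_+$ (hence, by finite dimensionality, $V$ unitary), and the passage to the matrix $u$ in the orthonormal bases $\{\f_j\}$, $\{\overline{\f_k}\}$. So your proof is correct and takes essentially the same route as the paper's source.
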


In order to formulate the core Glazman--Krein--Naimark (GKN) Theorems, we recall an generalization of linear independence to one that mods out by a subspace. This subspace will be the minimal domain in applications.

\begin{defn}[{\cite[Section 14.6]{N}}]\label{d-linind}
Let $\cX_1$ and $\cX_2$ be subspaces of a vector space $\cX$ such that $\cX_1\le \cX_2$. Let $\{x_1,x_2,\dots,x_r\}\subseteq \cX_2$. We say that $\{x_1,x_2,\dots,x_r\}$ is {\bf linearly independent modulo $\cX_1$} if
$$\sum_{i=1}^r\al_ix_i\in \cX_1 \text{ implies } \al_i=0\text{ for all }i=1,2,\dots, r.$$
\end{defn}

\begin{theo}[GKN1,~{\cite[Theorem 18.1.4]{N}}]\label{t-gkn1}
Let ${\bf L}=\{\ell,\cD_{{\bf L}}\}$ be a self-adjoint extension of the minimal operator ${\bf L}\ti{min}=\{\ell,\cD\ti{min}\}$ with deficiency indices $(m,m)$. Then the domain $\cD_{{\bf L}}$ consists of the set of all functions $f\in\cD\ti{max}$, which satisfy the conditions
\begin{equation}\label{e-gkn1a}
[f,w_k]\bigg|_a^b=0, \text{ }k=1,2,\dots,m ,
\end{equation}
where $w_1,\dots,w_m\in \cD\ti{max}$ are linearly independent modulo $\cD\ti{min}$ for which the relations
\begin{equation}\label{e-gkn1b}
[w_j,w_k]\bigg|_a^b=0, \text{ }j,k=1,2,\dots,m
\end{equation}
hold.
\end{theo}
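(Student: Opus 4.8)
The plan is to recast the boundary conditions \eqref{e-gkn1a}--\eqref{e-gkn1b} in terms of the sesquilinear form and reduce the entire statement to a finite-dimensional fact about a nondegenerate skew-Hermitian form. The starting point is Green's formula \eqref{e-greens}, which, since $\ell$ has real coefficients, yields
$$[f,g]\bigg|_a^b=\langle {\bf L}\ti{max}f,g\rangle-\langle f,{\bf L}\ti{max}g\rangle,\qquad f,g\in\cD\ti{max}.$$
Because ${\bf L}\ti{min}={\bf L}\ti{max}^*$, any operator ${\bf L}=\{\ell,\cD_{{\bf L}}\}$ with ${\bf L}\ti{min}\subseteq{\bf L}\subseteq{\bf L}\ti{max}$ has adjoint again acting via $\ell$, so this identity shows that $g\in\cD_{{\bf L}^*}$ precisely when $[f,g]\big|_a^b=0$ for all $f\in\cD_{{\bf L}}$. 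Hence $\cD_{{\bf L}^*}$ is exactly the annihilator of $\cD_{{\bf L}}$ under the boundary form, and ${\bf L}$ is self-adjoint if and only if $\cD_{{\bf L}}$ coincides with its own annihilator. The theorem then amounts to identifying which subspaces of $\cD\ti{max}$ are self-annihilating.

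Next I would pass to the finite-dimensional quotient. By von Neumann's formula (Theorem \ref{t-neumanndecomp}) the space $\cD\ti{max}/\cD\ti{min}$ has dimension $2m$, and by Definition \ref{d-min} a function lies in $\cD\ti{min}$ exactly when it is annihilated by the form against all of $\cD\ti{max}$; since the form satisfies $[g,f]\big|_a^b=-\overline{[f,g]\big|_a^b}$, its left and right radicals coincide and both equal $\cD\ti{min}$. Thus $[\fdot,\fdot]\big|_a^b$ descends to a \emph{nondegenerate} skew-Hermitian form on this $2m$-dimensional quotient. Evaluating it on the basis $\{\f_j\}\cup\{\overline{\f_j}\}$ supplied before Theorem \ref{t-gknmatrix}, and using ${\bf L}\ti{max}\f_j=i\f_j$, ${\bf L}\ti{max}\overline{\f_j}=-i\overline{\f_j}$ together with orthonormality, a short computation produces the block form $\mathrm{diag}(2i\,\mathbf{I},-2i\,\mathbf{I})$. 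This exhibits both the nondegeneracy and the signature $(m,m)$ of $\tfrac{1}{2i}[\fdot,\fdot]\big|_a^b$, which is precisely what guarantees the existence of $m$-dimensional isotropic (Lagrangian) subspaces.

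With this structure the two implications follow from standard facts about nondegenerate forms. For sufficiency, the hypotheses \eqref{e-gkn1b} and the linear independence modulo $\cD\ti{min}$ (Definition \ref{d-linind}) say that $W:=\spa\{w_1,\dots,w_m\}$ projects to an $m$-dimensional isotropic subspace of the quotient; the domain cut out by \eqref{e-gkn1a} is its annihilator $W^{\perp}$, which by nondegeneracy of the form has dimension $2m-m=m$. Since $W\subseteq W^{\perp}$ and both are $m$-dimensional, $W=W^{\perp}$, so $W^{\perp}$ is self-annihilating and the associated operator is self-adjoint. For necessity, a self-adjoint $\cD_{{\bf L}}$ projects to an $m$-dimensional Lagrangian subspace; choosing $w_1,\dots,w_m\in\cD_{{\bf L}}$ that form a basis of it gives functions satisfying \eqref{e-gkn1b} and independent modulo $\cD\ti{min}$, and the self-annihilating property forces $\cD_{{\bf L}}$ to equal the annihilator described by \eqref{e-gkn1a}, by the same dimension count.

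The main obstacle I expect is the middle step: rigorously establishing that the form descends to a nondegenerate form on the quotient with radical exactly $\cD\ti{min}$, and correctly handling the skew-Hermitian (rather than symmetric) structure in the double-annihilator argument. Everything downstream is a dimension count, but it relies on this nondegeneracy and on the equality $\dim(\cD\ti{max}/\cD\ti{min})=2m$ from Theorem \ref{t-neumanndecomp}; the explicit block evaluation in the $\{\f_j,\overline{\f_j}\}$ basis is the cleanest way to secure it and simultaneously links the conditions \eqref{e-gkn1a}--\eqref{e-gkn1b} to the unitary-matrix parametrization of Theorem \ref{t-gknmatrix}, since the vectors $\psi_j=\f_j+\sum_k u_{kj}\overline{\f_k}$ span an isotropic subspace precisely when $u$ is unitary.
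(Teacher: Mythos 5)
Your argument is essentially correct, but there is nothing in the paper to compare it against: the paper states GKN1 as a quoted background result from \cite[Theorem 18.1.4]{N} and supplies no proof of its own. Measured against Naimark's proof (the one the paper implicitly relies on), your route is genuinely different. Naimark derives GKN1 from the unitary-matrix parametrization of Theorem \ref{t-gknmatrix}: for a self-adjoint extension with matrix $u$ he takes $w_k=\psi_k=\f_k+\sum_j u_{jk}\overline{\f_j}$ and verifies by direct computation that these satisfy \eqref{e-gkn1b}, are linearly independent modulo $\cD\ti{min}$, and that \eqref{e-gkn1a} cuts out exactly $\cD_{{\bf L}}$. You instead identify $\cD_{{\bf L}^*}$ with the annihilator of $\cD_{{\bf L}}$ under the boundary form, pass to the $2m$-dimensional quotient $\cD\ti{max}/\cD\ti{min}$, and reduce both GKN1 and its converse (Theorem \ref{t-gkn2}) to the classification of self-annihilating subspaces of a nondegenerate skew-Hermitian form as the $m$-dimensional isotropic ones. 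The computations you flag as the crux do go through: $[g,f]\big|_a^b=-\overline{[f,g]\big|_a^b}$ holds because the coefficients are real, the Gram matrix in the basis $\{\f_j\}\cup\{\overline{\f_j}\}$ is $\mathrm{diag}(2i I_m,-2i I_m)$ (the off-diagonal blocks $[\f_j,\overline{\f_k}]\big|_a^b$ vanish identically, with no need for $\cD_+\perp\cD_-$ in $L^2$), and the double-annihilator dimension count is valid for nondegenerate sesquilinear forms. One point worth making explicit: the nondegeneracy of the quotient form is immediate for you only because the paper \emph{defines} $\cD\ti{min}$ (Definition \ref{d-min}) as the radical of the boundary form; in Naimark's treatment the minimal domain is defined as a closure and its coincidence with that radical is itself a theorem, so your proof silently outsources this step to the paper's conventions rather than proving it. What your approach buys is a uniform, coordinate-free treatment of both GKN theorems that matches the boundary-triple formalism the paper uses later; what Naimark's buys is the explicit bijection between self-adjoint extensions and unitary matrices, which your closing observation (the span of the $\psi_j$ is isotropic precisely when $u^*u=I$) recovers as a corollary.
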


The requirements in equation \eqref{e-gkn1b} are commonly referred to as {\bf Glazman symmetry conditions}. The converse of the GKN1 Theorem is also true.

\begin{theo}[GKN2,~{\cite[Theorem 18.1.4]{N}}]\label{t-gkn2}
Assume we are given arbitrary functions $w_1,w_2,\dots,w_m\in\cD\ti{max}$ which are linearly independent modulo $\cD\ti{min}$ and which satisfy the relations \eqref{e-gkn1b}. Then the set of all functions $f\in\cD\ti{max}$ which satisfy the conditions \eqref{e-gkn1a} is domain of a self-adjoint extension of ${\bf L}\ti{min}$.
\end{theo}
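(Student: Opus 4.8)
The plan is to recast self-adjointness as a statement about a nondegenerate boundary form on a finite-dimensional quotient, where it reduces to the linear algebra of maximal isotropic (Lagrangian) subspaces. First I would record the identity underlying Green's formula \eqref{e-greens}: because $\ell$ has real coefficients, for $f,g\in\cD\ti{max}$ one has
\[ [f,g]\bigg|_a^b=\langle\ell[f],g\rangle-\langle f,\ell[g]\rangle, \]
and a one-line computation yields the skew-Hermitian symmetry $[f,g]|_a^b=-\overline{[g,f]|_a^b}$. By Definition \ref{d-min} the form vanishes as soon as either argument lies in $\cD\ti{min}$, so it descends to a sesquilinear form $\langle\cdot,\cdot\rangle_V$ on the quotient $V:=\cD\ti{max}/\cD\ti{min}$; I write $\hat h$ for the image of $h\in\cD\ti{max}$.

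Two structural facts make the argument run. By von Neumann's formula (Theorem \ref{t-neumanndecomp}) and the deficiency indices $(m,m)$ one has $\dim V=2m$, and $\langle\cdot,\cdot\rangle_V$ is nondegenerate: if $\hat g$ lies in its radical then $[g,f]|_a^b=0$ for every $f\in\cD\ti{max}$, which is precisely the condition $g\in\cD\ti{min}$ of Definition \ref{d-min}, forcing $\hat g=0$. The hypotheses on $w_1,\dots,w_m$ then say exactly that $W:=\spa\{\hat w_1,\dots,\hat w_m\}$ is an $m$-dimensional subspace of $V$ (linear independence modulo $\cD\ti{min}$, Definition \ref{d-linind}) on which $\langle\cdot,\cdot\rangle_V$ vanishes by \eqref{e-gkn1b}, i.e.~$W$ is isotropic. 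Since the form is nondegenerate on the $2m$-dimensional space $V$, its orthogonal complement has $\dim W^\perp=2m-m=m$, while isotropy gives $W\subseteq W^\perp$; hence $W=W^\perp$ is Lagrangian.

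It remains to match the candidate domain $\cD:=\{f\in\cD\ti{max}:[f,w_k]|_a^b=0,\ k=1,\dots,m\}$ to these spaces. Its defining conditions descend to $\hat\cD:=\cD/\cD\ti{min}=W^\perp=W$, and $\cD\ti{min}\subseteq\cD$ since those conditions hold automatically on $\cD\ti{min}$. For $f,g\in\cD$ we get $\hat f,\hat g\in W$, so $[f,g]|_a^b=\langle\hat f,\hat g\rangle_V=0$ by isotropy; via the fundamental identity this is the symmetry of ${\bf L}:=\{\ell,\cD\}$. For self-adjointness, the inclusion ${\bf L}\ti{min}\subseteq{\bf L}$ gives ${\bf L}^*\subseteq({\bf L}\ti{min})^*={\bf L}\ti{max}$, so a function $g\in\cD\ti{max}$ lies in $\cD({\bf L}^*)$ exactly when $[f,g]|_a^b=0$ for all $f\in\cD$; using the skew-Hermitian symmetry this reads $\hat g\in\hat\cD^\perp=W^\perp=W=\hat\cD$. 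Thus $\cD({\bf L}^*)$ and $\cD$ share the same image in $V$ and both contain $\cD\ti{min}$, whence $\cD({\bf L}^*)=\cD$ and ${\bf L}={\bf L}^*$.

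The genuinely substantive ingredients---as opposed to formal bookkeeping---are the nondegeneracy of $\langle\cdot,\cdot\rangle_V$ and the dimension count $\dim V=2m$, since together they force the isotropic subspace $W$ to be maximal; notably, both come cheaply from the cited results (the definition of $\cD\ti{min}$ and Theorem \ref{t-neumanndecomp}), so GKN2 is, once organized this way, essentially linear algebra on $V$. I expect the care to be concentrated in two spots: checking that the boundary form descends in \emph{both} arguments, which is exactly where the skew-Hermitian symmetry enters, and the final step, where equality of quotients $\cD({\bf L}^*)/\cD\ti{min}=\cD/\cD\ti{min}$ must be promoted to equality of the domains themselves using the common inclusion of $\cD\ti{min}$.
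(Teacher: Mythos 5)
Your proof is correct, but it is worth noting that the paper itself does not prove this statement: it quotes GKN2 from Naimark \cite[Theorem 18.1.4]{N}, whose argument runs through von Neumann's extension theory --- the unitary parametrization of self-adjoint extensions recorded here as Theorem \ref{t-gknmatrix}, i.e.\ domains built from $\psi_j=\f_j+\sum_k u_{kj}\overline{\f_k}$ via the Cayley-transform picture --- and then matches the boundary conditions $[f,w_k]\big|_a^b=0$ against that parametrization. You bypass that machinery entirely and work with the boundary form on the quotient $V=\cD\ti{max}/\cD\ti{min}$, reducing the theorem to the statement that an $m$-dimensional isotropic subspace of a nondegenerate ($2m$-dimensional) space is Lagrangian and that Lagrangian planes correspond to self-adjoint restrictions. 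All the steps check out: the form descends in both arguments via the skew-Hermitian identity, nondegeneracy is literally Definition \ref{d-min}, $\dim V=2m$ is Theorem \ref{t-neumanndecomp}, and the identification $\cD({\bf L}^*)=\cD$ is handled correctly, including the promotion from equality of quotients to equality of domains. The only point you leave tacit is that $\cD$ is dense (it contains $\cD\ti{min}$), which is needed for ${\bf L}^*$ to exist at all; this is immediate but deserves a clause. As for what each route buys: Naimark's proof yields, along the way, the explicit one-to-one correspondence between extensions and unitary matrices (useful for counting and classifying all extensions), whereas your quotient/Lagrangian argument is shorter, makes the self-adjointness mechanism transparent, and is in fact the linear-algebraic core of the abstract Green identity \eqref{e-btgreens} underlying the boundary-triple formalism that the rest of this paper uses --- so your proof is arguably more aligned with the paper's own toolkit than the classical one it cites.
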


These two theorems completely answer the question of how boundary conditions can be used to create self-adjoint extensions. Applications of this theory hinge on determining the proper $w_k$'s that will define the domain of a desired self-adjoint extension.

\subsection{Boundary Triples}\label{ss-bt}

The main tool used for calculating the Weyl $m$-function of examples will be boundary triples. Most of the material from this subsection is taken from an excellent book of Jussi Behrndt, Seppo Hassi, and Henk de Snoo \cite{BdS}, which should be consulted for more details. In particular, boundary triples are usually formulated not only for operators but for more general linear relations. 

\begin{defn}{\cite{BdS}}
Let $\fh$ and $\fK$ be Hilbert spaces over $\CC$. A linear subspace of $\fh\times\fK$ is called a {\bf linear relation} $H$ from $\fh$ to $\fK$ and the elements $\widehat{h}\in H$ will in general be written as pairs $\{h,h'\}$ with components $h\in\fh$ and $h'\in\fK$. If $\fh=\fK$ then we will just say $H$ is a linear relation in $\fh$. 
\end{defn}

Linear relations will play a large role in the determination of self-adjoint extensions in Section \ref{s-mfunctions}.

\begin{defn}{\cite[Definition 2.1.1]{BdS}}\label{d-bt}
Let $S$ be a closed symmetric relation in a Hilbert space $\fh$. Then $\{\cG,\Gamma_0,\Gamma_1\}$ is a \textbf{boundary triple} for $S^*$ if $\cG$ is a Hilbert space and $\Gamma_0,\Gamma_1:S^*\to\cG$ are linear mappings such that the mapping $\Gamma:S^*\to\cG\times\cG$ defined by
\begin{align*}
    \Gamma\widehat{f}=\{\Gamma_0\widehat{f},\Gamma_1\widehat{f}\}, ~~~~ \widehat{f}=\{f,f'\}\in S^*,
\end{align*}
is surjective and the identity 
\begin{align}\label{e-btgreens}
    \langle f',g\rangle_{\fh}-\langle f,g'\rangle_{\fh}=\langle\Gamma_1\widehat{f},\Gamma_0\widehat{g}\rangle_{\cG}-\langle\Gamma_0\widehat{f},\Gamma_1\widehat{g}\rangle_{\cG}
\end{align}
holds for all $\widehat{f}=\{f,f'\},\widehat{g}=\{g,g'\}\in S^*$.
\end{defn}

Notice that when $S$ is a Sturm--Liouville differential operator the left-hand side of equation \eqref{e-btgreens} is just the sesquilinear form given in equation \eqref{e-greens}.

The eigenspace of closed symmetric relation $S$ at $\la\in\CC$ will be written as
\begin{align*}
    \mathfrak{N}_{\la}(S^*)=\ker(S^*-\la) ~~\text{ and }~~ \widehat{\mathfrak{N}}_{\la}(S^*)=\left\{\{f_\la,\la f_{\la}\}~:~f_{\la}\in\mathfrak{N}_{\la}(S^*)\right\}.
\end{align*}
Let $\pi_1$ denote the orthogonal projection from $\fh\times\fh$ onto $\fh\times\{0\}$. Then $\pi_1$ maps $\widehat{\mathfrak{N}}_{\la}(S^*)$ bijectively onto $\mathfrak{N}_{\la}(S^*)$.

\begin{defn}{\cite[Definition 2.3.1]{BdS}}
Let $S$ be a closed symmetric relation in a complex Hilbert space $\fh$, let $\{\cG,\Gamma_0,\Gamma_1\}$ be a boundary triple for $S^*$, and let $A_0=\ker \Gamma_0$. Then
\begin{align*}
    \rho(A_0)\ni\la\mapsto \gamma(\la)=\left\{\{\Gamma_0\widehat{f}_{\la},f_{\la}\}~:~\widehat{f}_{\la}\in\widehat{\mathfrak{N}}_{\la}(S^*)\right\},
\end{align*}
or, equivalently,
\begin{align*}
    \rho(A_0)\ni\la\mapsto \gamma(\la)=\pi_1\left(\Gamma_0\upharpoonright\widehat{\mathfrak{N}}_{\la}(S^*)\right)^{-1},
\end{align*}
is called the \textbf{$\gamma$-field} associated with the boundary triple $\{\cG,\Gamma_0,\Gamma_1\}$.
\end{defn}

The structure of boundary triples allows for the classical Weyl $m$-function to be obtained via a simple formula.

\begin{defn}{\cite[Definition 2.3.4]{BdS}}\label{d-mfunction}
Let $S$ be a closed symmetric relation in a complex Hilbert space $\fh$, let $\{\cG,\Gamma_0,\Gamma_1\}$ be a boundary triple for $S^*$, and let $A_0=\ker \Gamma_0$. Then
\begin{align*}
    \rho(A_0)\ni\la\mapsto M(\la)=\left\{\{\Gamma_0\widehat{f}_{\la},\Gamma_1\widehat{f}_{\la}\}~:~\widehat{f}_{\la}\in\widehat{\mathfrak{N}}_{\la}(S^*)\right\},
\end{align*}
or, equivalently,
\begin{align*}
    \rho(A_0)\ni\la\mapsto M(\la)=\Gamma_1\left(\Gamma_0\upharpoonright\widehat{\mathfrak{N}}_{\la}(S^*)\right)^{-1},
\end{align*}
is called the \textbf{Weyl $m$-function} associated with the boundary triple $\{\cG,\Gamma_0,\Gamma_1\}$.
\end{defn}

A closed symmetric relation $S$ in $\fh$ with equal defect indices (analogous to Definition \ref{d-defect}) will admit self-adjoint extensions in $\fh$, each of which will give rise to a boundary triple for $S^*$ via \cite[Theorem 2.4.1]{BdS}. Hence, boundary triples for $S^*$ are not usually unique. Transforming boundary triples therefore plays an important role in obtaining desired sets of boundary conditions. Define the unitary and self-adjoint operator 
\begin{align*}
    \cJ_{\fh}:=\left(\begin{array}{cc}
    0 & -i\cI_{\fh} \\
    -i\cI_{\fh} & 0 
    \end{array}\right),
\end{align*}
on the product space $\fh\times\fh$, where $\cI_{\fh}$ denotes the identity operator in $\cH$. 

\begin{theo}{\cite[Theorem 2.5.1]{BdS}}\label{t-weyltransform}
Let $S$ be a closed symmetric relation in $\fh$, assume that $\{\cG,\Gamma_0,\Gamma_1\}$ is a boundary triple for $S^*$, and let $\cG'$ be a Hilbert space. Then the following statements hold:
\begin{enumerate}
    \item[(i)] Let $\cW$ be a bounded linear operator from $\cG\times\cG$ to $\cG'\times\cG'$ such that
    \begin{align}\label{e-wrelations}
        \cW^*\cJ\ci{\cG'}\cW=\cJ\ci{\cG} \hspace{.5cm}\text{ and }\hspace{.5cm} \cW\cJ\ci{\cG}\cW^*=\cJ\ci{\cG'},
    \end{align}
    and define
    \begin{align}\label{e-wbtrelation}
        \left( \begin{array}{c}
\Gamma_0' \\
\Gamma_1'
\end{array} \right)
=\cW
    \left( \begin{array}{c}
\Gamma_0 \\
\Gamma_1
\end{array} \right)
= \left( \begin{array}{cc}
W_{11} & W_{12} \\
W_{21} & W_{22} 
\end{array} \right)
\left( \begin{array}{c}
\Gamma_0 \\
\Gamma_1
\end{array} \right).
    \end{align}
    Then $\{\cG',\Gamma_0',\Gamma_1'\}$ is a boundary triple for $S^*$.
    \item[(ii)] Let $\{\cG',\Gamma_0',\Gamma_1'\}$ be a boundary triple for $S^*$. Then there exists a unique bounded linear operator $\cW$ from $\cG\times\cG$ to $\cG'\times\cG'$ satisfying equation \eqref{e-wrelations} such that equation \eqref{e-wbtrelation} holds.
\end{enumerate}
\end{theo}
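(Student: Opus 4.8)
The plan is to rewrite the boundary-triple axiom \eqref{e-btgreens} as a single $\cJ$-sesquilinear identity and then transport it through $\cW$. Writing $\Gamma\widehat f=\{\Gamma_0\widehat f,\Gamma_1\widehat f\}$ as an element of $\cG\times\cG$, a short computation with the definition of $\cJ\ci{\cG}$ shows that the right-hand side of \eqref{e-btgreens} equals $i\langle \cJ\ci{\cG}\Gamma\widehat f,\Gamma\widehat g\rangle_{\cG\times\cG}$ (the precise unimodular constant playing no role below), so that the boundary-triple identity becomes
\[
\langle f',g\rangle_{\fh}-\langle f,g'\rangle_{\fh}=i\langle \cJ\ci{\cG}\Gamma\widehat f,\Gamma\widehat g\rangle_{\cG\times\cG},\qquad \widehat f,\widehat g\in S^*.
\]
Two preliminary observations are worth recording. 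First, because $\cJ\ci{\cG}$ and $\cJ\ci{\cG'}$ are unitary with $\cJ^2=\cI$, the two relations in \eqref{e-wrelations} make $\cW$ a bounded bijection with bounded inverse $\cW^{-1}=\cJ\ci{\cG}\cW^*\cJ\ci{\cG'}$. Second, for any boundary triple one has $\ker\Gamma=S$: if $\widehat f\in S=(S^*)^*$ then the left-hand side above vanishes for every $\widehat g\in S^*$, and surjectivity of $\Gamma$ forces $\cJ\ci{\cG}\Gamma\widehat f\perp\cG\times\cG$, hence $\Gamma\widehat f=0$; the reverse inclusion is immediate from the same identity.

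For part (i), I would simply insert $\Gamma'=\cW\Gamma$ into the reformulated identity and use $\cW^*\cJ\ci{\cG'}\cW=\cJ\ci{\cG}$:
\[
i\langle \cJ\ci{\cG'}\Gamma'\widehat f,\Gamma'\widehat g\rangle=i\langle \cW^*\cJ\ci{\cG'}\cW\,\Gamma\widehat f,\Gamma\widehat g\rangle=i\langle \cJ\ci{\cG}\Gamma\widehat f,\Gamma\widehat g\rangle=\langle f',g\rangle_{\fh}-\langle f,g'\rangle_{\fh}.
\]
Thus $\{\cG',\Gamma_0',\Gamma_1'\}$ satisfies the Green identity, and since $\Gamma$ maps onto $\cG\times\cG$ and $\cW$ is a bijection, $\Gamma'=\cW\Gamma$ maps onto $\cG'\times\cG'$. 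Hence $\{\cG',\Gamma_0',\Gamma_1'\}$ is a boundary triple for $S^*$.

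For part (ii), I would build $\cW$ directly from the boundary maps. Surjectivity of $\Gamma$ lets me pick, for each $(\xi,\eta)\in\cG\times\cG$, some $\widehat f\in S^*$ with $\Gamma\widehat f=(\xi,\eta)$, and set $\cW(\xi,\eta):=\Gamma'\widehat f$. This is well defined because both triples share the kernel $S=\ker\Gamma=\ker\Gamma'$, so the value is independent of the chosen representative; the same fact together with surjectivity of $\Gamma'$ makes $\cW$ a linear bijection. Boundedness follows from the open mapping theorem applied to the bounded bijection $S^*/S\to\cG\times\cG$ induced by $\Gamma$ (here one uses that the boundary maps are continuous in the graph norm on $S^*$, a consequence of the closed graph theorem), composed with the bounded map induced by $\Gamma'$. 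The two operator relations are then read off by equating the Green identities of the two triples: for all $\widehat f,\widehat g\in S^*$,
\[
\langle \cJ\ci{\cG}\Gamma\widehat f,\Gamma\widehat g\rangle=\langle \cJ\ci{\cG'}\cW\Gamma\widehat f,\cW\Gamma\widehat g\rangle=\langle \cW^*\cJ\ci{\cG'}\cW\,\Gamma\widehat f,\Gamma\widehat g\rangle,
\]
and surjectivity of $\Gamma$ yields $\cW^*\cJ\ci{\cG'}\cW=\cJ\ci{\cG}$; inverting this (using $\cJ^2=\cI$ and bijectivity of $\cW$) gives $\cW\cJ\ci{\cG}\cW^*=\cJ\ci{\cG'}$. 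Uniqueness is clear, since $\cW_1\Gamma=\cW_2\Gamma$ on $S^*$ forces $\cW_1=\cW_2$ on $\Ran\Gamma=\cG\times\cG$.

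The step I expect to be the real obstacle is the functional-analytic part of (ii): establishing that the induced $\cW$ is bounded rather than merely linear, which rests on continuity of the boundary maps with respect to the graph norm and on the identity $\ker\Gamma=S$. Once Green's identity is in its $\cJ$-form, all the algebraic manipulations with $\cW$ and $\cJ$ are routine.
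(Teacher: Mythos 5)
The paper itself contains no proof of this statement: Theorem \ref{t-weyltransform} is imported verbatim from \cite[Theorem 2.5.1]{BdS} as background material, so the only benchmark is the argument in that reference --- and your proof is correct and is, in substance, that argument. Recasting the abstract Green identity \eqref{e-btgreens} as $\langle f',g\rangle_{\fh}-\langle f,g'\rangle_{\fh}=i\langle \cJ\ci{\cG}\Gamma\widehat f,\Gamma\widehat g\rangle_{\cG\times\cG}$ and pushing it through $\cW$ settles (i), since surjectivity of $\Gamma'=\cW\Gamma$ follows from surjectivity of $\Gamma$ and invertibility of $\cW$ (with $\cW^{-1}=\cJ\ci{\cG}\cW^*\cJ\ci{\cG'}$). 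For (ii), your construction of $\cW$ by choice of representatives on $\Ran\Gamma=\cG\times\cG$, with well-definedness and bijectivity resting on $\ker\Gamma=S=\ker\Gamma'$, boundedness obtained from graph-norm continuity of the boundary maps together with the open mapping theorem applied to the induced bijections $S^*/S\to\cG\times\cG$ and $S^*/S\to\cG'\times\cG'$, and the relations \eqref{e-wrelations} read off by equating the two Green identities, is exactly the mechanism used in \cite{BdS}, where $\ker\Gamma=S$ and the continuity of $\Gamma$ appear as preliminary results on boundary triples. One caveat deserves mention: your $\cJ$-reformulation is valid for the fundamental symmetry $\cJ\ci{\cG}=\left(\begin{smallmatrix}0 & -i\cI\\ i\cI & 0\end{smallmatrix}\right)$ of \cite{BdS}, whereas the matrix displayed in the paper, with $-i\cI$ in \emph{both} off-diagonal corners, is a transcription error (it is not self-adjoint, contrary to the surrounding text); with that matrix one gets $\langle\cJ\ci{\cG}\Gamma\widehat f,\Gamma\widehat g\rangle=-i\bigl(\langle\Gamma_1\widehat f,\Gamma_0\widehat g\rangle+\langle\Gamma_0\widehat f,\Gamma_1\widehat g\rangle\bigr)$, a \emph{sum} that no unimodular constant converts into the Green difference, so your hedge about ``the precise unimodular constant'' would not cover it. Since your argument implicitly uses the correct $\cJ$, this is a defect of the paper's statement rather than a gap in your proof.
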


Let $\{\CC^{2n},\Gamma_0,\Gamma_1\}$ be a boundary triple for an operator ${\bf A}$ and the deficiency indices of the associated minimal domain $\cD\ti{min}$ be $(2n,2n)$. Then self-adjoint extensions $A_{\te}\subset\cD\ti{max}$ are in one-to-one correspondence with the self-adjoint relations $\theta\in\CC^{2n}$ via
\begin{align*}
    \dom {\bf A}_{\te}=\left\{f\in\cD\ti{max}~:~\{\Gamma_0,\Gamma_1\}\in\te\right\}.
\end{align*}
Hence, assume that $\te$ is a self-adjoint relation in $\CC^{2n}$. According to \cite[Corollary 1.10.9]{BdS}, the relation $\te$ can be represented with $2n\times 2n$ matrices $\cA$ and $\cB$ satisfying the conditions $\cA^*\cB=\cB^*\cA$, $\cA\cB^*=\cB\cA^*$ and $\cA\cA^*+\cB\cB^*=I=\cA^*\cA+\cB^*\cB$ such that 
\begin{align*}
    \te=\left\{\{\cA\f,\cB\f\} ~:~ \f\in\CC^{2n}\right\}=\left\{\{\psi,\psi'\} ~:~ \cA^*\psi'=\cB^*\psi\right\}.
\end{align*}
In that case, one has
\begin{align}\label{e-thetadomain1}
\dom {\bf A}_{\te}=\left\{ f\in\cD\ti{max} ~:~ \cA^*\Gamma_1(f)=\cB^*\Gamma_0(f)\right\}.
\end{align}
Theorem 2.6.1 and Corollary 2.6.3 from \cite{BdS} then say that for $\la\in\rho({\bf A}_{\te})\cap\rho({\bf A}_0)$ the Krein formula for the corresponding resolvents are given by
\begin{equation}\label{e-thetaresolvent}
\begin{aligned}
(A_{\te}-\la)^{-1}&=(A_0-\la)^{-1}+\gamma(\la)(\te-M(\la))^{-1}\gamma(\overline{\la})^* \\
&=(A_0-\la)^{-1}+\gamma(\la)\cA(\cB-M(\la)\cA)^{-1}\gamma(\overline{\la})^*.
\end{aligned}
\end{equation}
In the case of the examples in this manuscript, the spectrum of ${\bf A}_0$ is discrete and the difference of the resolvents of ${\bf A}_0$ and ${\bf A}_{\te}$ is an operator of rank $\leq$ $2n$. Thus, the spectrum of the self-adjoint operator ${\bf A}_{\te}$ is also discrete. Indeed, $\la\in\rho({\bf A}_0)$ is an eigenvalue of ${\bf A}_{\te}$ if and only if $\ker (\te-M(\la))$, or equivalently, $\ker(\cB-M(\la)\cA)$ is nontrivial, and that 
\begin{align*}
    \ker({\bf A}_{\te}-\la)=\gamma(\la)\ker(\te-M(\la))=\gamma(\la)\cA\ker(\cB-M(\la)\cA).
\end{align*}
In the special case that the self-adjoint relation $\te$ in $\CC^{2n}$ is a $2n\times 2n$ matrix, the boundary condition for the domain of ${\bf A}_{\te}$ can be written as
\begin{align}\label{e-thetadomain2}
    \dom {\bf A}_{\te}=\left\{ f\in\cD\ti{max} ~:~ \te\Gamma_0(f)=\Gamma_1(f)\right\}.
\end{align}
The spectral properties of the operator ${\bf A}_{\te}$ can also be described with the help of the function 
\begin{align}\label{e-thetam}
    \la\mapsto(\te-M(\la))^{-1};
\end{align}
the poles of the matrix function \eqref{e-thetam} coincide with the discrete spectrum of ${\bf A}_{\te}$ and the dimension of the eigenspace $\ker({\bf A}_{\te}-\la)$ coincides with the dimension of the range of the residue of the function \eqref{e-thetam} at $\la$. 

We now let ${\bf L}$ be a Sturm--Liouville operator defined on a subset $(a,b)$ of $\RR\cup\{\pm\infty\}$ with associated maximal domain $\cD\ti{max}$ and sesquilinear form $[\cdot,\cdot]_{\bf L}$. The boundary triples for ${\bf L}$ will be formed with quasi-derivatives.

\begin{defn}\label{d-quasideriv}
Let $u$ and $v$ be linearly independent real solutions of the equation $({\bf L}-\lambda_0)y=0$ for some $\la_0\in\RR$ and assume that the solutions are normalized by $[u,v]_{{\bf L}}=1$. Let $f$ be a complex function on $(a,b)$ for which $f,pf'\in AC(a,b)$. Then the {\bf quasi-derivatives} of $f$, induced by the normalized solutions $u$ and $v$, are defined as complex functions on $(a,b)$ given by
\begin{align*}
f^{[0]}:=[f,v]_{{\bf L}} \text{ and } f^{[1]}:=-[f,u]_{{\bf L}}.
\end{align*}
\end{defn}

Fix a fundamental system $(u_1(\cdot,\la);~u_2(\cdot,\la))$ for the equation $({\bf L}-\la)f=0$ by the initial conditions
\begin{align}
    \left( \begin{array}{cc}
u_1^{[0]}(a,\la) &  u_2^{[0]}(a,\la) \\
u_1^{[1]}(a,\la) & u_2^{[1]}(a,\la)
\end{array} \right)
=
    \left( \begin{array}{cc}
1 & 0 \\
0 & 1
\end{array} \right).
\end{align}
Recall that for all $f\in\cD\ti{max}$ the quasi-derivatives $f^{[0]}(a)$, $f^{[1]}(a)$, $f^{[0]}(b)$ and $f^{[1]}(b)$ are well-defined due to Theorem \ref{t-limits}.

\begin{prop}{\cite[Proposition 6.3.8]{BdS}}\label{t-btlcgeneral}
Assume that the endpoints $a$ and $b$ are in the limit-circle case. Then $\{\CC^2,\Gamma_0,\Gamma_1\}$, where
\begin{align}
    \Gamma_0f:=\left(\begin{array}{c}
f^{[0]}(a)  \\
f^{[0]}(b)
\end{array} \right), \hspace{.5cm}
    \Gamma_1f:=\left( \begin{array}{c}
f^{[1]}(a)   \\
-f^{[1]}(b) 
\end{array} \right), \hspace{.5cm}
f\in\dom T\ti{max},
\end{align}
is a boundary triple for $\cD\ti{max}$. The self-adjoint extension ${\bf L}_0$ corresponding to $\Gamma_0$ is the restriction of $\cD\ti{max}$ defined on 
\begin{align*}
    \dom {\bf L}_0=\{f\in\dom \cD\ti{max} ~:~ f^{[0]}(a)=f^{[0]}(b)=0 \},
\end{align*}
and $u_2^{[0]}(b,\lambda)\neq 0$ for all $\lambda\in\rho({\bf L}_0)$. Moreover, the corresponding $\gamma$-field and Weyl function are given by 
\begin{align*}
    \gamma(\lambda)=(\begin{array}{cc}
u_1(\cdot,\lambda) & u_2(\cdot,\lambda)
\end{array})
\dfrac{1}{u_2^{[0]}(b,\lambda)}
\left( \begin{array}{cc}
u_2^{[0]}(b,\lambda) & 0 \\
-u_1^{[0]}(b,\lambda) & 1
\end{array} \right),
\end{align*}
and
\begin{align*}
    M(\lambda)=\dfrac{1}{u_2^{[0]}(b,\lambda)}
    \left( \begin{array}{cc}
-u_1^{[0]}(b,\lambda) & 1 \\
1 & -u_2^{[1]}(b,\lambda)
\end{array} \right),
\end{align*}
for $\lambda\in\rho({\bf L}_0)$.
\end{prop}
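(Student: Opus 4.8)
The plan is to check the two defining conditions of a boundary triple (Definition \ref{d-bt}): surjectivity of $\Gamma=(\Gamma_0,\Gamma_1)$ and the abstract Green identity \eqref{e-btgreens}, and then to compute $\gamma(\la)$ and $M(\la)$ directly from their defining formulas by inverting $\Gamma_0$ on the deficiency space. Everything rests on a pointwise Lagrange-type identity for the bracket. At each $x$ the coordinate change $f\mapsto\big(f^{[0]}(x),f^{[1]}(x)\big)$ is a linear map whose matrix has determinant $[u,v]_{{\bf L}}=1$, by the normalization in Definition \ref{d-quasideriv} together with the values $u^{[0]}=1,\ u^{[1]}=0,\ v^{[0]}=0,\ v^{[1]}=1$; hence it preserves the underlying symplectic form, which yields
\[
  [f,g]_{{\bf L}}(x)=f^{[0]}(x)\,\overline{g^{[1]}(x)}-f^{[1]}(x)\,\overline{g^{[0]}(x)},
\]
together with the conjugate-free analogue obtained when $g$ is itself a solution of $({\bf L}-\la)y=0$.

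First I would verify \eqref{e-btgreens}. By \eqref{e-greens} its left-hand side equals $[f,g]_{{\bf L}}\big|_a^b=[f,g]_{{\bf L}}(b)-[f,g]_{{\bf L}}(a)$; substituting the displayed identity and expanding the right-hand side $\langle\Gamma_1\widehat f,\Gamma_0\widehat g\rangle_{\CC^2}-\langle\Gamma_0\widehat f,\Gamma_1\widehat g\rangle_{\CC^2}$ with the stated $\Gamma_0,\Gamma_1$ shows the two agree term by term; the minus sign on $-f^{[1]}(b)$ is exactly what reconciles the contributions at $b$. For surjectivity I would show $\ker\Gamma=\cD\ti{min}$. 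The inclusion $\ker\Gamma\subseteq\cD\ti{min}$ is immediate from the identity above: if the four boundary quasi-derivatives of $f$ vanish, then $[f,g]_{{\bf L}}\big|_a^b=0$ for all $g\in\cD\ti{max}$, so $f\in\cD\ti{min}$ by Definition \ref{d-min}; the reverse inclusion is the standard vanishing of boundary values on $\cD\ti{min}$. Since both endpoints are limit-circle, Theorems \ref{t-endpointnumbers} and \ref{t-neumanndecomp} give $\dim(\cD\ti{max}/\cD\ti{min})=m_++m_-=4=\dim(\CC^2\times\CC^2)$, so the injection induced by $\Gamma$ on the quotient is onto. This proves $\{\CC^2,\Gamma_0,\Gamma_1\}$ is a boundary triple, whence $A_0=\ker\Gamma_0={\bf L}_0$ is self-adjoint with the stated domain by the general theory in \cite{BdS}.

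For the $\gamma$-field and $M(\la)$ I would use that $\mathfrak N_\la=\ker({\bf L}\ti{max}-\la)$ is spanned by $u_1(\cdot,\la),u_2(\cdot,\la)$, both of which lie in $L^2$ by the limit-circle hypothesis. Evaluating $\Gamma_0$ and $\Gamma_1$ on this basis and using the initial conditions at $a$ gives $\Gamma_0$ the matrix $\left(\begin{smallmatrix}1&0\\ u_1^{[0]}(b)&u_2^{[0]}(b)\end{smallmatrix}\right)$ and $\Gamma_1$ the matrix $\left(\begin{smallmatrix}0&1\\ -u_1^{[1]}(b)&-u_2^{[1]}(b)\end{smallmatrix}\right)$ relative to $\{u_1,u_2\}$. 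The abstract correspondence $\la\in\rho(A_0)\iff\Gamma_0\upharpoonright\widehat{\mathfrak N}_\la$ bijective forces $u_2^{[0]}(b,\la)\neq0$ for $\la\in\rho({\bf L}_0)$, since the determinant of the first matrix is $u_2^{[0]}(b)$; inverting that matrix and composing with $\pi_1$ (i.e.\ with the row $(u_1\ u_2)$) produces the claimed $\gamma(\la)$, and left-multiplying the inverse by the $\Gamma_1$-matrix produces $M(\la)$.

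The one step that is not pure bookkeeping is the lower-left entry of $M(\la)$: the matrix product delivers $u_1^{[0]}(b)u_2^{[1]}(b)-u_1^{[1]}(b)u_2^{[0]}(b)$ in that slot, which must equal $1$. This is exactly the conjugate-free Lagrange identity $[u_1,u_2]_{{\bf L}}(x)=u_1^{[0]}(x)u_2^{[1]}(x)-u_1^{[1]}(x)u_2^{[0]}(x)$ evaluated at $b$, combined with the constancy of the Wronskian of two solutions of $({\bf L}-\la)y=0$, whose constant value is read off at $a$ from the initial conditions as $1\cdot1-0\cdot0=1$. I expect the main obstacle to be packaging the symplectic/Lagrange identity so that both the Green-identity check and this Wronskian normalization follow from it uniformly; once it is in hand, the rest reduces to $2\times2$ linear algebra and the cited bijectivity criterion for $\Gamma_0\upharpoonright\widehat{\mathfrak N}_\la$.
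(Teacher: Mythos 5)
The paper does not prove this proposition at all: it is imported verbatim as \cite[Proposition 6.3.8]{BdS}, so there is no in-paper argument to compare against. Your proof is correct and is essentially the standard argument behind the cited result: the Pl\"ucker-type identity $[f,g](x)=f^{[0]}(x)\overline{g^{[1]}(x)}-f^{[1]}(x)\overline{g^{[0]}(x)}$ gives the abstract Green identity and $\ker\Gamma=\cD\ti{min}$, the dimension count $\dim(\cD\ti{max}/\cD\ti{min})=4$ from Theorems \ref{t-endpointnumbers} and \ref{t-neumanndecomp} gives surjectivity, and evaluating $\Gamma_0,\Gamma_1$ on the basis $\{u_1,u_2\}$ of $\mathfrak{N}_\la$ and inverting the resulting $2\times2$ matrices yields $\gamma(\la)$ and $M(\la)$, with the lower-left entry of $M(\la)$ reduced to $1$ by the constancy of the (conjugate-free) Wronskian $u_1^{[0]}u_2^{[1]}-u_1^{[1]}u_2^{[0]}\equiv1$, exactly as you say. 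The only cosmetic imprecision is the claim that the coordinate change $(f,pf')\mapsto(f^{[0]},f^{[1]})$ has determinant $+1$; with the natural conventions its determinant is $-[u,v]=-1$, which is compensated by the sign built into the boundary form in the original coordinates, and the displayed identity you actually use is correct.
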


The Proposition will be used as a shortcut in the analysis of the uncomposed Jacobi operator in Section \ref{s-jacobi}. The solutions $u$ and $v$ from Definition \ref{d-quasideriv} fall into two disjoint categories.

\begin{defn}{\cite[Definition 6.10.3]{BdS}}\label{d-principal}
Let $({\bf L}-\lambda_0)f=0$ with $\lambda_0\in\RR$ be non-oscillatory at the endpoint $a$ and let $u$ and $v$ be real solutions of $({\bf L}-\lambda_0)f=0$. Then $u$ is said to be {\bf principal} at $a$ if $1/pu^2$ is not integrable at $a$ and $v$ is said to be {\bf non-principal} at $a$ if $1/pv^2$ is integrable at $a$.
\end{defn}

However, in Subsection \ref{ss-otherexamples} and Section \ref{s-powers} we will not rely on solutions to generate the quasi-derivatives from Definition \ref{d-quasideriv}.

\section{The Jacobi Differential Operator}\label{s-jacobi}

Let $0<\al,\beta<1$, and consider the classical Jacobi differential expression given by
\begin{align}\label{e-jacobi}
\ell_{\al,\beta}[f](x)=-\dfrac{1}{(1-x)^{\al}(1+x)^{\beta}}[(1-x)^{\al+1}(1+x)^{\beta+1}f'(x)]'
\end{align}
on the maximal domain
\begin{align*}
\cD^{(\al,\beta)}\ti{max}=\{ f\in L^2_{\al,\beta}(-1,1) ~|~ f,f'\in AC\ti{loc};\ell_{\al,\beta}[f]\in L^2_{\al,\beta}(-1,1)\},
\end{align*}
where the Hilbert space $L^2_{\al,\beta}(-1,1):=L^2\left[(-1,1),(1-x)^{\al}(1+x)^{\beta}\right]$. This maximal domain defines the associated minimal domain given in Definition \ref{d-min}, and the defect indices are $(2,2)$. The specified values of $\al,\beta$ will ensure that the differential expression is in the limit-circle non-oscillating case at both endpoints, and so are assumed throughout. If either parameter is equal to or larger than 1, then all of our conclusions still hold, but some boundary conditions will be satisfied trivially. If either are less than 0, the corresponding endpoint is regular and although it still requires a boundary condition, these are much simpler and don't need the machinery used here. The case where $\al=\beta=0$ describes the Legendre differential equation, and will be discussed briefly in Subsection \ref{ss-otherexamples}.

The Jacobi polynomials $P_m^{(\al,\beta)}(x)$, $m\in\NN_0$, form a complete orthogonal set in $L^2_{\al,\beta}(-1,1)$ for which $f(x)=P_m^{(\al,\beta)}(x)$ solves the eigenvalue equation of the symmetric expression given in equation \eqref{e-jacobi}, that is:
\begin{align}\label{e-jacobieigen}
\ell_{\al,\beta}[f](x)=m(m+\al+\beta+1)f(x)
\end{align}
for each $m$. The Jacobi polynomials can be represented via a Rodrigues' formula as
\begin{align*}
    P_m^{(\al,\beta)}(x)=\dfrac{(-1)^n}{2^nn!}(1-x)^{-\al}(1+x)^{-\beta}\dfrac{\dd^n}{\dd x^n}\left\{(1-x)^{\al}(1+x)^{\beta}(1-x^2)^n\right\},
\end{align*}
and $P_m^{(\al,\beta)}\in\cD\ti{max}^{(\al,\beta)}$.

The associated sesquilinear form is defined, for $f,g\in\cD\ti{max}^{(\al,\beta)}$, via equation \eqref{e-greens}. Integration by parts easily yields the explicit expression
\begin{align*}
[f,g]_1(\pm 1):=\lim_{x\to\pm 1^{\mp}}(1-x)^{\al+1}(1+x)^{\beta+1}[g'(x)f(x)-f'(x)g(x)].
\end{align*}
Note that the dependence of the sesquilinear form on the parameters $\al$ and $\beta$ is suppressed in the definition for the sake of notation. Theorem \ref{t-limits} also says that the sesquilinear form is both well-defined and finite for all $f,g\in\cD\ti{max}^{(\al,\beta)}$. 

Both solutions to equation \eqref{e-jacobi} are easily given in terms of hypergeometric functions. In order to take advantage of this fact, we begin by transforming the expression into the hypergeometric differential equation
\begin{align}\label{e-hyper1}
    z(1-z)f''(z)+[c-(a+b+1)z]f'(z)-abf(z)=0 \text{ for }z\in\CC,
\end{align}
where, in general $a,b,c\in\CC$. The equation has three regular singular points: $z=0,1,\infty$ \cite[Section 15.10]{DLMF}. This description is powerful because any second-order differential equation with three regular singular points can be converted to the hypergeometric equation by a change of variables. Indeed, the hypergeometric equation is a special case of Riemann's differential equation, which also has three regular singular points.

Hence, rewrite equation \eqref{e-jacobi} and make the change of variables $x=1-2t$ and $t=(1-x)/2$ so that it is equal to
\begin{align}\label{e-fulljacobi}
    \ell_{\al,\beta}[f](t)=t(1-t)f''(t)+[(\al+1)-(\al+\beta+2)t]f'(t)=-\la f(t),
\end{align}
with the spectral parameter $\la\in\CC$. For simplicity, write $\la=\mu(\mu+\al+\beta+1)$ to help denote the natural eigenfunctions. This leads to the connecting formulas between equations \eqref{e-hyper1} and \eqref{e-fulljacobi}:
\begin{align}\label{e-hyperpara}
c=\al+1, \hspace{3em} a+b=\al+\beta+1, \hspace{3em} ab=-\lambda.
\end{align}
There are then two choices for $a$ and $b$, but the slots are interchangeable so we choose $a=-\mu$ and $b=\mu+\al+\beta+1$. See \cite[Section 9]{E} and \cite[Chapter IV, Section 4.18]{Titch} for more. Lastly, we assume throughout our discussion that $a-b$ is not equal to an integer.

Define the Gauss hypergeometric series (or function) as
\begin{align}\label{d-hyperseries}
F(a,b;c;z):=\,_2F_1(a,b;c;z)=F(b,a;c;z)=\sum_{n=0}^{\infty}\dfrac{(a)^{(n)}(b)^{(n)}}{(c)^{(n)}}\dfrac{z^n}{n!},
\end{align}
and $(x)^n=x(x+1)(x+2)\cdots(x+n-1)$ denotes the Pochhammer function, or rising factorial, on the disk $|z|<1$. This notation is usually written as a subscript in the hypergeometric community, but we use the stated form now due to additional uses in other contexts. There is conditional convergence for $|z|=1$, except z=1, due to the fact that $c-a-b=-\beta\in(-1,0]$. We also point out that 
\begin{align}\label{e-hyperat0}
    F(a,b;c;0)=1 \text{ for all }a,b,c\in\CC.
\end{align}
Finally, recall two formulas for derivatives of hypergeometric functions that will be useful:
\begin{equation}\label{e-hyperderiv}
\begin{aligned}
    \frac{d^n}{dz^n}F(a,b;c;z)&=\frac{(a)^{(n)}(b)^{(n)}}{(c)^{(n)}}F(a+n,b+n;c+n;z), \\
    \frac{d^n}{dz^n}[z^{c-1}F(a,b;c;z)]&=(c-n)^{(n)} z^{c-n-1}F(a,b;c-n;z). 
\end{aligned}
\end{equation}
The hypergeometric function has been extensively studied due to a wide range of applications, see \cite[Section 15.2]{DLMF} for basic properties and further references. 

The two linearly independent solutions of the differential equation \eqref{e-fulljacobi} in neighborhoods of $t=0$ and $t=1$ (respectively $x=1$ and $x=-1$) are known \cite[Equations 15.10.2-4]{DLMF} to be
\begin{equation}\label{e-solutions1}
\begin{aligned}
w_1(t,\la)&:=
\left.\begin{cases}
F(-\mu,\mu+\al+\beta+1;\al+1;t) & \text{near }t=0 \\
-F(-\mu,\mu+\al+\beta+1;\beta+1;1-t) & \text{near }t=1
\end{cases}\right\}, \\
w_2(t,\la)&:=\left.\begin{cases}
\dfrac{t^{-\al}}{\al 2^{\al+\beta+1}}F(-\mu-\al,\mu+\beta+1;1-\al;t) & \text{near }t=0 \\
\dfrac{(1-t)^{-\beta}}{\beta 2^{\al+\beta+1}}F(-\mu-\beta,\mu+\al+1;1-\beta;1-t) & \text{near }t=1
\end{cases}\right\},
\end{aligned}
\end{equation}
where the constants will help with normalization later and the dependence on the choice of $\la$ stems from equation \eqref{e-hyperpara}.

However, in order to generate operations it is convenient to take a fixed $\la=\la_0=0$. This choice immediately yields that $a=0=\mu$ and $b=\al+\beta+1$ in equation \eqref{e-hyperpara}. Thus, define
\begin{equation}\label{e-particsolution}
\begin{aligned}
\widetilde{w}_1(t)&:=
\left.\begin{cases}
F(0,\al+\beta+1;\al+1;t) & \text{near }t=0 \\
-F(0,\al+\beta+1;\beta+1;1-t) & \text{near }t=1
\end{cases}\right\}=\left.\begin{cases}
1 & \text{near }t=0 \\
-1 & \text{near }t=1
\end{cases}\right\}, \\
\widetilde{w}_2(t)&:=\left.\begin{cases}
\dfrac{t^{-\al}}{\al 2^{\al+\beta+1}}F(-\al,\beta+1;1-\al;t) & \text{near }t=0 \\
\dfrac{(1-t)^{-\beta}}{\beta 2^{\al+\beta+1}}F(-\beta,\al+1;1-\beta;1-t) & \text{near }t=1
\end{cases}\right\},
\end{aligned}
\end{equation}

The operations, after changing the variable back to $x$, are then generated via these particular solutions:
\begin{align}\label{e-defineoperations}
     f^{[0]}(x):=[f,\widetilde{w}_2]_1(x)  \text{ and } f^{[1]}(x):=[f,\widetilde{w}_1]_1(x).
\end{align}
Indeed, the operations differ by a minus sign from those in Definition \ref{d-quasideriv} but the properties in equations \eqref{e-wronskians1} and \eqref{e-hyperinitial} are sufficient to utilize Proposition \ref{t-btlcgeneral} still.

Equations \eqref{e-hyperat0} and \eqref{e-hyperderiv} can be used to simplify
\begin{equation}\label{e-operations1}
\begin{aligned}
    f^{[0]}(-1)&=[f,\widetilde{w}_2]_1(-1)=\lim_{x\to -1^+}-f(x)-\frac{(1+x)f'(x)}{\beta}, \\
    f^{[0]}(1)&=[f,\widetilde{w}_2]_1(1)=\lim_{x\to 1^-}f(x)-\frac{(1-x)f'(x)}{\al}, \\
    f^{[1]}(-1)&=[f,\widetilde{w}_1]_1(1)=[f,-1]_1(-1)=\lim_{x\to -1^+}-(1-x)^{\al+1}(1+x)^{\beta+1}f'(x), \\
    f^{[1]}(1)&=[f,\widetilde{w}_{1(1)}]_1=[f,1]_1(1)=\lim_{x\to 1^-}(1-x)^{\al+1}(1+x)^{\beta+1}f'(x).
\end{aligned}
\end{equation}
All of the previous limits are guaranteed to exist and be finite by Theorem \ref{t-limits}. Notice that the chosen constants \cite[Equations 15.10.3-5]{DLMF} have the effect of making
\begin{align}\label{e-wronskians1}
    [\widetilde{w}_1,\widetilde{w}_2]_1(-1)=[\widetilde{w}_1,\widetilde{w}_2]_1(1)=[w_1,w_2]_1(-1)=[w_1,w_2]_1(1)=1.
\end{align}

This process easily verifies that the fundamental system of solutions $(w_1(x,\la),w_2(x,\la))$ for the equation $(\ell-\la)f=0$ given in \eqref{e-solutions1} satisfies the initial conditions
\begin{align}\label{e-hyperinitial}
    \left( \begin{array}{cc}
w_1^{[0]}(-1,\la) &  w_2^{[0]}(-1,\la) \\
w_1^{[1]}(-1,\la) & w_2^{[1]}(-1,\la)
\end{array} \right)
=
    \left( \begin{array}{cc}
1 & 0 \\
0 & 1
\end{array} \right).
\end{align}
Furthermore, use the above operations to define
\begin{align}
    \Gamma_0f:=\left(\begin{array}{c}
f^{[0]}(-1)  \\
f^{[0]}(1)
\end{array} \right), \hspace{.5cm}
    \Gamma_1f:=\left( \begin{array}{c}
f^{[1]}(-1)   \\
-f^{[1]}(1) 
\end{array} \right), \hspace{.5cm}
f\in\dom T\ti{max}.
\end{align}

Proposition \ref{t-btlcgeneral} can be invoked to conclude $\{\CC^2,\Gamma_0,\Gamma_1\}$ is a boundary triple for $\cD^{(\al,\beta)}\ti{max}$. Additionally, the self-adjoint extension ${\bf A}_0$ corresponding to $\Gamma_0$ is the restriction of $\cD^{(\al,\beta)}\ti{max}$ to 
\begin{align*}
    \dom {\bf A}_0=\{f\in\dom \cD^{(\al,\beta)}\ti{max} ~:~ f^{[0]}(-1)=f^{[0]}(1)=0 \},
\end{align*}
and $w_2^{[0]}(1,\lambda)\neq 0$ for all $\lambda\in\rho({\bf A}_0)$. The corresponding $\gamma$-field and Weyl function are given by 
\begin{align*}
    \gamma(\lambda)=(\begin{array}{cc}
w_1(\cdot,\lambda) & w_2(\cdot,\lambda)
\end{array})
\dfrac{1}{w_2^{[0]}(1,\lambda)}
\left( \begin{array}{cc}
w_2^{[0]}(1,\lambda) & 0 \\
-w_1^{[0]}(1,\lambda) & 1
\end{array} \right),
\end{align*}
and
\begin{align*}
    M(\lambda)=\dfrac{1}{w_2^{[0]}(1,\lambda)}
    \left( \begin{array}{cc}
-w_1^{[0]}(1,\lambda) & 1 \\
1 & -w_2^{[1]}(1,\lambda)
\end{array} \right),
\end{align*}
for $\lambda\in\rho({\bf A}_0)$. Concrete expressions for the $\gamma$-field and Weyl function can then be found by taking advantage of the initial conditions. It is thus necessary to state connection formulas between the endpoints for the hypergeometric function \cite[Equation 15.8.4]{DLMF}:
\begin{equation}\label{e-connection}
\begin{aligned}
    F(a,b;c;1-z)=&\frac{\pi}{\sin(\pi(c-a-b))\Gamma(c-a)\Gamma(c-b)}F(a,b;a+b-c+1;z) \\
    &-\frac{\pi z^{c-a-b}}{\sin(\pi(c-a-b))\Gamma(a)\Gamma(b)}F(c-a,c-b;c-a-b+1;z),
\end{aligned}
\end{equation}
which holds for $|ph(z)|<\pi$ and $|ph(1-z)|<\pi$, where $ph(z)$ denotes the principal value of $z$. Hence, the solution $w_1(x,\la)$ near the endpoint $x=1$ can be rewritten as
\begin{equation*}
\begin{aligned}
w_1(x,\la)&=F\left(-\mu,\mu+\al+\beta+1;\al+1;\frac{1-x}{2}\right) \\
&=\frac{\pi}{\sin(-\beta\pi)\Gamma(\mu+\al+1)\Gamma(-\mu-\beta)}F\left(-\mu,\mu+\al+\beta+1;\beta+1;\frac{1+x}{2}\right) \\
&-\frac{\pi 2^{\beta} (1+x)^{-\beta}}{\sin(-\beta\pi)\Gamma(-\mu)\Gamma(\mu+\al+\beta+1)}F\left(-\mu-\beta,\mu+\al+1;1-\beta;\frac{1+x}{2}\right).
\end{aligned}
\end{equation*}
Notice that now $w_1(1,\la)=c_1 w_1(-1,\la)+c_2 w_2(-1,\la)$, where $c_1,c_2$ do not depend on $x$. Explicitly,
\begin{equation}\label{e-firstconstants}
\begin{aligned}
c_1=&\frac{-\pi}{\sin(-\beta\pi)\Gamma(\mu+\al+1)\Gamma(-\mu-\beta)}, \\
c_2=&\frac{-\pi\beta 2^{\al+\beta+1}}{\sin(-\beta\pi)\Gamma(-\mu)\Gamma(\mu+\al+\beta+1)}.
\end{aligned}
\end{equation}

The solution $w_2(x,\la)$ near the endpoint $x=1$ can also be rewritten via equation \eqref{e-connection} with the additional help of \cite[Equation 15.8.1]{DLMF} as
\begin{equation*}
\begin{aligned}
w_2(x,\la)&=\dfrac{2^{\al}(1-x)^{-\al}}{\al 2^{\al+\beta+1}}F\left(-\mu-\al,\mu+\beta+1;1-\al;\frac{1-x}{2}\right)\\
&=\frac{\pi}{\al 2^{\al+\beta+1}\sin(-\beta\pi)\Gamma(1+\mu)\Gamma(-\mu-\al-\beta)}F\left(-\mu,\mu+\al+\beta+1;\beta+1;\frac{1+x}{2}\right) \\
&-\frac{\pi 2^{\beta} (1+x)^{-\beta}}{\al 2^{\al+\beta+1}\sin(-\beta\pi)\Gamma(-\mu-\al)\Gamma(\mu+\beta+1)}F\left(-\mu-\beta,\mu+\al+1;1-\beta;\frac{1+x}{2}\right).
\end{aligned}
\end{equation*}
It is now possible to write $w_1(1,\la)=c_3 w_1(-1,\la)+c_4 w_2(-1,\la)$, where $c_3,c_4$ do not depend on $x$. In particular,
\begin{equation}\label{e-secondconstants}
\begin{aligned}
c_3=&\frac{-\pi}{\al 2^{\al+\beta+1}\sin(\al\pi)\Gamma(1+\mu)\Gamma(-\mu-\al-\beta)} \\
c_4=&\frac{-\beta\pi}{\al\sin(\al\pi)\Gamma(-\mu-\al)\Gamma(\mu+\beta+1)}.
\end{aligned}
\end{equation}
These calculations combined with the initial conditions in equation \eqref{e-hyperinitial} allow operations to be easily performed on the solutions. For instance, we have
\begin{equation}
\begin{aligned}
\begin{array}{c c}
w_1^{[0]}(1,\la)=c_1, & w_2^{[0]}(1,\la)=c_3, \\
w_1^{[1]}(1,\la)=c_2, & w_2^{[1]}(1,\la)=c_4. 
\end{array}
\end{aligned}
\end{equation}
Therefore, the self-adjoint extension ${\bf A}_0$ with domain
\begin{align*}
    \dom {\bf A}_0=\{f\in\dom \cD^{(\al,\beta)}\ti{max} ~:~ f^{[0]}(-1)=f^{[0]}(1)=0 \},
\end{align*}
thus has $\gamma$-field and Weyl function, respectively,
\begin{align*}
    \gamma(\lambda)=(\begin{array}{cc}
w_1(\cdot,\lambda) & w_2(\cdot,\lambda)
\end{array})
\dfrac{1}{c_3}
\left( \begin{array}{cc}
c_3 & 0 \\
-c_1 & 1
\end{array} \right)
\end{align*}
and
\begin{align*}
    M(\lambda)=\dfrac{1}{c_3}
    \left( \begin{array}{cc}
-c_1 & 1 \\
1 & -c_4
\end{array} \right),
\end{align*} 
for $\lambda\in\rho({\bf A}_0)$. Notice that the function $1/\Gamma(z)$ is analytic for all $z\in\CC$. However, $\Gamma(z)$ has simple poles when $z$ is a non-positive integer, and is analytic otherwise. The spectrum of ${\bf A}_0$ can thus be extracted from the $1/c_3$ term in the Weyl function. In particular, if $1+\mu=-m$, for $m\in\NN_0$, then $\mu=-m-1$. Alternatively, if $-\mu-\al-\beta=-m$, then $\mu=m-\al-\beta$ and $a=\al+\beta-m$. In either case, we have 
\begin{align*}
    \la=\mu(\mu+\al+\beta+1)=(-m-1)(-m+\al+\beta)=(m+1)(m-\al-\beta),
\end{align*}
for $m\in\NN_0$. The lowest eigenvalue of the self-adjoint extension is therefore $-\al-\beta$, keeping in mind that $\al,\beta\in(0,1)$. 

The domain of ${\bf A}_0$ is just the kernel of the operation $\Gamma_0$, and the imposed conditions represent the limit-circle analog of Dirichlet boundary conditions. We can interchange our definitions of $\Gamma_0$ and $\Gamma_1$ to yield another important self-adjoint extension, call it ${\bf A}_1$, with domain that is the kernel of $\Gamma_1$. This extension has boundary conditions that are the analog of von Neumann conditions. The boundary triple is still clearly well-defined, and solutions and operations in the $\gamma$-field and Weyl function are all switched. Respectively, these are now given by
\begin{align*}
    \gamma(\lambda)=(\begin{array}{cc}
w_1(\cdot,\lambda) & w_2(\cdot,\lambda)
\end{array})
\dfrac{1}{c_2}
\left( \begin{array}{cc}
c_2 & 0 \\
-c_4 & 1
\end{array} \right)
\end{align*}
and
\begin{align*}
    M(\lambda)=\dfrac{1}{c_2}
    \left( \begin{array}{cc}
-c_4 & 1 \\
1 & -c_1
\end{array} \right)
\end{align*} 
for $\lambda\in\rho({\bf A}_1)$. Spectral information about ${\bf A}_1$ can be extracted from the $1/c_2$ term and there are poles when $\mu=m$ and $\mu=m+\al+\beta+1$ for $m\in\NN_0$. Hence, simple eigenvalues of ${\bf A}_1$ occur at
\begin{align*}
    \la=m(m+\al+\beta+1) \text{ for } m\in\NN_0.
\end{align*}
The lowest eigenvalue of ${\bf A}_1$ is $0$. Corollary 6.11.9(iii) of \cite{BdS} identifies ${\bf A}_1$ as the important Friedrichs extension of the minimal operator, and gives a few other descriptions. This set of eigenvalues is completely generated by the Jacobi polynomials, see equation \eqref{e-jacobieigen}, and hence ${\bf A}_1$ is the classical self-adjoint extension that is commonly used in the literature. 

A discussion of more general self-adjoint extensions for the minimal domain $\cD\ti{min}^{(\al,\beta)}$ is avoided here for brevity. As a consolation, this matter will be discussed in the more difficult case of powers of the Jacobi differential operator in Subsection \ref{ss-otherbc}.

\subsection{Remarks on Generating Functions and Other Examples}\label{ss-otherexamples}

The operations defined in equation \eqref{e-operations1} were essential to building the boundary triple for the Jacobi differential operation and beg for a more nuanced analysis. Some other examples of differential equations are also briefly mentioned. 

Begin by defining a $C^2$ function, which is clearly in $\cD\ti{max}^{(\al,\beta)}$:
\begin{align}\label{e-vdefn}
    v(x):=\left.\begin{cases}
    \dfrac{(1+x)^{-\beta}}{\beta 2^{\al+1}} & \text{ for }x\text{ near }-1 \\
    \dfrac{(1-x)^{-\al}}{\al 2^{\beta+1}} & \text{ for }x\text{ near }1
    \end{cases}\right\},
\end{align}
so that another operation can be defined as $f^{*}(x):=[f,v](x)$. Observe that, upon inspection, 
\begin{align*}
    f^{*}(-1)=f^{[0]}(-1), ~~\text{ and }~~ f^{*}(1)=f^{[0]}(1),
\end{align*}
with the operation $f^{[0]}$ given by equation \eqref{e-operations1}. The function $v(x)$ therefore generates the same operations as $\widetilde{w}_2(x)$ at the endpoints and trivially defines the same boundary triple. This is surprising mostly because the function $v(x)$ is not a solution of 
\begin{align*}
(\ell_{\al,\beta}-\lambda_0)f=0
\end{align*}
for any $\lambda_0\in\RR$, as might be expected from Definition \ref{d-quasideriv}. Indeed, the behavior of the functions $\widetilde{w}_1(x)$ and $v(x)$ was essentially determined in \cite{FL}. There, the asymptotic behavior of functions in the maximal domain near the endpoints was described and the two options are indicated by these two functions. 

Alternatively, the function $v(x)$ can be thought of as the truncation of the power series solution $\widetilde{w}_2(x)$ to the first term, and then multiplied by some constant. Importantly, functions like $v(x)$ generating quasi-derivatives will offer much more flexibility when powers of the Jacobi operator are considered in Section \ref{s-powers}. Calculations involving the full solution $w_2(\la,x)$ in the simple sesquilinear form for the uncomposed Jacobi operator were feasible, but it quickly becomes too difficult to get explicit answers for powers.

While $v(x)$ is not a solution, this is offset by the fact that $v(x)$ does have the property that $1/pv^2$ is integrable at each endpoint; a key property in Definition \ref{d-principal} of non-principal solutions. Another important fact is still able to be derived due to this property.

\begin{theo}\label{t-dividev}
Let $f\in\cD\ti{max}^{(\al,\beta)}$. Then 
\begin{align*}
    &\lim_{x\to -1^+}\dfrac{f(x)}{v(x)}=\lim_{x\to -1^+}\beta 2^{\al+1}(1+x)^{\beta}f(x), \text{ and} \\
    &\lim_{x\to 1^-}\dfrac{f(x)}{v(x)}=\lim_{x\to 1^-}\al 2^{\beta+1}(1-x)^{\al}f(x)
\end{align*}
exist and are finite.
\end{theo}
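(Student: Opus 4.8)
The plan is to dispose of the two displayed equalities first and then reduce everything to a single limit computation. The equalities are immediate from the definition \eqref{e-vdefn} of $v$: near $-1$ one has $v(x)=(1+x)^{-\beta}/(\beta 2^{\al+1})$, whence $f(x)/v(x)=\beta 2^{\al+1}(1+x)^{\beta}f(x)$, and near $1$ one has $v(x)=(1-x)^{-\al}/(\al 2^{\beta+1})$, whence $f(x)/v(x)=\al 2^{\beta+1}(1-x)^{\al}f(x)$. Thus it suffices to prove that $h:=f/v$ has a finite limit at each endpoint; I carry out the argument at $-1$, the endpoint $1$ being entirely symmetric.

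The key step is to differentiate $h$ and recognize the pre-limit sesquilinear form. Writing $p(x)=(1-x)^{\al+1}(1+x)^{\beta+1}$ and using that $v$ is real and nonvanishing near $-1$, the quotient rule gives $h'=(f'v-fv')/v^2$; comparing with the Jacobi sesquilinear form, whose pre-limit expression is $[f,v]_1(x)=p(x)[v'(x)f(x)-f'(x)v(x)]$, yields the identity
$$
h'(x)=-\frac{[f,v]_1(x)}{p(x)\,v(x)^2}.
$$
Consequently, for fixed interior $c$ one has $h(x)=h(c)-\int_{c}^{x}\frac{[f,v]_1(s)}{p(s)v(s)^2}\,ds$, so the existence of a finite $\lim_{x\to -1^+}h(x)$ is equivalent to the convergence of this integral as $x\to -1^+$.

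Two facts close the argument. First, since $f,v\in\cD\ti{max}^{(\al,\beta)}$, Theorem \ref{t-limits} guarantees that $[f,v]_1(x)$ converges to the finite value $[f,v]_1(-1)$ as $x\to -1^+$, so it is bounded on a one-sided neighborhood of $-1$. Second, a direct computation gives $p(x)v(x)^2=(1-x)^{\al+1}(1+x)^{1-\beta}/(\beta^2 2^{2\al+2})$, so that $1/(pv^2)\sim \beta^2 2^{\al+1}(1+x)^{\beta-1}$ as $x\to -1^+$; since $\beta>0$, the exponent $\beta-1$ exceeds $-1$ and this function is integrable near $-1$. This is exactly the non-principal property of $v$ recorded in Definition \ref{d-principal} and emphasized before the statement. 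The integrand is therefore the product of a bounded factor and a factor integrable near $-1$, hence absolutely integrable there, and the limit exists and is finite by the Cauchy criterion. At $1$ the computation is identical after exchanging $\al\leftrightarrow\beta$ and $(1+x)\leftrightarrow(1-x)$, and integrability there uses $\al>0$.

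The essentially only obstacle is the integrability of $1/(pv^2)$ at the endpoints, which is where the standing hypotheses $\al,\beta\in(0,1)$ and the non-principal character of $v$ are genuinely used; everything else is the quotient-derivative identity together with the finiteness of the sesquilinear form furnished by Theorem \ref{t-limits}.
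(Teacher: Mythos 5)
Your proposal is correct, but it takes a genuinely different route from the paper. The paper's own proof is purely citation-based: it notes that the statement is the analog of \cite[Theorem 6.10.9 (i)]{BdS} for a function $v$ that is \emph{not} a solution, and asserts that a slight modification of \cite[Lemma 6.11.3]{BdS} together with \cite[Lemma 6.10.1]{BdS} shows the hypotheses of that theorem still hold. You instead give a self-contained elementary argument: the quotient-rule identity $(f/v)'=-[f,v]_1(x)/(p\,v^2)$, boundedness of the pre-limit form $[f,v]_1(x)$ near the endpoint (via Theorem \ref{t-limits} and continuity), and integrability of $1/(pv^2)$ there (the non-principal property of Definition \ref{d-principal}, which is where $\al,\beta\in(0,1)$ genuinely enters), so that the integral representation of $f/v$ converges absolutely. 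Your computations check out: with the paper's Jacobi convention $[f,v]_1(x)=p(x)[v'f-f'v]$ the sign in your identity is right, and $p v^2=(1-x)^{\al+1}(1+x)^{1-\beta}/(\beta^2 2^{2\al+2})$ near $-1$ is correct. In substance this is the classical argument underlying the cited result of \cite{BdS}, so the two proofs share the same mathematical core; what your version buys is transparency and completeness: it makes explicit that the solution property of $v$ is never used --- only $v\in\cD\ti{max}^{(\al,\beta)}$, realness, nonvanishing near the endpoints, and integrability of $1/(pv^2)$ --- which is precisely the point the paper wants to emphasize for the later higher-order constructions, and it spares the reader from verifying the unspecified ``slight modification'' of the cited lemmas. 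The paper's version, in exchange, is shorter and situates the result within the general framework of \cite{BdS}.
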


\begin{proof}
This is simply the analog of \cite[Theorem 6.10.9 (i)]{BdS} when $v$ is not a solution. A slight modification to \cite[Lemma 6.11.3]{BdS} and an application of \cite[Lemma 6.10.1]{BdS} shows that the the hypotheses of \cite[Theorem 6.10.9 (i)]{BdS} still hold.
\end{proof}

Although this could have been concluded by analyzing $\widetilde{w}_2(x)$, not having to use solutions here gives hope for a higher power analog with other quasi-derivatives.
This final characterization of $f/v$ allows for a new set of boundary conditions in a boundary triple.

\begin{theo}
Let $f\in\cD\ti{max}^{(\al,\beta)}$. Then $\{\CC^2,\Gamma_0,\Gamma_1\}$, where
    \begin{align}
    \Gamma_0f:=\left(\begin{array}{c}
f^{*}(-1)   \\
f^{*}(1)
\end{array} \right), \hspace{.5cm}
    \Gamma_1f:=\left( \begin{array}{c}
\lim_{x\to -1^+}\frac{f(x)}{v(x)}  \\
\lim_{x\to 1^-}-\frac{f(x)}{v(x)}
\end{array} \right), \hspace{.5cm}
f\in\cD\ti{max}^{(\al,\beta)},
\end{align}
is a boundary triple for $\cD\ti{max}^{(\al,\beta)}$. Explicitly, we have
\begin{align*}
\Gamma_0f:=\left(\begin{array}{c}
\lim_{x\to -1^+}-f(x)-\dfrac{(1+x)f'(x)}{\beta}  \\
\lim_{x\to 1^-}f(x)-\dfrac{(1-x)f'(x)}{\al}
\end{array} \right), \hspace{.3cm}
    \Gamma_1f:=\left( \begin{array}{c}
\lim_{x\to -1^+}\beta 2^{\al+1}(1+x)^{\beta}f(x)   \\
\lim_{x\to 1^-}-\al 2^{\beta+1}(1-x)^{\al}f(x)
\end{array} \right).
\end{align*}
\end{theo}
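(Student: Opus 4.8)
The plan is to avoid checking the boundary-triple axioms from scratch and instead to identify the proposed maps with the boundary triple already obtained through Proposition~\ref{t-btlcgeneral}. That triple, built from the quasi-derivatives $f^{[0]},f^{[1]}$ of \eqref{e-operations1}, is known to be a boundary triple for $\cD\ti{max}^{(\al,\beta)}$; so if I can show the maps defined here coincide with those operations, then surjectivity of $\Gamma=(\Gamma_0,\Gamma_1)$ and the abstract Green identity \eqref{e-btgreens} are inherited at once and nothing else needs to be verified.

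For $\Gamma_0$ there is nothing to do: it was already observed that $f^{*}(\pm1)=f^{[0]}(\pm1)$, so this $\Gamma_0$ is literally the first map of the earlier triple and is finite by Theorem~\ref{t-limits}. The content is therefore entirely in $\Gamma_1$. Well-definedness of its entries---existence and finiteness of $\lim_{x\to\pm1}f(x)/v(x)$---is exactly Theorem~\ref{t-dividev}. The one real step is to prove, for every $f\in\cD\ti{max}^{(\al,\beta)}$, that
\[
\lim_{x\to-1^+}\frac{f(x)}{v(x)}=f^{[1]}(-1)\qquad\text{and}\qquad\lim_{x\to1^-}\frac{f(x)}{v(x)}=f^{[1]}(1);
\]
then $-\lim_{x\to1^-}f/v=-f^{[1]}(1)$ matches the second entry of the earlier $\Gamma_1$ and the two maps agree verbatim.

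To establish this identity I would compare the explicit limit from Theorem~\ref{t-dividev}, namely $\beta 2^{\al+1}(1+x)^{\beta}f(x)$ near $-1$, with the explicit form $f^{[1]}(-1)=\lim_{x\to-1^+}-(1-x)^{\al+1}(1+x)^{\beta+1}f'(x)$ from \eqref{e-operations1}. Decomposing a maximal-domain function near the endpoint into its non-principal part (the $v$-like term) and its principal part (the bounded $\widetilde{w}_1$-like term), the principal part contributes nothing to $\lim f/v$, since a bounded quantity over the blowing-up $v$ tends to $0$, and it contributes nothing to the pairing $[\,\cdot\,,\widetilde{w}_1]$ either; the non-principal part then yields the same coefficient in both expressions once the normalizations in \eqref{e-wronskians1} are tracked. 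This is the analogue, for the truncated non-solution $v$, of the classical relation between $\lim f/v$ and the Wronskian pairing against the principal solution, and it is legitimate because $1/pv^2$ is integrable at each endpoint, as noted before Theorem~\ref{t-dividev}; the endpoint $+1$ is identical with $\al,\beta$ interchanged.

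I expect this single identity to be the main obstacle: because $v$ is not a solution of $(\ell_{\al,\beta}-\la_0)f=0$, the solution-based statement cannot be quoted directly and one must rerun the short asymptotic/integration-by-parts argument (equivalently, apply the modified lemmas already invoked in the proof of Theorem~\ref{t-dividev}). Once it is in hand, $\Gamma_0$ and $\Gamma_1$ coincide with the operations of the triple from Proposition~\ref{t-btlcgeneral}, so that proposition gives immediately that $\{\CC^2,\Gamma_0,\Gamma_1\}$ is a boundary triple, and the explicit forms in the statement follow by substituting $f^{[0]}$ from \eqref{e-operations1} and $\lim f/v$ from Theorem~\ref{t-dividev}. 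As a sign-robust alternative I would phrase the passage from $(f^{[0]},f^{[1]})$ to $(f^{*},\lim f/v)$ as a transformation $\cW$ as in Theorem~\ref{t-weyltransform}(i), exhibiting $\cW$ through \eqref{e-wbtrelation} and verifying \eqref{e-wrelations}; consistency then forces $\cW$ to be essentially the identity, which merely re-encodes the coincidence of operations.
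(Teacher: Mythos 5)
Your proposal is correct in substance but proves the theorem by a genuinely different route than the paper. The paper's own proof (left mostly ``omitted for brevity'') is a \emph{direct} verification of Definition~\ref{d-bt}: one writes out the identity $[f,g]\big|_{-1}^{1}=\langle\Gamma_1f,\Gamma_0g\rangle-\langle\Gamma_0f,\Gamma_1g\rangle$ from the explicit formulas for the maps, and establishes surjectivity of $(\Gamma_0,\Gamma_1)$ by plugging constant multiples of $1$, $(1-x)^{-\al}$ and $(1+x)^{-\beta}$ into them. You instead identify the proposed maps with the triple already produced by Proposition~\ref{t-btlcgeneral}, so that the abstract Green identity and surjectivity are inherited rather than recomputed; the entire burden then falls on the single identity $\lim_{x\to\pm1^{\mp}}f(x)/v(x)=f^{[1]}(\pm1)$, the non-solution analog of the classical relation between $f/v$ and the Wronskian pairing against the principal solution. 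That identity is true, and with the formulas as written in \eqref{e-operations1} the normalization check via \eqref{e-wronskians1} comes out right at both endpoints, signs included. Your route also buys something the paper's computation does not make explicit: it shows the new triple \emph{is} the earlier one, so the $\gamma$-field, the Weyl function, and the extensions ${\bf A}_0=\ker\Gamma_0$, ${\bf A}_1=\ker\Gamma_1$ computed in Section~\ref{s-jacobi} carry over verbatim.

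One step needs tightening. Your decomposition of $f$ near an endpoint into ``non-principal part plus principal part'' omits the $\cD\ti{min}$-remainder: by von Neumann's formula and Corollary~\ref{c-defectspacesbasis} (with $n=1$), near $x=-1$ one has $f=f_0+c_1\f_1^{-}+c_2\psi_1^{-}$ with $f_0\in\cD\ti{min}$. For $f_0$ the pairing $[f_0,\widetilde{w}_1](-1)$ vanishes by definition of the minimal domain, but $\lim f_0/v=0$ does \emph{not} follow from the ``bounded quantity over blowing-up $v$'' argument, since $f_0$ is not a priori bounded; it needs the same BdS-type machinery you cite for Theorem~\ref{t-dividev}. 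A clean way to close this: since $\widetilde{w}_2(x)/v(x)=F\bigl(-\beta,\al+1;1-\beta;\tfrac{1+x}{2}\bigr)\to1$ as $x\to-1^{+}$ by \eqref{e-hyperat0}, one has $\lim f/v=\lim f/\widetilde{w}_2$ for every $f\in\cD\ti{max}^{(\al,\beta)}$, and the latter quotient involves a genuine non-principal solution, so the solution-based statement \cite[Theorem 6.10.9]{BdS} applies directly and yields both existence and the identification with $f^{[1]}(-1)$ on all of $\cD\ti{max}^{(\al,\beta)}$, minimal-domain part included; the endpoint $+1$ is identical with $\al$ and $\beta$ interchanged.
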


\begin{proof}
Omitted for brevity. It follows by writing out 
\begin{align*}
    [f,g]\big|_{-1}^1= \langle\Gamma_1f,\Gamma_0g\rangle - \langle\Gamma_0f,\Gamma_1g\rangle,
\end{align*}
and plugging constant multiples of $1$, $(1-x)^{-\al}$ and $(1+x)^{-\beta}$ into the maps to show surjectivity.
\end{proof}

Other differential operators that are bounded from below and in the limit-circle case at an endpoint can benefit from this analysis. A simple example is the classical Legendre differential expression, which is a special case of the Jacobi expression when $\al=\beta=0$, defined as 
\begin{align*}
\ell[f](x)=-((1-x^2)f'(x))',
\end{align*}
on the maximal domain
\begin{align*}
\cD\ti{max} &= \{f:(-1,1)\to \CC
\text{ }:\text{ } f, f'\in\text{AC}\ti{loc}(-1,1); f,\ell[f]\in L^2(-1,1)
\}.
\end{align*}
This maximal domain defines the associated minimal domain given in Definition \ref{d-min}, and the defect indices are $(2,2)$, with both endpoints in the limit-circle case. The Weyl $m$-function for this operator is obtained explicitly in the recent manuscript \cite{FLN} via a different method. The operator was also analyzed in \cite{FL} and the types of asymptotic behavior of functions in $\cD\ti{max}$ at the endpoints was described. Hence, the operations can again be generated by simple functions. Define a $C^2$ function:
\begin{align*}
    v(x):=\left.\begin{cases}
    \dfrac{\ln{(1+x)}}{2} & \text{ for }x\text{ near }-1 \\
    \dfrac{\ln{(1-x)}}{2} & \text{ for }x\text{ near }1
    \end{cases}\right\},
\end{align*}
so that another operation can be defined as $f^{*}(x):=[f,v](x)$. It can be shown that a boundary triple for $\cD\ti{max}$ can be defined using this operation and $f^{[1]}(x)$ from equation \eqref{e-operations1}. However, in this case the function $v(x)$ is essentially $Q_0(x)$, the first Legendre function of the second kind.

A second example is the classical Laguerre differential expression given by
\begin{align*}
    L_{\al}[f](x)=-\dfrac{1}{x^{\al}e^{-x}}\left[x^{\al+1}e^{-x}f'(x)\right]',
\end{align*}
on the maximal domain 
\begin{align*}
    \cD\ti{max}^{\al} &= \left\{f:(0,\infty)\to \CC
\text{ }:\text{ } f, f'\in\text{AC}\ti{loc}(0,\infty); f,\ell[f]\in L^2_{\al}(0,\infty)\right\}
\end{align*}
where $\al>-1$, $\al^2\neq 1/2$ and the Hilbert space $L^2_{\al}(0,\infty):=L^2\left[(0,\infty),x^{\al}e^{-x}\right]$. This maximal domain defines the associated minimal domain given in Definition \ref{d-min}, and the defect indices are $(1,1)$, with $0$ being in the limit-circle case and $\infty$ in the limit-point case. Again, an explicit Weyl $m$-function is given in \cite{FLN} and remarks are made about asymptotic behavior of functions in \cite[Remark 4.15]{FL}. The two $C^2$ functions
\begin{align*}
    u(x):=\left.\begin{cases}
    -1 & \text{ for }x\text{ near }0 \\
    0 & \text{ otherwise}
    \end{cases}\right\}, ~~
    v(x):=\left.\begin{cases}
    x^{-\al} & \text{ for }x\text{ near }0 \\
    0 & \text{ otherwise}
    \end{cases}\right\},
\end{align*}
can define operations $f^{[0]}(0)=[f,v](0)$ and $f^{[1]}(0)=[f,u](0)$ for $f\in\cD\ti{max}^{\al}$. It can be easily shown that such operations generate a boundary triple for $\cD\ti{max}^{\al}$ and the Weyl function then ascertained via an analog of Proposition \ref{t-btlcgeneral}.

\section{Powers of the Jacobi Differential Operator}\label{s-powers}

Let $0<\al,\beta<1$ and $n\in\NN$. It is known \cite{EKLWY} that the $n$th power of the Jacobi differential expression \eqref{e-jacobi}, defined as composing the expression with itself $n$ times, can be expressed in Lagrangian symmetric form as
\begin{align}\label{e-njacobi}
\ell_{{\bf J}}^n[f](x)=-\dfrac{1}{(1-x)^{\al}(1+x)^{\beta}}\sum_{k=1}^n (-1)^k[C(n,k,\al,\beta)(1-x)^{\al+k}(1+x)^{\beta+k}f^{(k)}(x)]^{(k)},
\end{align}
on the maximal domain
\begin{align*}
\cD^{{\bf J}, n}\ti{max}=\{ f\in L^2_{\al,\beta}(-1,1) ~|~ f,f',\dots,f^{(2n-1)}\in AC\ti{loc}(-1,1);\ell_{\al,\beta}^n[f]\in L^2_{\al,\beta}(-1,1)\},
\end{align*}
where the Hilbert space $L^2_{\al,\beta}(-1,1)=L^2\left[(-1,1),(1-x)^{\al}(1+x)^{\beta}\right]$. This maximal domain defines the associated minimal domain given in Definition \ref{d-min}, and the defect indices are $(2n,2n)$. To make the notation more accessible, we are suppressing the dependence on $\al$ and $\beta$ in the definition of $\cD^{{\bf J}, n}\ti{max}$, $\cD^{{\bf J}, n}\ti{min}$ and the defect spaces $\cD_+^{{\bf J}, n}$, $\cD_-^{{\bf J}, n}$, see equation \eqref{e-vN}. Explicit values for the constants $C(n,k,\al,\beta)$ can also be found in \cite{EKLWY}.

The associated sesquilinear form is defined, for $f,g\in\cD^{{\bf J},n}\ti{max}$, via equation \eqref{e-greens}. It can be written explicitly \cite[Section 6]{FFL} as
\begin{align}\label{e-njacobisesqui}
[f,g]_n(x):=\sum_{k=1}^n\sum_{j=1}^k(-1)^{k+j}\bigg\{\big[a_k(x)\overline{g}^{(k)}(x)\big]^{(k-j)}&f^{(j-1)}(x)-\\
&\big[a_k(x)f^{(k)}(x)\big]^{(k-j)}\overline{g}^{(j-1)}(x)\bigg\}\nonumber,
\end{align}
where $a_k(x)=(1-x)^{\al+k}(1+x)^{\beta+k}$. 
 
 We now recall a definition of general quasi-derivatives by Naimark \cite[Section 15.2]{N} that will serve as an abstraction of Definition \ref{d-quasideriv}. Within this section, we will use this more general definition and hope this will not cause any confusion.
 \begin{defn}\label{d-quasiderivgeneral}
Let the function $f$ be associated with the differential expression
 \begin{align*}
     \ell[f]=\sum_{k=0}^n(-1)^k\left[a_ky^{(k)}\right]^{(k)},
 \end{align*}
 where $a_0,\dots,a_n$ are real, sufficiently often differentiable coefficients. The quasi-derivatives of $f$ are defined by the formulas
 \begin{equation}\label{e-quasiderivgeneral}
     \begin{aligned}
     f^{[k]}&=f^{(k)}, \text{ for } k=1,\dots,(n-1); \\
     f^{[n]}&=a_nf^{(n)}, \\
     f^{[n+k]}&=a_{n-k}f^{(n-k)}-\left[f^{[n+k-1]}\right]', \text{ for } k=1,\dots,(n-1); \\
     f^{[2n-1]}&=a_1f'-\left[f^{[2n-2]}\right]'.
     \end{aligned}
 \end{equation}
 For convenience, we also write $f^{[0]}=f$. 
 \end{defn}
 Note that this definition immediately applies to the operator $\ell^n_{{\bf J}}$ given by equation \eqref{e-njacobi} with $a_k(x)=(1-x)^{\al+k}(1+x)^{\beta+k}$ for $k=1,\dots,n$. The main advantage of this general definition of quasi-derivatives is that they create the sesquilinear form a priori. For $f,g\in\cD\ti{max}^{{\bf J},n}$, we have 
 \begin{align}\label{e-sesquidecomp}
     \langle \ell^n_{{\bf J}}[f],g\rangle-\langle f,\ell^n_{{\bf J}}[g]\rangle=[f,g]_n=\sum_{k=1}^n\ \left\{f^{[k-1]}\overline{g}^{[2n-k]}-f^{[2n-k]}\overline{g}^{[k-1]}\right\}.
 \end{align}
Equation \eqref{e-sesquidecomp} will serve as a guide for building a boundary triple for the expression $\ell^n_{{\bf J}}$, and we turn our attention to creating well-defined operations which mimic these quasi-derivatives. This task has no obvious solution, as the lower quasi-derivatives ($k=1,\dots,n$) are clearly not well-defined for all $f\in\cD\ti{max}^{{\bf J},n}$. 

The manuscript \cite{FL} can be of some help here, and we begin by defining four classes of $C^{\infty}(-1,1)$ functions by their boundary asymptotics. Elements of the classes are denoted by $\psi_j^+$, $\psi_j^-$, $\f_j^+$ and $\f_j^-$ for $j\in\NN$:
\begin{equation}\label{e-firstkind}
\begin{aligned}
\f_j^+&:=\left.
\begin{cases}
(1-x)^{j-1}, & \text{ for }x \text{ near }1 \\
0, & \text{ for }x \text{ near }-1
\end{cases}
\right\}, \\
\f_j^-&:=\left.
\begin{cases}
0, & \text{ for }x \text{ near }1 \\
(1+x)^{j-1}, & \text{ for }x \text{ near }-1
\end{cases}
\right\},
\end{aligned}
\end{equation}
\begin{equation}\label{e-secondkinddefn}
\begin{aligned}
\psi_j^+(x)&:=\left.
\begin{cases}
(1-x)^{-\al+j-1}, & \text{ for }x \text{ near }1 \\
0, & \text{ for }x \text{ near }-1
\end{cases}
\right\}, \\
\psi_j^-(x)&:=\left.
\begin{cases}
0, & \text{ for }x \text{ near }1 \\
(1+x)^{-\beta+j-1}, & \text{ for }x \text{ near }-1
\end{cases}
\right\}.
\end{aligned}
\end{equation}
Note that the functions $\f_1^+$ and $\f_1^-$ simply behave like the function $1$ near the endpoints $1$ and $-1$ respectively. The dependence of the functions $\psi_j^+$ and $\psi_j^-$ on the parameters $\al$ and $\beta$ is suppressed here for simplicity. All of these functions are in the maximal domain \cite[Lemma 4.3]{FL} and for $j\leq n$ they are also not in the minimal domain \cite[Theorem 4.4]{FFL}. The functions can also be used to define two finite-dimensional subspaces of $\cD\ti{max}^{{\bf J},n}$:

\begin{align*} 
{\bf D}^n_-:=\spa\left\{\left\{\f_j^-\right\}_{j=1}^{n},\left\{\psi_j^-\right\}_{j=1}^{n}\right\}, ~~{\bf D}^n_+:=\spa\left\{\left\{\f_j^+\right\}_{j=1}^{n},\left\{\psi_j^+\right\}_{j=1}^{n}\right\}.
\end{align*}

\begin{cor}[{\cite[Corollary 4.8]{FL}}]\label{c-defectspacesbasis}
The defect spaces $\cD_+^{{\bf J}, n}\dotplus\cD_-^{{\bf J}, n}={\bf D}^n_-\dotplus{\bf D}^n_+$.
\end{cor}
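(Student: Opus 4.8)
The plan is to reduce the claimed identity to a dimension count together with a linear independence statement modulo the minimal domain. By von Neumann's formula \eqref{e-vN} and the fact that the deficiency indices of $\cD^{{\bf J},n}\ti{min}$ are $(2n,2n)$, the quotient $\cD^{{\bf J},n}\ti{max}/\cD^{{\bf J},n}\ti{min}$ has dimension $4n$, and $\cD_+^{{\bf J},n}\dotplus\cD_-^{{\bf J},n}$ is a complement of $\cD^{{\bf J},n}\ti{min}$ realizing this quotient. Since ${\bf D}^n_-\dotplus{\bf D}^n_+$ is spanned by exactly $4n$ functions, all lying in $\cD^{{\bf J},n}\ti{max}$ by \cite[Lemma 4.3]{FL}, it therefore suffices to show that the $4n$ functions $\{\f_j^\pm,\psi_j^\pm\}_{j=1}^n$ are linearly independent modulo $\cD^{{\bf J},n}\ti{min}$. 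Granting this, they descend to a basis of the quotient, so $\cD^{{\bf J},n}\ti{max}=\cD^{{\bf J},n}\ti{min}\dotplus{\bf D}^n_-\dotplus{\bf D}^n_+$, and comparison with \eqref{e-vN} identifies ${\bf D}^n_-\dotplus{\bf D}^n_+$ as the defect-space complement, which is the assertion of the corollary.

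To prove linear independence modulo $\cD^{{\bf J},n}\ti{min}$, I would use the nondegeneracy of the boundary sesquilinear form on the quotient. By Definition \ref{d-min}, a function $h\in\cD^{{\bf J},n}\ti{max}$ lies in $\cD^{{\bf J},n}\ti{min}$ if and only if $[h,g]_n\big|_{-1}^1=0$ for every $g\in\cD^{{\bf J},n}\ti{max}$; hence the $4n$ functions are independent modulo the minimal domain precisely when their Gram matrix $G=\big([\theta_i,\theta_j]_n\big|_{-1}^1\big)$ is nonsingular, where $\theta_i$ runs over $\{\f_j^\pm,\psi_j^\pm\}$. Because each $\f_j^+,\psi_j^+$ vanishes near $-1$ and each $\f_j^-,\psi_j^-$ vanishes near $1$, every cross term between a $+$ function and a $-$ function drops out, so $G$ is block diagonal with a $2n\times2n$ block $G^+$ (all forms evaluated at $1$) and an analogous block $G^-$ (evaluated at $-1$). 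It thus suffices to show $\det G^+\neq0$, the endpoint $-1$ being handled symmetrically under $x\mapsto-x$, $\al\leftrightarrow\beta$.

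The main work, and the main obstacle, is the explicit evaluation of $G^+$. Using the quasi-derivative expansion \eqref{e-sesquidecomp} together with the coefficients $a_k(x)=(1-x)^{\al+k}(1+x)^{\beta+k}$, I would compute the boundary values $[\f_i^+,\f_j^+]_n(1)$, $[\f_i^+,\psi_j^+]_n(1)$ and $[\psi_i^+,\psi_j^+]_n(1)$ by tracking the leading asymptotics of the quasi-derivatives of the model functions $(1-x)^{j-1}$ and $(1-x)^{-\al+j-1}$ as $x\to1^-$. The expectation, confirmed in the uncomposed case $n=1$, is that the two ``like'' blocks vanish identically, since $\{\f_j^+\}$ and $\{\psi_j^+\}$ each span an isotropic subspace for the form (their pairwise boundary forms produce only powers whose exponents are strictly positive in the limit, or cancel by antisymmetry), while the mixed block $\big([\f_i^+,\psi_j^+]_n(1)\big)$ is anti-triangular with nonzero anti-diagonal entries arising from pairing a regular power of degree $i-1$ against the singular power of complementary degree. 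This forces $G^+=\left(\begin{smallmatrix}0&B\\-B^{*}&0\end{smallmatrix}\right)$ with $B$ invertible, whence $\det G^+=\pm(\det B)^2\neq0$. The delicate points are bookkeeping the non-integer exponents $-\al+j-1$ through up to $2n-1$ quasi-derivatives and verifying the claimed isotropy and anti-triangularity; once these asymptotic computations are in hand, the nonsingularity of $G$, and with it the corollary, follows immediately.
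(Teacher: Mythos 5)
A preliminary remark on the comparison: the paper itself contains no proof of this statement; it is quoted verbatim from \cite[Corollary 4.8]{FL}. The relevant baseline is therefore the argument of that reference, whose ingredients the present paper reproduces in Section \ref{s-powers}: Lemma \ref{l-overnv} and Corollary \ref{c-buildingblocks} (quoting \cite[Lemmas 4.5 and 4.6]{FL}), Lemma \ref{l-vv}, and the description of \cite[Theorem 4.7]{FL} following Corollary \ref{c-newerdefectspaces}. Your route --- reduce to nonsingularity of the Gram matrix of boundary forms via Definition \ref{d-min}, split $G$ into one $2n\times 2n$ block per endpoint, identify the block structure, and finish with the dimension count supplied by von Neumann's formula \eqref{e-vN} --- is essentially that argument. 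Moreover, your three structural claims, which you leave as ``expectations,'' are all true and are precisely the cited results: the vanishing of $\bigl([\f_i^+,\f_j^+]_n(1)\bigr)$ is \cite[Lemma 4.6]{FL} (cf.~Corollary \ref{c-buildingblocks}); the vanishing of $\bigl([\psi_i^+,\psi_j^+]_n(1)\bigr)$ is Lemma \ref{l-vv} under the renumbering $v_k^+=\psi^+_{n-k+1}$; and the anti-triangularity of the mixed block, with nonvanishing anti-diagonal entries, is \cite[Lemma 4.5, Theorem 4.7]{FL} (cf.~Lemma \ref{l-overnv} and the normalization implicit in \eqref{e-gs}). So the computational core of your write-up is deferred rather than carried out, but it is deferred to statements that are exactly the lemmas underlying the cited proof.

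One step, however, needs correcting. Your dimension count proves --- and can only prove --- that ${\bf D}^n_-\dotplus{\bf D}^n_+$ is a direct complement of $\cD\ti{min}^{{\bf J},n}$ in $\cD\ti{max}^{{\bf J},n}$, i.e.~that it may replace $\cD_+^{{\bf J},n}\dotplus\cD_-^{{\bf J},n}$ in \eqref{e-vN}. The final inference, that comparison with \eqref{e-vN} ``identifies'' the two spaces, is a non sequitur if read as set equality: a subspace has many distinct complements, and here the two sides genuinely differ as subsets of $\cD\ti{max}^{{\bf J},n}$. Indeed, any nonzero element of $\cD_+^{{\bf J},n}\dotplus\cD_-^{{\bf J},n}$ is a sum of solutions of $(\ell_{{\bf J}}^n\mp i)f=0$, hence real-analytic on $(-1,1)$ and unable to vanish on an open set, whereas $\f_1^+\in{\bf D}^n_+$ is nonzero and vanishes identically near $x=-1$. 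Consequently the equality asserted by the corollary must itself be understood modulo $\cD\ti{min}^{{\bf J},n}$ --- as equality of images in the quotient $\cD\ti{max}^{{\bf J},n}/\cD\ti{min}^{{\bf J},n}$, equivalently as the statement that both spaces are complements of the minimal domain in the maximal one --- which is also the only form in which the paper ever uses it (see Corollary \ref{c-newerdefectspaces} and the decomposition invoked in the proof of Theorem \ref{t-newbt}). With that reading, your argument is the intended one and is complete in outline; without it, no argument could close the gap, because the literal statement is false.
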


Regularity properties of functions in the maximal domain can also be shown using these functions. These properties will be useful in some of the proofs later on. 

\begin{theo}\label{t-maxdomaindecomp}\cite[Theorem 4.1]{FL}
If $f\in\cD\ti{max}^{{\bf J},n}$, then for $j=0,\dots,n$ 
\begin{align}\label{e-decompresult1}
    \lim_{x\to \pm 1^{\mp}}(1-x)^{\al+j}(1+x)^{\beta+j}f^{(j)}(x) \text{ is finite}.
\end{align}
Furthermore, for $k=0,\dots,n$ and each $j\in\NN$ such that $j<k$,
\begin{align}\label{e-decompresult2}
\lim_{x\to \pm 1^{\mp}}\left[(1-x)^{\al+k}(1+x)^{\beta+k}f^{(k)}(x)\right]^{(k-j)} \text{ is finite}.
\end{align}
\end{theo}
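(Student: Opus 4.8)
The plan is to localize the analysis near a single endpoint, say $x=1$ (the endpoint $x=-1$ being symmetric), and to use that ``having a finite limit at $1$'' is a linear condition on $f$, so the two assertions need only be checked on a convenient spanning set of the relevant behaviors. Write $w(x):=(1-x)^{\al}(1+x)^{\beta}$ for the weight, so that $a_k(x)=(1-x)^{\al+k}(1+x)^{\beta+k}$ in \eqref{e-njacobi}. The first observation is a soft one: by Definition \ref{d-quasiderivgeneral} with $a_0\equiv 0$, the top quasi-derivative obeys $[f^{[2n-1]}]'=\pm\,w\,\ell^n_{{\bf J}}[f]$, and $w\,\ell^n_{{\bf J}}[f]\in L^1$ near $1$. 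Indeed $g:=\ell^n_{{\bf J}}[f]\in L^2_{\al,\beta}(-1,1)$, so by Cauchy--Schwarz $\int_c^1 w\,|g|\,dx\le\big(\int_c^1 w\,dx\big)^{1/2}\big(\int_c^1 w\,|g|^2\,dx\big)^{1/2}<\infty$, the first factor being finite because $\al>-1$. Hence $f^{[2n-1]}$ already has finite limits at $\pm 1$, which is the top instance of \eqref{e-decompresult2}; the integrability $w\,g\in L^1$ is the fuel for everything that follows.

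Near $x=1$ the function $f$ solves the linear equation $\ell^n_{{\bf J}}f=g$, whose principal part has a regular singular point there. I would decompose $f$ locally as a homogeneous solution of $\ell^n_{{\bf J}}h=0$ plus a particular solution obtained by variation of parameters against the datum $w\,g$. The indicial roots at $x=1$ are $0,1,\dots,n-1$ together with $-\al,-\al+1,\dots,-\al+n-1$; this is exactly why the pieces $\f_j^+$ and $\psi_j^+$ of \eqref{e-firstkind} and \eqref{e-secondkinddefn}, which behave like $(1-x)^{j-1}$ and $(1-x)^{-\al+j-1}$, span the endpoint contribution of $\cD_+^{{\bf J},n}\dotplus\cD_-^{{\bf J},n}$ by Corollary \ref{c-defectspacesbasis}. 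For the homogeneous part the assertions are a direct computation: $a_k(x)(\f_j^+)^{(k)}(x)$ is, to leading order, a multiple of $(1-x)^{\al+j-1}$ and tends to $0$, while $a_k(x)(\psi_j^+)^{(k)}(x)$ is, to leading order, a multiple of $(1-x)^{j-1}$ and tends to a finite limit; differentiating these $k-j$ further times keeps the exponent of $(1-x)$ nonnegative, so \eqref{e-decompresult1} and \eqref{e-decompresult2} hold on the homogeneous part. The nonzero limits contributed by the $\psi_j^+$ are precisely the non-principal directions responsible for the degeneracy discussed in the introduction.

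It remains to control the particular solution. Here I would differentiate the variation-of-parameters representation term by term, in each case pairing the weight $a_k$, which vanishes at $x=1$, against the growth of the Green's kernel and the $L^1$ datum $w\,g$, to show that each $[a_kf^{(k)}]^{(k-j)}$ converges. Once the derivative limits of \eqref{e-decompresult2} are in hand, the plain limits of \eqref{e-decompresult1} come for free: on a bounded interval abutting $1$, if $[a_kf^{(k)}]'$ has a finite limit then so does $a_kf^{(k)}$, and the base cases $j=0$ and $j=n$ are $w\,f$ and $a_nf^{(n)}=f^{[n]}$, the latter handled by the descent $f^{[n+m]}=a_{n-m}f^{(n-m)}-[f^{[n+m-1]}]'$ seeded by the first paragraph.

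The hard part is the variation-of-parameters estimate for the particular solution. Because the two families of indicial exponents at $x=1$, namely $\{j-1\}$ and $\{-\al+j-1\}$, differ only by $\al\in(0,1)$, the individual terms $[a_kf^{(k)}]^{(k-j)}$ exhibit the Euler-type near-cancellations flagged in the introduction: the quasi-derivatives converge as aggregates, but isolating each constituent requires the precise indicial data and a careful accounting of how the vanishing factor $a_k$ competes with repeated differentiation of the kernel. That both endpoints are limit-circle is exactly what guarantees every homogeneous solution lies in $L^2_{\al,\beta}$ near $1$, so the local decomposition is internally consistent and the particular piece built from $w\,g\in L^1$ stays controlled.
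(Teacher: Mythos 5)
You should first be aware that the paper contains no proof of this statement at all: Theorem \ref{t-maxdomaindecomp} is imported verbatim from \cite[Theorem 4.1]{FL}, and the present manuscript simply cites it. There is therefore no internal proof to compare against, and your proposal must stand on its own as a proof of the external result. As written, it does not: it is a strategy outline whose decisive step is acknowledged but never executed.

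The central difficulty of the statement is disaggregation, and your argument never resolves it. Equation \eqref{e-decompresult2} asserts a finite limit for each \emph{individual} term $\left[a_kf^{(k)}\right]^{(k-j)}$, whereas your $L^1$ argument (which is correct) only controls the aggregate quasi-derivative $f^{[2n-1]}$, which by \eqref{e-rewritelastn} is an alternating \emph{sum} $\sum_{k=1}^n(-1)^{k+1}\left[a_kf^{(k)}\right]^{(k-1)}$ of such terms; calling it ``the top instance of \eqref{e-decompresult2}'' conflates a sum with its summands, and the same conflation undermines your descent $f^{[n+k]}=a_{n-k}f^{(n-k)}-\left[f^{[n+k-1]}\right]'$, which again only ever controls combinations, never the constituent pieces. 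The one step that would actually disaggregate --- the variation-of-parameters estimate showing that each $\left[a_kf_p^{(k)}\right]^{(k-j)}$ of the particular solution converges --- is precisely the step you label ``the hard part'' and leave as a description of obstacles; that estimate \emph{is} the analytic content of the theorem, so omitting it leaves a genuine gap rather than a routine verification. Two further problems: (i) your Frobenius treatment of the homogeneous part tacitly assumes log-free pure-power behavior, but the indicial roots within each family ($\{0,1,\dots,n-1\}$ and $\{-\al,\dots,-\al+n-1\}$) differ by integers, so logarithmic solutions must be excluded by an argument (they can be, for $\al,\beta\in(0,1)$, but this is part of what \cite{FL} proves, not a free assumption); (ii) invoking Corollary \ref{c-defectspacesbasis}, which is \cite[Corollary 4.8]{FL}, to prove \cite[Theorem 4.1]{FL} is circular unless you verify that in the source paper the decomposition of the defect spaces is established independently of the regularity theorem --- the numbering and the logical structure of \cite{FL} strongly suggest the opposite, namely that the decomposition results are built on top of Theorem 4.1.
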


The convenient basis for the defect spaces given above should allow for operations to be defined that recreate quasi-derivatives. Indeed, this is the case for the functions $\f_j^+$ and $\f_j^-$.

\begin{lem}\label{l-gamma0correct}
The $(2n-s)$ quasi-derivative of $f\in\cD\ti{max}^{{\bf J},n}$ at $x=1$ and $x=-1$ is generated by $\f_s^+$ and $\f_s^-$, respectively, for $s=1,\dots,n$. Explicitly, we have
\begin{equation}\label{e-blockgenerate}
\begin{aligned}
f^{[2n-s]}(1)&=\frac{(-1)^s}{(s-1)!}[f,\f_s^+]_n(1) \\
-f^{[2n-s]}(-1)&=\frac{1}{(s-1)!}[f,\f_s^-]_n(-1).
\end{aligned}
\end{equation}
\end{lem}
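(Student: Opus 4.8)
My plan is to substitute $g=\f_s^+$ directly into the quasi-derivative decomposition \eqref{e-sesquidecomp} of the sesquilinear form and let $x\to 1^-$, showing that every summand vanishes in the limit except the single one that reproduces $f^{[2n-s]}(1)$. The one genuine difficulty I anticipate is that the lower pieces $f^{[k-1]}=f^{(k-1)}$ (for $k\le n$) are precisely the quasi-derivatives that are \emph{not} controlled pointwise at the endpoint; so I cannot simply argue that a product dies because a quasi-derivative of $\f_s^+$ vanishes there. Instead I will need to match the decay rate of $(\f_s^+)^{[2n-k]}$ against the blow-up rate of $f^{(k-1)}$, the latter supplied by Theorem \ref{t-maxdomaindecomp}.

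First I would record a closed form for the upper quasi-derivatives. Inducting on the recursion in \eqref{e-quasiderivgeneral} (base case $g^{[n]}=a_ng^{(n)}$, inductive step $g^{[2n-k]}=a_kg^{(k)}-[g^{[2n-k-1]}]'$), one verifies that for any sufficiently smooth $g$ and $k=1,\dots,n$,
\[
g^{[2n-k]}=\sum_{j=k}^{n}(-1)^{j-k}\big[a_j(x)\,g^{(j)}\big]^{(j-k)},\qquad a_j(x)=(1-x)^{\al+j}(1+x)^{\beta+j}.
\]
Applying this to $g=\f_s^+$, which equals the polynomial $(1-x)^{s-1}$ near $x=1$, I would write $a_jg^{(j)}=(1-x)^{\al+j}h(x)$ with $h$ smooth at $1$; by Leibniz every term of $[a_jg^{(j)}]^{(j-k)}$ carries a factor $(1-x)^{\al+j-i}$ with $0\le i\le j-k$, so the smallest exponent present is $\al+k>0$. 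Hence $(\f_s^+)^{[2n-k]}(x)=O\big((1-x)^{\al+k}\big)$ as $x\to 1^-$ for every $k=1,\dots,n$.

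Now I would evaluate \eqref{e-sesquidecomp} at $x=1$, using $\overline{\f_s^+}=\f_s^+$. For the products $f^{[k-1]}(\f_s^+)^{[2n-k]}$ (where $k-1\le n-1$, so $f^{[k-1]}=f^{(k-1)}$), I combine the bound above with $f^{(k-1)}(x)=O\big((1-x)^{-\al-(k-1)}\big)$ from \eqref{e-decompresult1}; the product is then $O(1-x)\to 0$. For the products $f^{[2n-k]}(\f_s^+)^{(k-1)}$, the polynomial derivative $(\f_s^+)^{(k-1)}$ vanishes at $x=1$ unless $k=s$, where it equals $(-1)^{s-1}(s-1)!$, while $f^{[2n-k]}(1)$ is finite by \eqref{e-decompresult2}; so only $k=s$ survives. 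Collecting gives $[f,\f_s^+]_n(1)=-(-1)^{s-1}(s-1)!\,f^{[2n-s]}(1)$, and since $-(-1)^{s-1}=(-1)^s$ this rearranges to the first identity.

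The endpoint $x=-1$ is handled identically after swapping the roles of the factors: the vanishing factor in $a_j$ is now $(1+x)^{\beta+j}$, and $(\f_s^-)^{(s-1)}(-1)=(s-1)!$ carries no sign, which produces $-f^{[2n-s]}(-1)=\tfrac{1}{(s-1)!}[f,\f_s^-]_n(-1)$. The hard part, as flagged, is the $O(1-x)$ cancellation in the $f^{[k-1]}(\f_s^+)^{[2n-k]}$ terms: recognizing that $(\f_s^+)^{[2n-k]}(1)=0$ is \emph{not} enough on its own, and that one must match the exponents $\al+k$ and $-\al-(k-1)$ using the quantitative regularity of Theorem \ref{t-maxdomaindecomp}. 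Everything else is bookkeeping with the closed form.
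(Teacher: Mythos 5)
Your proposal is correct and is essentially the paper's own argument: the paper splits the sesquilinear form \eqref{e-njacobisesqui} into the two groups $P(x)$ and $N(x)$, which are exactly your two groups in \eqref{e-sesquidecomp}, kills the first group by the same exponent-matching against Theorem \ref{t-maxdomaindecomp} (your $O\bigl((1-x)^{\al+k}\bigr)$ versus $O\bigl((1-x)^{-\al-(k-1)}\bigr)$ cancellation), and identifies the sole surviving $j=s$ contribution of the second group with $f^{[2n-s]}$ via the formula \eqref{e-rewritelastn}, which is precisely the closed form you derive from the recursion \eqref{e-quasiderivgeneral}. The only difference is organizational---you package the Leibniz bookkeeping into quasi-derivatives before taking the limit---so the two proofs coincide in substance, including the sign accounting at both endpoints.
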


\begin{proof}
We will prove the result for the endpoint $x=1$ and the analogous result for $x=-1$ will follow. Consider $(1-x)^{s-1}$, for fixed $s\in\NN$ and $s\leq n$. Deconstruct the expression for the sesquilinear form given by \eqref{e-njacobisesqui} into the terms
\begin{align*}
P(x):=\sum_{k=1}^n\sum_{j=1}^k(-1)^{k+j}\left[(1-x)^{\al+k}(1+x)^{\beta+k}\left[(1-x)^{s-1}\right]^{(k)}\right]^{(k-j)}f^{(j-1)}(x), \\
N(x):=\sum_{k=1}^n\sum_{j=1}^k(-1)^{k+j+1}\left[(1-x)^{\al+k}(1+x)^{\beta+k}f^{(k)}(x)\right]^{(k-j)}\left[(1-x)^{s-1}\right]^{(j-1)},
\end{align*}
so that $[f,(1-x)^{s-1}]_n=\lim\ci{x\to 1^-}[P(x)+N(x)]$. We first analyze $\lim\ci{x\to 1^-}P(x)$ and notice that any terms with $k>s-1$ are automatically 0 for all $j$. Therefore, fix $k$ such that $k\leq s-1$ and calculate
\begin{align*}
    \lim_{x\to 1^-}P(x)&\approx\lim_{x\to 1^-}\sum_{k=1}^n\sum_{j=1}^k(-1)^{k+j}\left[(1+x)^{\beta+k}(1-x)^{\al+k}(1-x)^{s-1-k}\right]^{(k-j)}f^{(j-1)}(x) \\
    &\approx\lim_{x\to 1^-}\sum_{k=1}^n\sum_{j=1}^k\sum_{i=0}^{k-j}(1+x)^{\beta+k-i}(1-x)^{\al+s-1-k+j+i}f^{(j-1)}(x).
\end{align*}
For each $j$, we see that the factor in the sum is 0 as $x\to 1^-$ because of equation \eqref{e-decompresult1} and $s-1-k+j+i>j-1$. We conclude that $P(x)=0$ as $x\to 1^-$.

The expression $N(x)$ is clearly 0 for all $j>s$. Similarly, if $j<s$ then a factor of $(1-x)$ survives differentiation and equation \eqref{e-decompresult2} implies that the entire product has a limit of 0 as $x\to 1^-$. We therefore limit our attention to the case $j=s$, which only occurs when $k\geq s$. This leaves us with
\begin{align}\label{e-matching}
    \lim_{x\to 1^-}N(x)&=\lim_{x\to 1^-}\sum_{k=s}^n(-1)^{k+s+1}\left[(1-x)^{\al+k}(1+x)^{\beta+k}f^{(k)}(x)\right]^{(k-s)}(-1)^{s-1} (s-1)! \nonumber \\
    &=\lim_{x\to 1^-}\sum_{k=s}^n(-1)^k(s-1)!\left[(1-x)^{\al+k}(1+x)^{\beta+k}f^{(k)}(x)\right]^{(k-s)}.
\end{align}
We see from equation \eqref{e-quasiderivgeneral} that we can rewrite
\begin{equation}\label{e-rewritelastn}
\begin{aligned}
f^{[2n-s]}(\pm 1)&=\lim_{x\to \pm 1^{\mp}}\sum_{k=s}^n(-1)^k\left[a_k(x)f^{(k)}(x)\right]^{(k-s)} \text{ for }s\text{ even},\\
f^{[2n-s]}(\pm 1)&=\lim_{x\to \pm 1^{\mp}}\sum_{k=s}^n(-1)^{k+1}\left[a_k(x)f^{(k)}(x)\right]^{(k-s)} \text{ for }s\text{ odd}.
\end{aligned}
\end{equation}
A comparison of equations \ref{e-matching} and \ref{e-rewritelastn} yields the result for $f^{[2n-s]}(1)$.
\end{proof}

Note that the Lemma holds only at the endpoints, and not for $x$ in the interior of the interval. These operations are particularly important because they are 0 when applied to the functions $\f_j^+$ and $\f_j^-$ for other values of $j$.

\begin{cor}\label{c-buildingblocks}
Let $j,k\in\NN$ such that $j,k<n$. Then
\begin{align*}
    \left[\f_j^+\right]^{[2n-k]}(1)=\left[\f_j^-\right]^{[2n-k]}(-1)=0.
\end{align*}
\end{cor}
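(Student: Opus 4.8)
The plan is to reduce the claim to the vanishing of a single sesquilinear form and then dispatch that by a power count. Since $\f_j^+,\f_k^+\in\cD\ti{max}^{{\bf J},n}$ by \cite[Lemma 4.3]{FL}, I would apply Lemma \ref{l-gamma0correct} directly with $f=\f_j^+$ and $s=k$ (legitimate since $k<n$, so in particular $k\le n$), which gives
\begin{align*}
    \left[\f_j^+\right]^{[2n-k]}(1)=\frac{(-1)^k}{(k-1)!}\,[\f_j^+,\f_k^+]_n(1).
\end{align*}
The corresponding reduction at $x=-1$ comes from the second formula in \eqref{e-blockgenerate} applied to $f=\f_j^-$. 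Hence the entire statement follows once I establish $[\f_j^+,\f_k^+]_n(1)=0$ together with its mirror $[\f_j^-,\f_k^-]_n(-1)=0$.

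To prove the vanishing I would insert the explicit form \eqref{e-njacobisesqui} and use that near $x=1$ the functions are genuine polynomials, $\f_j^+(x)=(1-x)^{j-1}$ and $\f_k^+(x)=(1-x)^{k-1}$, so that all conjugations are harmless. Every summand carries a factor $a_p(x)=(1-x)^{\al+p}(1+x)^{\beta+p}$. In a term $\big[a_p(\f_k^+)^{(p)}\big]^{(p-q)}(\f_j^+)^{(q-1)}$ the inner derivative $(\f_k^+)^{(p)}$ is zero unless $p\le k-1$, and in that case $a_p(\f_k^+)^{(p)}$ is a constant multiple of $(1-x)^{\al+k-1}(1+x)^{\beta+p}$. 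Differentiating $(p-q)$ more times (with $0\le p-q$, since $1\le q\le p$) lowers the exponent of $(1-x)$ by at most $p-q$, leaving $(1-x)$ to a power at least $\al+k-1-(p-q)=\al+(k-1-p)+q\ge\al+1>0$, using $p\le k-1$ and $q\ge1$. The surviving factor $(\f_j^+)^{(q-1)}$ is bounded near $x=1$, so each such term tends to $0$, and the summands with the roles of $f$ and $g$ reversed behave symmetrically; thus the limit is $0$. The endpoint $-1$ is handled identically with $(1+x)^{\beta}$ in place of $(1-x)^{\al}$.

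The only point requiring care—and the main obstacle—is the combinatorial bookkeeping of how the Leibniz expansion distributes derivatives between the $a_p$ factor and the polynomial; the essential observation is that the non-integer exponent $\al>0$ (resp.\ $\beta>0$) is never consumed by differentiation, because the $C^\infty$ data $\f_j^+$ contribute only integer powers of $(1-x)$, guaranteeing a strictly positive leftover $(1-x)$-exponent in every surviving summand. An equivalent, slightly more direct route avoids reintroducing the full form: feed $f=\f_j^+$ into equation \eqref{e-rewritelastn} from the proof of Lemma \ref{l-gamma0correct}, note that $(\f_j^+)^{(p)}$ vanishes for $p\ge j$ so only indices $k\le p\le j-1$ contribute (an empty range, hence $0$, when $j\le k$), and run the identical power count with exponent $\al+j-1-(p-k)\ge\al+1>0$. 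Either way the computation is routine once the positivity of that exponent is isolated.
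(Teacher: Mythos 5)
Your proposal is correct, and its skeleton matches the paper's: both reduce the quasi-derivative to a sesquilinear form via Lemma \ref{l-gamma0correct} (equation \eqref{e-blockgenerate}), so that everything hinges on the vanishing of $[\f_j^+,\f_k^+]_n(1)$ and $[\f_j^-,\f_k^-]_n(-1)$. The difference is in how that vanishing is established. The paper's entire proof is a citation: it invokes Lemma 4.6 of \cite{FL}, which states precisely that two $\f$'s set against each other in the sesquilinear form give $0$. You instead prove this fact from scratch by expanding \eqref{e-njacobisesqui} and running a power count: since $(\f_k^+)^{(p)}$ survives only for $p\le k-1$, the factor $a_p(\f_k^+)^{(p)}$ carries $(1-x)^{\al+k-1}$, and the $(p-q)$ further derivatives can lower that exponent to at worst $\al+(k-1-p)+q\ge\al+1>0$, so every summand tends to $0$; the key structural point you isolate --- that the $C^\infty$ data contribute only integer powers of $(1-x)$, so the exponent $\al>0$ is never consumed --- is exactly the mechanism behind \cite[Lemma 4.5--4.6]{FL}. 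What your version buys is self-containment (no reliance on the companion manuscript) at the cost of length; your second, shorter route through \eqref{e-rewritelastn} is even cleaner, since it never reassembles the full sesquilinear form and makes the empty-range case $j\le k$ transparent. Both computations check out, including the case $j=k$, which the statement permits and which your power count handles without modification.
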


\begin{proof}
The result follows immediately from Lemma 4.6 of \cite{FL}, which says that two $\f_j^{\pm}$'s against each other in the sesquilinear form is 0.
\end{proof}

\subsection{Regularizations of Quasi-Derivatives}\label{ss-regularizations}

The quasi-derivatives $f^{[j]}(x)$, for $j\in\NN_0$ and $j<n$, are not well-defined for all $f\in\cD\ti{max}^{{\bf J},n}$, so it is necessary to generate operations that represent regularizations of these quasi-derivatives which are well-defined. Unfortunately, the functions $\psi_j^+$ and $\psi_j^-$, for $j\in\NN$ and $j<n$, seem to be unsuitable for this purpose due to some built-in degeneracy. This is discussed in more detail after Corollary \ref{c-newerdefectspaces}, as the main problem is only identifiable after some other structure is introduced. Instead, begin by renumbering the elements so that, for $k=1,\dots,n$, yields
\begin{equation*}
\begin{aligned}
v_k^+(x)&:=\left.
\begin{cases}
(1-x)^{-\al+n-k}, & \text{ for }x \text{ near }1 \\
0, & \text{ for }x \text{ near }-1
\end{cases}
\right\}, \\
v_k^-(x)&:=\left.
\begin{cases}
0, & \text{ for }x \text{ near }1 \\
(1+x)^{-\beta+n-k}, & \text{ for }x \text{ near }-1
\end{cases}
\right\}.
\end{aligned}
\end{equation*}
We now carry out a modified version of the Gram--Schmidt procedure on each set of functions $\left\{v_k^+\right\}_{k=1}^n$ and $\left\{v_k^-\right\}_{k=1}^n$ in order to ensure that 
\begin{align}\label{e-gsgoal}
[\f_j^+,v_k^+]_n(1)=[\f_j^-,v_k^-]_n(-1)=0,
\end{align}
for all $j\neq k$. The new sets of functions will be denoted by $\left\{u_k^+\right\}_{k=1}^n$ and $\left\{u_k^-\right\}_{k=1}^n$. The usual Gram--Schmidt procedure with inner products replaced by sesquilinear forms will not suffice here because any real-valued function set against itself in the sesquilinear form yields 0. The procedure for the construction of the functions $\left\{u_k^+\right\}_{k=1}^n$ is now described and the set $\left\{u_k^-\right\}_{k=1}^n$ will be constructed analogously. Define
\begin{equation}\label{e-gs}
\begin{aligned}
    u_1^+&:=v_1^+/[\f_1^+,v_1^+](1), \\
    u_2^+&:=\left\{v_2^+-\dfrac{[\f_1^+,v_2^+](1)}{[\f_1,u_1^+](1)}u_1^+\right\}/[\f_2^+,v_2^+](1), \\
        &\vdots \\
    u_k^+&:=\left\{v_k^+-\sum_{j=1}^{k-1}\dfrac{[\f_j^+,v_k^+](1)}{[\f_j^+,u_j^+](1)}u_j^+\right\}/[\f_k^+,v_k^+](1),
\end{aligned}
\end{equation}
and the subscript $n$ is suppressed on all of the sesquilinear forms for the sake of simplicity. This convention will continue to be used when exploring some of the consequences of this construction. First, recall a result from \cite{FL} formatted to match the current notation.

\begin{lem}\label{l-overnv}
Let $s,t\in\NN$ such that $s,t\leq n$. If $s>t$, then
\begin{align*}
    \left[\f_s^+,v_t^+\right]_n(1)=\left[\f_s^-,v_t^-\right]_n(-1)=0.
\end{align*}
\end{lem}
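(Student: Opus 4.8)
The plan is to establish the vanishing by a direct power-counting argument in the explicit formula \eqref{e-njacobisesqui} for the sesquilinear form. Since $[\,\cdot\,,\,\cdot\,]_n(1)$ is the limit as $x\to 1^-$, only the behavior of the two functions near $x=1$ matters, so I may substitute $\f_s^+(x)=(1-x)^{s-1}$ and $v_t^+(x)=(1-x)^{-\al+n-t}$; both are real-valued, so the conjugation in \eqref{e-njacobisesqui} is harmless. The entire task then reduces to showing that every summand of $[\f_s^+,v_t^+]_n(x)$ carries a strictly positive power of $(1-x)$ as $x\to 1^-$.

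The key simplification comes from the structure of $a_k(x)=(1-x)^{\al+k}(1+x)^{\beta+k}$. First I would compute $a_k(x)\,[v_t^+]^{(k)}(x)$: since $[v_t^+]^{(k)}$ is a constant multiple of $(1-x)^{-\al+n-t-k}$, the factors $(1-x)^{\al+k}$ and $(1-x)^{-\al-k}$ cancel, leaving a constant times $(1+x)^{\beta+k}(1-x)^{n-t}$. Likewise $a_k(x)\,[\f_s^+]^{(k)}(x)$ is a constant times $(1+x)^{\beta+k}(1-x)^{\al+s-1}$ (and vanishes once $k\ge s$). In each of the two families of terms in \eqref{e-njacobisesqui} I then apply the Leibniz rule to the outer $(k-j)$-fold derivative, noting that the analytic factor $(1+x)^{\beta+k}$ stays bounded and nonvanishing near $x=1$ while each derivative that lands on the $(1-x)$-factor lowers its exponent by one. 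Combining with the remaining factor $[\f_s^+]^{(j-1)}$ (a multiple of $(1-x)^{s-j}$) or $[v_t^+]^{(j-1)}$ (a multiple of $(1-x)^{-\al+n-t-j+1}$), a short bookkeeping shows that every surviving summand equals a smooth bounded function of $(1+x)$ near $x=1$ times $(1-x)^{\,s+n-t-k+i}$, where $i\ge 0$ counts the derivatives that fell on the $(1+x)$-factor.

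It then remains only to bound this exponent. Because $1\le k\le n$ and $i\ge 0$, one has $s+n-t-k+i\ge s+n-t-n=s-t$, and the hypothesis $s>t$ forces $s-t\ge 1>0$. Hence every summand tends to $0$ as $x\to 1^-$, so $[\f_s^+,v_t^+]_n(1)=0$. The identical computation at the left endpoint, with the roles of $\al$ and $\beta$ and of $(1-x)$ and $(1+x)$ interchanged, gives $[\f_s^-,v_t^-]_n(-1)=0$. I expect the only delicate point to be the exponent bookkeeping through the repeated Leibniz expansions; the saving grace is that I never need the exact coefficients, only the uniform lower bound $s-t$ on the power of $(1-x)$, which renders those coefficients irrelevant to the vanishing.
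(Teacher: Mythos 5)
Your proof is correct, but it takes a genuinely different route from the paper. The paper disposes of this lemma in one line by invoking an external result, Lemma 4.5 of \cite{FL}, which states that whenever the sum of the $(1\pm x)$-exponents of the two arguments (not counting $-\al$ or $-\beta$) exceeds $n-1$, the sesquilinear form vanishes; the proof then only checks that $(s-1)+(n-t)=n-1+s-t>n-1$ precisely when $s>t$. You instead re-derive the needed special case of that fact from scratch by expanding \eqref{e-njacobisesqui} and power counting, and your bookkeeping is accurate: in both families of terms the $\al$-powers cancel against $a_k(x)=(1-x)^{\al+k}(1+x)^{\beta+k}$, and every summand carries the factor $(1-x)^{s+n-t-k+i}$ with $k\le n$, $i\ge0$, hence exponent at least $s-t\ge1$. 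What your approach buys is self-containedness and transparency: it exposes why this particular pairing is easy, namely that because $\f_s^+$ is polynomial near $x=1$ the exponents of every term are strictly positive, so each term tends to zero individually and no cancellation argument is needed --- in contrast to the paper's own Lemma \ref{l-vv}, where negative exponents force a contradiction argument via Theorem \ref{t-limits}. What the paper's citation buys is brevity and uniformity, since \cite[Lemma 4.5]{FL} covers all exponent pairings at once (including the harder ones your direct method would not handle without such an extra argument).
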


\begin{proof}
The exponent of $(1\pm x)$ for the function $\f_s^{\pm}$ is $s-1$ and $n-t$ for the function $v_t^{\pm}$, not counting the $-\al$ or $-\beta$. Their sum is thus $n-1+s-t$. Lemma 4.5 of \cite{FL} says that if the sum of these exponents is greater than $n-1$, equivalently $s>t$ in this case, then the sesquilinear form will yield a value of 0. The result follows.
\end{proof}

The Lemma can be easily modified to work for the new functions $\left\{u_k^+\right\}_{k=1}^n$ and $\left\{u_k^-\right\}_{k=1}^n$. 

\begin{cor}\label{c-overnu}
Let $s,t\in\NN$ such that $s,t\leq n$. If $s>t$, then
\begin{align*}
    \left[\f_s^+,u_t^+\right]_n(1)=\left[\f_s^-,u_t^-\right]_n(-1)=0.
    \end{align*}
\end{cor}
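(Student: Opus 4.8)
The plan is to combine the triangular structure of the Gram--Schmidt recursion \eqref{e-gs} with the linearity of the sesquilinear form, and then to invoke Lemma \ref{l-overnv} termwise. The central observation is that $u_t^+$ lies in $\spa\{v_1^+,\dots,v_t^+\}$, which I would verify by induction on $t$. The base case $u_1^+=v_1^+/[\f_1^+,v_1^+](1)$ is immediate. For the inductive step, the recursion presents $u_k^+$ as a scalar multiple of $v_k^+$ minus a linear combination of $u_1^+,\dots,u_{k-1}^+$; each of these already lies in $\spa\{v_1^+,\dots,v_{k-1}^+\}$ by the inductive hypothesis, so $u_k^+\in\spa\{v_1^+,\dots,v_k^+\}$. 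Writing $u_t^+=\sum_{j=1}^t c_j v_j^+$, I note that each coefficient $c_j$ is real, being a product and quotient of sesquilinear forms of real-valued functions, which are themselves real.

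Next I would expand, using that $[\cdot,\cdot]_n$ is linear in its first entry, conjugate-linear in its second, and that the $v_j^+$ and $c_j$ are real:
\begin{align*}
    [\f_s^+,u_t^+]_n(1)=\sum_{j=1}^t c_j\,[\f_s^+,v_j^+]_n(1).
\end{align*}
The hypothesis $s>t$ yields $s>j$ for every $j=1,\dots,t$, so Lemma \ref{l-overnv} forces each summand $[\f_s^+,v_j^+]_n(1)$ to vanish, giving $[\f_s^+,u_t^+]_n(1)=0$. Running the identical argument at the left endpoint with $\{v_j^-\}$ and $\{u_k^-\}$ produces $[\f_s^-,u_t^-]_n(-1)=0$.

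The argument is essentially bookkeeping, so no genuine obstacle arises; the only points requiring care are that $u_t^+$ never acquires a component along some $v_j^+$ with $j>t$ (which would break the termwise application of Lemma \ref{l-overnv}) and that the normalizing denominators $[\f_j^+,u_j^+](1)$ appearing in \eqref{e-gs} are nonzero so that the recursion is well-defined. Both follow immediately from the upper-triangular form of the recursion, and the restriction $s,t\le n$ keeps every function inside ${\bf D}^n_\pm$, where Lemma \ref{l-overnv} holds.
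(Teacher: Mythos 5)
Your proof is correct and follows the same route as the paper: the paper's own argument is precisely that $u_t^{\pm}$ is a finite linear combination of $v_i^{\pm}$ with $i\leq t$, so that $s>t$ forces $s>i$ for each term and Lemma \ref{l-overnv} kills every summand. Your version merely makes explicit the induction showing $u_t^+\in\spa\{v_1^+,\dots,v_t^+\}$ and the reality of the coefficients, which the paper leaves implicit.
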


\begin{proof}
The function $u_t^{\pm}$ is constructed by adding and subtracting finitely many $v_i^{\pm}$ functions, for $i\leq t$, in certain proportions. Hence, if $s>t$ then $s>i$ for each $v_i^{\pm}$ and Lemma \ref{l-overnv} proves the result.
\end{proof}

Finally, it is possible to show that the modified Gram--Schmidt procedure has produced functions which satisfy the analog of equation \eqref{e-gsgoal}.

\begin{theo}\label{t-uinteract}
Let $j,k\in\NN$ and $j,k\leq n$. Then for all $j\neq k$ we have
\begin{align}\label{e-claim1}
    [\f_j^+,u_k^+]_n(1)=[\f_j^-,u_k^-]_n(-1)=0.
\end{align}
Additionally,
\begin{align}\label{e-claim2}
[\f_k^+,u_k^+]_n(1)=[\f_k^-,u_k^-]_n(-1)=1.
\end{align}
\end{theo}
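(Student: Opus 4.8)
The plan is to establish both identities at the right endpoint $x=1$, the case $x=-1$ being entirely analogous (with $\f_j^-$, $u_k^-$, $v_k^-$ and $\beta$ replacing $\f_j^+$, $u_k^+$, $v_k^+$ and $\al$), and to argue by induction on $k$. The one structural fact I would invoke throughout is that the form $[\cdot,\cdot]_n$ of \eqref{e-njacobisesqui} is bilinear when restricted to real-valued functions, since conjugation acts trivially there; this is what lets the real linear combinations defining $u_k^+$ in \eqref{e-gs} be pulled through the second slot of the form, term by term.

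Half of \eqref{e-claim1} is immediate and needs no induction: whenever $j>k$ one has $[\f_j^+,u_k^+]_n(1)=0$ directly from Corollary \ref{c-overnu} (take $s=j>k=t$). So the real content is the range $j<k$ together with the diagonal $j=k$, and for these the inductive hypothesis will be that \eqref{e-claim1} and \eqref{e-claim2} already hold for every index strictly less than $k$, i.e.\ $[\f_j^+,u_i^+]_n(1)=\delta_{ij}$ for all $i<k$ and all $j\le n$.

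The base case $k=1$ is the normalization $u_1^+=v_1^+/[\f_1^+,v_1^+]_n(1)$, which gives $[\f_1^+,u_1^+]_n(1)=1$ at once, while $[\f_j^+,u_1^+]_n(1)=0$ for $j>1$ is Corollary \ref{c-overnu}. For the inductive step the decisive simplification is that, by the inductive form of \eqref{e-claim2}, every denominator $[\f_j^+,u_j^+]_n(1)$ occurring in \eqref{e-gs} equals $1$, so that
\[
u_k^+=\frac{1}{[\f_k^+,v_k^+]_n(1)}\left(v_k^+-\sum_{i=1}^{k-1}[\f_i^+,v_k^+]_n(1)\,u_i^+\right).
\]
Pairing $\f_j^+$ against this and using bilinearity, for $j<k$ the inductive orthogonality collapses the sum to its single $i=j$ term, which cancels exactly the $[\f_j^+,v_k^+]_n(1)$ coming from $v_k^+$, yielding $0$; for $j=k$ each $[\f_k^+,u_i^+]_n(1)$ with $i<k$ vanishes by Corollary \ref{c-overnu}, leaving only the normalized $v_k^+$ contribution, which is $1$. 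This closes the induction and proves both \eqref{e-claim1} and \eqref{e-claim2}.

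The step I expect to demand the most care is not this algebra but the tacit well-definedness of the recursion \eqref{e-gs}: the divisions presuppose that the diagonal pairings $[\f_k^+,v_k^+]_n(1)$ are nonzero. Here $\f_k^+\sim(1-x)^{k-1}$ and $v_k^+\sim(1-x)^{-\al+n-k}$, whose exponents (ignoring the $-\al$) sum to exactly $n-1$ --- the borderline case sitting just outside the vanishing regime of Lemma \ref{l-overnv}. I would therefore verify separately, by the same boundary-term bookkeeping used in the proof of Lemma \ref{l-gamma0correct} (tracking which terms of \eqref{e-njacobisesqui} survive the limit $x\to1^-$ in view of Theorem \ref{t-maxdomaindecomp}), that this critical pairing is a nonzero constant. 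This guarantees that \eqref{e-gs} is executable and that the diagonal value in \eqref{e-claim2} is genuinely $1$.
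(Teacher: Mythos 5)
Your proof is correct and follows essentially the same route as the paper's: induction on $k$ at the endpoint $x=1$, with Corollary \ref{c-overnu} eliminating the terms $[\f_j^+,u_l^+]_n(1)$ for $j>l$ and the inductive hypothesis eliminating those with $j<l$, so that the single surviving $l=j$ term cancels the leading $[\f_j^+,v_k^+]_n(1)$ contribution, while the diagonal case $j=k$ collapses to the normalized $v_k^+$ term. Your additional verification that the diagonal pairings $[\f_k^+,v_k^+]_n(1)$ are nonzero is a point the paper leaves implicit (it simply divides by these quantities in \eqref{e-gs}), so flagging it is a sensible refinement rather than a departure from the paper's argument.
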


\begin{proof}
We prove the result for the endpoint $x=1$ and $x=-1$ will follow analogously. Proceed by induction on $k$. Let $j\in\NN$. Equation \eqref{e-claim1} holds for the base case of 
\begin{align}\label{e-basecase}
    [\f_j^+,u_1^+](1)=0,
\end{align}
for $j>1$ by Lemma \ref{l-overnv}, and equation \eqref{e-claim2} clearly holds when $j=1$. 

Make the inductive hypothesis that the two claims in equations \eqref{e-claim1} and \eqref{e-claim2} hold if $k\leq i$ and consider the case where $k=i+1$:
\begin{align}\label{e-breakdown}
    [\f_j^+,u_{i+1}^+](1)=\left\{[\f_j^+,v_{i+1}^+](1)-\sum_{l=1}^i\dfrac{[\f_l^+,v_{i+1}^+](1)}{[\f_l^+,u_l^+](1)}[\f_j^+,u_l^+](1)\right\}/[\f_{i+1}^+,v_{i+1}^+](1).
\end{align}
If $j>i+1$ then equation \eqref{e-claim1} holds by Corollary \ref{c-overnu}. Let $j<i+1$. Terms when $l<j$ in equation \eqref{e-breakdown} are then also zero by Corollary \ref{c-overnu}. But terms when $l>j$ are also zero by the inductive hypothesis. The one remaining term in the sum, when $l=j$, is then cancelled by the term in front. Hence, equation \eqref{e-claim1} has been proven when $k=i+1$. 

Let $j=i+1$. Then each term in the sum is zero by Corollary \ref{c-overnu} and equation \eqref{e-claim2} is immediately shown when $k=i+1$. The Theorem then follows at the endpoint $x=1$ by the principle of mathematical induction for $k\in\NN$ and $k\leq n$. 
\end{proof}

We now see that the denominators in the modified Gram--Schmidt process are equal to $1$ by construction. However, the intended cancellation of operations is more clear without simplifying these terms, so they will continue to be expressed in the form of equation \eqref{e-gs}. 

The theorem determines the interaction between $u$'s and $\f$'s in the sesquilinear form. Corollary \ref{c-buildingblocks} showed how the the functions $\f$ behave against each other, so it remains only to analyze the behavior of the new $u$ functions. 

\begin{lem}\label{l-vv}
Let $s,t\in\NN$ such that $s,t\leq n$. Then 
\begin{align*}
    [v_s^+,v_t^+]_n(1)=[v_s^-,v_t^-]_n(-1)=0.
\end{align*}
\end{lem}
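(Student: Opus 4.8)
The plan is to work at the endpoint $x=1$, the case $x=-1$ being entirely analogous, and to exploit the fact that $0<\al<1$ forces $\al\notin\ZZ$. First I would record that, near $x=1$, both $v_s^+$ and $v_t^+$ are the \emph{exact} monomials $(1-x)^{-\al+n-s}$ and $(1-x)^{-\al+n-t}$, and that they lie in $\cD\ti{max}^{{\bf J},n}$ (being renumbered $\psi_j^+$'s, cf.\ Corollary \ref{c-defectspacesbasis}), so that $[v_s^+,v_t^+]_n(1)$ exists and is finite by the $n$-th power analogue of Theorem \ref{t-limits}. Note in passing that the case $s=t$ is automatic from skew-symmetry of the form, but the argument below covers all $s,t$ uniformly.

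Next I would substitute $f=v_s^+$, $g=v_t^+$ into the explicit form \eqref{e-njacobisesqui} and track powers of $(1-x)$. The key observation is that $a_k(x)=(1-x)^{\al+k}(1+x)^{\beta+k}$ absorbs exactly one factor of $(1-x)^{-\al}$: since $(v_t^+)^{(k)}$ is a constant multiple of $(1-x)^{-\al+n-t-k}$, the product $a_k(v_t^+)^{(k)}$ equals a constant times $(1-x)^{n-t}(1+x)^{\beta+k}$, a genuine non-negative integer power of $(1-x)$. Differentiating $k-j$ times and multiplying by $(v_s^+)^{(j-1)}$, which is a constant times $(1-x)^{-\al+n-s-j+1}$, every resulting summand is a constant multiple of $(1-x)^{(\text{integer})-\al}(1+x)^{\beta+(\text{integer})}$; the same holds for the second half of \eqref{e-njacobisesqui} with the roles of $s$ and $t$ reversed. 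Expanding the analytic, nonvanishing factors $(1+x)^{\beta+(\text{integer})}$ in powers of $(1-x)$ about $x=1$, I would conclude that $[v_s^+,v_t^+]_n(x)$ admits near $x=1$ an expansion $\sum_m c_m (1-x)^{-\al+m}$ with $m\in\ZZ$. The decisive point is that \emph{every} exponent here is congruent to $-\al$ modulo $1$, and hence is never an integer.

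The conclusion then follows from the finiteness of the limit together with the asymptotic independence of distinct powers. Since the exponents $-\al+m$ are pairwise distinct and none equals $0$, no term can cancel another as $x\to 1^-$; if some $c_m$ with $-\al+m<0$ were nonzero, the most singular such term would dominate and force $[v_s^+,v_t^+]_n(x)\to\pm\infty$, contradicting the finiteness of the limit. Hence $c_m=0$ whenever $-\al+m<0$, every surviving term has $-\al+m>0$ and tends to $0$, and, there being no constant term, $[v_s^+,v_t^+]_n(1)=0$.

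The main obstacle is conceptual rather than computational: one must resist proving the vanishing summand-by-summand, because when $s+t\ge n+1$ the individual terms genuinely blow up and cancel only after summation, through an awkward falling-factorial identity. Packaging all the blow-up into a single Puiseux-type expansion whose every exponent is non-integral sidesteps this entirely and reduces the statement to the finiteness already guaranteed by the general theory. The hypothesis $\al\notin\ZZ$, equivalently the fact that \emph{both} functions are non-principal so that each summand retains exactly one unabsorbed $(1-x)^{-\al}$, is precisely what rules out a finite nonzero contribution. For the subcase $s+t\le n$ one does not even need finiteness: every exponent $-\al+m$ is then automatically positive, so that case could alternatively be dispatched by the exponent bookkeeping underlying the proof of Lemma \ref{l-overnv}.
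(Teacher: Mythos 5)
Your proposal is correct and is essentially the paper's own proof: the paper likewise substitutes the monomials into \eqref{e-njacobisesqui}, tracks powers of $(1-x)$ to see that every surviving term has exponent of the form $-\al+(\text{integer})$, groups the negative-exponent terms into $\sum_l \widetilde{h}_l(x)(1-x)^{-\al-l}$, and invokes the finiteness of the limit from Theorem \ref{t-limits} to force all such coefficients to vanish, so the remaining (positive, non-integer exponent) terms give limit $0$. Your only departures are cosmetic — expanding the $(1+x)^{\beta+(\text{integer})}$ factors into a single Puiseux-type series rather than keeping them as coefficient functions, and handling $s=t$ by skew-symmetry where the paper assumes $s\neq t$ without loss of generality.
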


\begin{proof}
We prove the result for the endpoint $x=1$ and $x=-1$ will follow analogously. Let $s,t\in\NN$, $s,t\leq n$ and without loss of generality assume that $s\neq t$. Deconstruct the expression for the sesquilinear form given by \eqref{e-njacobisesqui} into the terms
\begin{align*}
P(x):=\sum_{k=1}^n\sum_{j=1}^k\left\{a_k(x)\left[(1-x)^{-\al+n-t}\right]^{(k)}\right\}^{(k-j)}\left[(1-x)^{-\al+n-s}\right]^{(j-1)}, \nonumber\\
N(x):=\sum_{k=1}^n\sum_{j=1}^k(-1)^{k+j+1}\left\{a_k(x)\left[(1-x)^{-\al+n-s}\right]^{(k)}\right\}^{(k-j)}\left[(1-x)^{-\al+n-t}\right]^{(j-1)},
\end{align*}
so that $[v_s^+,v_t^+]_n(1)=\lim\ci{x\to 1^-}[P(x)+N(x)]$. We first analyze $\lim\ci{x\to 1^-}P(x)$ and calculate
\begin{align*}
    \lim_{x\to 1^-}P(x)&\approx \lim_{x\to 1^-}\sum_{k=1}^n\sum_{j=1}^k(-1)^{k+j}\left[(1+x)^{\beta+k}(1-x)^{n-t}\right]^{(k-j)}(1-x)^{-\al+n-s-j+1}(x) \nonumber\\
    &\approx \lim_{x\to 1^-}\sum_{k=1}^n\sum_{j=1}^k\sum_{i=0}^{k-j}(1+x)^{\beta+k-i}(1-x)^{-\al+2n-t-s+1-k+i},
\end{align*}
when $i\leq n-t$, otherwise the result is 0. If the exponent $-\al+2n-k-t-s+i+1$ is positive for a combination of $k,i$ then the limit of such a term is clearly 0. If the exponent is negative, group together like terms and observe that the minimum possible exponent occurs when $k=n$, $s=n$, $t=n-1$ and $i=0$, and is $-\al-n+2$. This allows for the decomposition
\begin{align}\label{e-lowpowers}
     \lim_{x\to 1^-}P(x)\approx\lim_{x\to 1^-}\sum_{l=0}^{n-2} h_l(x)(1-x)^{-\al-l},
\end{align}
with functions $h_l(x)$ that are constants times powers of $(1+x)$, but may be identically 0 (depending on the choice of $s$ and $t$) . A similar analysis clearly holds for $N(x)$, with the condition that $i\leq n-s$ naturally arising. The analog of equation \eqref{e-lowpowers} for $N(x)$ can be then be added to equation \eqref{e-lowpowers} so that
\begin{align*}
    [v_s^+,v_t^+]_n(1)=\lim_{x\to 1^-}P(x)+N(x)\approx\lim_{x\to 1^-}\sum_{l=0}^{n-2} \widetilde{h}_l(x)(1-x)^{-\al-l},
\end{align*}
for some functions $\widetilde{h}_l(x)$ that don't go to 0 in the limit unless they are identically 0. Indeed, assume that at least one $\widetilde{h}_l$ function is nonzero. The left hand side of this equation is finite and exists by Theorem \ref{t-limits}, and the right hand side consists of one or more nonzero terms which go to infinity at different rates in the limit. This is a contradiction to Theorem \ref{t-limits}, and thus each $\widetilde{h}_l$ function must be identically 0. The result follows for the endpoint $x=1$.
\end{proof}

\begin{cor}\label{c-uu}
Let $j,k\in\NN$ such that $j,k\leq n$. Then 
\begin{align*}
    [u_j^+,u_k^+]_n(1)=[u_j^-,u_k^-]_n(-1)=0.
\end{align*}
\end{cor}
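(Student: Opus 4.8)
The plan is to reduce the statement to Lemma \ref{l-vv} by using that each Gram--Schmidt output $u_k^{\pm}$ is built entirely out of the $v_i^{\pm}$ with $i\le k$, together with the (conjugate-)bilinearity of the sesquilinear form. Since Lemma \ref{l-vv} already establishes that any two of the $v$'s pair to zero, nothing beyond elementary linear algebra should remain.

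First I would make the structure of the $u$'s explicit. Reading off the recursion \eqref{e-gs} and arguing by induction on $k$, one sees that there are scalars $c_{ki}$ such that, near the endpoint $x=1$,
\begin{align*}
u_k^+=\sum_{i=1}^{k} c_{ki}\, v_i^+,
\end{align*}
with $c_{kk}=1/[\f_k^+,v_k^+]_n(1)$; the precise values of the remaining $c_{ki}$ will be irrelevant. The only feature I need is the inclusion $u_k^+\in\spa\{v_1^+,\dots,v_k^+\}$, which is immediate from \eqref{e-gs}.

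Next I would expand the pairing using the fact that $[\cdot,\cdot]_n$ is linear in its first slot and conjugate-linear in its second. For $j,k\le n$ this yields, at $x=1$,
\begin{align*}
[u_j^+,u_k^+]_n(1)=\sum_{i=1}^{j}\sum_{l=1}^{k} c_{ji}\,\overline{c_{kl}}\,[v_i^+,v_l^+]_n(1).
\end{align*}
Every index satisfies $i\le j\le n$ and $l\le k\le n$, so each factor $[v_i^+,v_l^+]_n(1)$ vanishes by Lemma \ref{l-vv}, and hence the entire double sum is $0$. (Interchanging the finite sum with the limit $x\to1^-$ is justified because each of the finitely many limits $[v_i^+,v_l^+]_n(1)$ exists, being $0$.) The computation at $x=-1$ is verbatim the same, using the $v_i^-$ and the $-1$ half of Lemma \ref{l-vv}.

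The substance of this corollary is therefore carried entirely by Lemma \ref{l-vv}; once the $v$'s are mutually ``orthogonal'' with respect to the sesquilinear form, their Gram--Schmidt combinations inherit this property automatically, exactly as in the classical Gram--Schmidt setting. Accordingly, there is no real obstacle here: the only point deserving a second glance is the conjugate in the second argument of the form, but since each $[v_i^+,v_l^+]_n(1)$ already equals $0$, the summation vanishes term by term regardless of whether the coefficients $c_{kl}$ are real, so even that subtlety dissolves.
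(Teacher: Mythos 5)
Your proposal is correct and is essentially the paper's own proof: the paper likewise observes that each $u_k^{\pm}$ is a finite linear combination of the $v_i^{\pm}$ with $i\leq k$ and then invokes Lemma \ref{l-vv} term by term. Your version merely spells out the expansion and the (harmless) conjugation detail explicitly, which the paper leaves implicit.
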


\begin{proof}
The Corollary follows from Lemma \ref{l-vv} because the function $u_k^{\pm}$ is just a finite linear combination of $v_i^{\pm}$ functions, where $i\leq k$.
\end{proof}

The functions $\left\{u_k^+\right\}_{k=1}^n$ and $\left\{u_k^-\right\}_{k=1}^n$ now produce a much simpler structure for the defect spaces when put into a matrix of sesquilinear forms, similar to that of \cite[Equation 4.22]{FFL}, as compared to the starting families $\left\{\psi_j^+\right\}_{j=1}^{n}$ and $\left\{\psi_j^-\right\}_{j=1}^{n}$. At the endpoint $x=1$, observe
\begin{align}\label{e-veffect}
\sbox0{$\begin{matrix}[\f_1,\f_1] & \dots & [\f_1,\f_n] \\ \vdots & \iddots & \vdots \\ [\f_n,\f_1] & \dots & [\f_n,\f_n]\end{matrix}$}
\sbox1{$\begin{matrix}[\f_1,u_n] & \dots & [\f_1,u_1] \\ \vdots & \iddots & \vdots \\ [\f_n,u_n] & \dots & [\f_n,u_1]\end{matrix}$}
\sbox2{$\begin{matrix}[u_n,\f_1] & \dots & [u_n,\f_n] \\ \vdots & \iddots & \vdots \\ [u_1,\f_1] & \dots & [u_1,\f_n]\end{matrix}$}
\sbox3{$\begin{matrix}[u_n,u_n] & \dots & [u_n,u_1] \\ \vdots & \iddots & \vdots \\ [u_1,u_n] & \dots & [u_1,u_1]\end{matrix}$}
\left(
\begin{array}{c|c}
\usebox{0}&\usebox{1}\vspace{-.35cm}\\\\
\hline\vspace{-.35cm}\\
 \vphantom{\usebox{0}}\usebox{2}&\usebox{3}
\end{array}
\right)
=
\sbox0{$\begin{matrix} &  & \\  & 0 & \\ & & \end{matrix}$}
\sbox1{$\begin{matrix}0 & & 1 \\  & \iddots &  \\ 1 & & 0\end{matrix}$}
\sbox2{$\begin{matrix}0 & & 1 \\  & \iddots &  \\ 1 & & 0\end{matrix}$}
\sbox3{$\begin{matrix} & & \\ & 0 & \\ & & \end{matrix}$}
\left(
\begin{array}{c|c}
\usebox{0}&\usebox{1}\vspace{-.35cm}\\\\
\hline\vspace{-.35cm}\\
 \vphantom{\usebox{0}}\usebox{2}&\usebox{3}
\end{array}
\right),
\end{align}
where each sesquilinear form is evaluated at $1$ and the ``$+$'' superscripts on functions are suppressed for the sake of simplicity. The upper-right and lower-left quadrants were determined by Theorem \ref{t-uinteract}. The upper-left and bottom-right quadrants are results of Corollaries \ref{c-buildingblocks} and \ref{c-uu}, respectively. Define two finite-dimensional subspaces of $\cD\ti{max}^{{\bf J},n}$:

\begin{align*} 
\widetilde{{\bf D}}^n_-:=\spa\left\{\left\{\f_j^-\right\}_{j=1}^{n},\left\{u_j^-\right\}_{j=1}^{n}\right\}, ~~\widetilde{{\bf D}}^n_+:=\spa\left\{\left\{\f_j^+\right\}_{j=1}^{n},\left\{u_j^+\right\}_{j=1}^{n}\right\}.
\end{align*}

\begin{cor}\label{c-newerdefectspaces}
The defect spaces $\cD_+^{{\bf J}, n}\dotplus\cD_-^{{\bf J}, n}=\widetilde{{\bf D}}^n_-\dotplus\widetilde{{\bf D}}^n_+$.
\end{cor}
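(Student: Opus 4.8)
The plan is to reduce the statement to the already-established basis result Corollary \ref{c-defectspacesbasis} by showing that passing from the families $\{\psi_j^\pm\}_{j=1}^n$ to the modified Gram--Schmidt families $\{u_j^\pm\}_{j=1}^n$ does not alter the spans, i.e.\ that $\widetilde{{\bf D}}^n_+={\bf D}^n_+$ and $\widetilde{{\bf D}}^n_-={\bf D}^n_-$ as subspaces of $\cD\ti{max}^{{\bf J},n}$. Once the two pairs of subspaces coincide, the direct-sum decomposition of Corollary \ref{c-defectspacesbasis} transfers verbatim, since equal subspaces are in direct sum exactly when the originals are.

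First I would record that the renumbering step changes nothing: the functions $v_k^+(x)=(1-x)^{-\al+n-k}$ near $x=1$, for $k=1,\dots,n$, are precisely the functions $\psi_j^+(x)=(1-x)^{-\al+j-1}$ near $x=1$, for $j=1,\dots,n$, under the index substitution $j=n-k+1$; the identical identification holds at $x=-1$ with $\beta$ in place of $\al$. Hence $\spa\{v_k^+\}_{k=1}^n=\spa\{\psi_j^+\}_{j=1}^n$ and $\spa\{v_k^-\}_{k=1}^n=\spa\{\psi_j^-\}_{j=1}^n$, so the subspaces ${\bf D}^n_\pm$ are untouched by the relabeling.

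Next I would argue that the Gram--Schmidt procedure \eqref{e-gs} yields a spanning set of the same subspace. Reading off \eqref{e-gs}, each $u_k^+$ is a linear combination of $v_1^+,\dots,v_k^+$ in which the coefficient of the top term $v_k^+$ equals $1/[\f_k^+,v_k^+]_n(1)$, a nonzero scalar; the construction is well-defined, as is confirmed a posteriori by $[\f_k^+,u_k^+]_n(1)=1$ in Theorem \ref{t-uinteract}. Therefore the matrix expressing $(u_1^+,\dots,u_n^+)$ in terms of $(v_1^+,\dots,v_n^+)$ is lower triangular with nonzero diagonal, hence invertible, and the $u_k^+$ span the same subspace as the $v_k^+$. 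Combined with the renumbering step this gives $\spa\{u_k^+\}_{k=1}^n=\spa\{\psi_j^+\}_{j=1}^n$, and the identical reasoning at $x=-1$ gives $\spa\{u_k^-\}_{k=1}^n=\spa\{\psi_j^-\}_{j=1}^n$. Since the $\f_j^\pm$ are common to both spanning sets, it follows that $\widetilde{{\bf D}}^n_+={\bf D}^n_+$ and $\widetilde{{\bf D}}^n_-={\bf D}^n_-$.

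Finally I would substitute these equalities into Corollary \ref{c-defectspacesbasis} to obtain
\begin{align*}
\cD_+^{{\bf J}, n}\dotplus\cD_-^{{\bf J}, n}={\bf D}^n_-\dotplus{\bf D}^n_+=\widetilde{{\bf D}}^n_-\dotplus\widetilde{{\bf D}}^n_+.
\end{align*}
The only point requiring any care is the invertibility of the Gram--Schmidt transformation, which rests on the nonvanishing of each diagonal form $[\f_k^+,v_k^+]_n(1)$ (and its analog at $-1$); this is exactly the condition that makes \eqref{e-gs} well-defined and is already guaranteed through Theorem \ref{t-uinteract}. Beyond tracking the triangular structure and the reindexing, I expect no genuine obstacle.
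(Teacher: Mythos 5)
Your proposal is correct and takes essentially the same route as the paper: the paper's proof consists of noting that $\spa\{u_j^{\pm}\}_{j=1}^n=\spa\{\psi_j^{\pm}\}_{j=1}^n$ and then invoking Corollary \ref{c-defectspacesbasis}. Your argument simply fills in the details behind that span equality (the reindexing $v_k^{\pm}=\psi_{n-k+1}^{\pm}$ and the invertible, lower-triangular change of basis produced by \eqref{e-gs}), which the paper leaves implicit.
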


\begin{proof}
Notice that
\begin{align*}
    \spa\left\{u_j^+\right\}_{j=1}^n=\spa\left\{\psi_j^+\right\}_{j=1}^n \text{  and  }\spa\left\{u_j^-\right\}_{j=1}^n=\spa\left\{\psi_j^-\right\}_{j=1}^n.
\end{align*}
The result thus follows from Corollary \ref{c-defectspacesbasis}.
\end{proof}

Taking a brief aside, it is now possible to discuss why the change from the $\psi_j$ functions to the $u_j$ functions was so essential. The $\psi_j$ functions plugged into the matrix of sesquilinear forms in equation \eqref{e-veffect} instead of the $u_j$'s yields a very different picture. The situation is described in \cite[Theorem 4.7]{FL}, with the key being that the upper-right and lower-left quadrants are merely upper triangular and not diagonal. This interaction between the functions $\psi_j$ and $\f_j$ in the sesquilinear form yields much more complicated operations that are not able to be easily analyzed. Indeed, the operations seem degenerate in some way, making it difficult to isolate the terms responsible for new behavior as $j$ increases.

The structure created by these new functions allows for the definition of operations which will be forged into a boundary triple. First, we investigate how these new functions act in the sesquilinear form.

\begin{lem}\label{l-gamma1correct}
Let $f\in\cD\ti{max}^{{\bf J},n}$. Then, for $j\in\NN$ and $j\leq n$, the representation
\begin{align}\label{e-newrep}
    [f,u_j^{\pm}]_n(\pm 1)=\lim_{x\to \pm 1^{\mp}}(\mp 1)^{j-1}\dfrac{f^{(j-1)}(x)}{(j-1)!}+\left\{\sum_{k=j}^{2n-1} h_{j,k}(x)f^{(k)}(x)\right\},
\end{align}
holds for some functions $h_{j,k}(x)$. 
\end{lem}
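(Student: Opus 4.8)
The plan is to prove the representation at $x=1$ for the ``$+$'' functions; the endpoint $x=-1$ is identical after exchanging $(1-x)$ and $(1+x)$, which is what produces the sign $(\mp 1)^{j-1}$. Throughout write $\kappa_j:=(-1)^{j-1}/(j-1)!$.

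The first step is to fix the value of the left-hand side. By von Neumann's formula (Theorem \ref{t-neumanndecomp}) together with Corollary \ref{c-newerdefectspaces}, every $f\in\cD\ti{max}^{{\bf J},n}$ decomposes as $f=f_{\min}+f_-+f_+$ with $f_{\min}\in\cD\ti{min}^{{\bf J},n}$, $f_-\in\widetilde{{\bf D}}^n_-$ and $f_+\in\widetilde{{\bf D}}^n_+$; near $x=1$ one has $f_-\equiv 0$ and $f_+=\sum_{i=1}^n(\alpha_i\f_i^++\gamma_i u_i^+)$ for scalars $\alpha_i,\gamma_i$. Since $f_-$ vanishes near $1$, $u_j^+$ vanishes near $-1$, and $[f_{\min},u_j^+]_n\big|_{-1}^1=0$ by the definition of the minimal domain, bilinearity together with the orthogonality relations $[\f_i^+,u_j^+]_n(1)=\delta_{ij}$ (Theorem \ref{t-uinteract}) and $[u_i^+,u_j^+]_n(1)=0$ (Corollary \ref{c-uu}) yields $[f,u_j^+]_n(1)=\alpha_j$. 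So the lemma amounts to producing functions $h_{j,k}$ for which the displayed limit also equals $\alpha_j$.

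Next I would analyse the candidate expression $E_f(x):=\kappa_j f^{(j-1)}(x)+\sum_{k=j}^{2n-1}h_{j,k}(x)f^{(k)}(x)$ through the same splitting. The regular part $\sum_i\alpha_i\f_i^+$ behaves like $\sum_i\alpha_i(1-x)^{i-1}$, so $\kappa_j\big(\sum_i\alpha_i\f_i^+\big)^{(j-1)}\to\kappa_j\alpha_j(-1)^{j-1}(j-1)!=\alpha_j$, while the terms with $i>j$ and all $h_{j,k}$-terms vanish \emph{provided} each $h_{j,k}=O((1-x)^{k-j+1})$. The minimal part contributes nothing: by the regularity furnished by \cite{FL} (consistent with Theorem \ref{t-maxdomaindecomp}) a function $f_{\min}\in\cD\ti{min}^{{\bf J},n}$ satisfies $f_{\min}^{(p)}(x)=O\big((1-x)^{n-\al-p}\big)$, so $\kappa_j f_{\min}^{(j-1)}$ and each $h_{j,k}f_{\min}^{(k)}$ are $O\big((1-x)^{n-\al-j+1}\big)\to 0$ because $j\le n$. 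Thus $E_f(x)\to\alpha_j$ as soon as the \emph{singular} part $\sum_i\gamma_i u_i^+$ is annihilated, i.e.\ as soon as
\[
\kappa_j (u_i^+)^{(j-1)}(x)+\sum_{k=j}^{2n-1}h_{j,k}(x)(u_i^+)^{(k)}(x)\longrightarrow 0,\qquad i=1,\dots,n.
\]

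The construction of the $h_{j,k}$ is therefore the heart of the matter, and I expect it to be the main obstacle. Writing each $u_i^+$ near $1$ as a triangular combination of the pure powers $v_m^+=(1-x)^{-\al+n-m}$, it suffices to cancel the singular part for each $v_m^+$ separately. The monomial ansatz $h_{j,k}(x)=\eta_k(1-x)^{k-j+1}$ has the decisive effect that \emph{every} summand $h_{j,k}(v_m^+)^{(k)}$ collapses onto the single power $(1-x)^{-\al+n-m-j+1}$, so the requirement above reduces to the scalar linear system
\[
\frac{\kappa_j(-1)^{j-1}}{\Gamma(-\al+n-m-j+2)}+\sum_{k=j}^{2n-1}\frac{(-1)^k\,\eta_k}{\Gamma(-\al+n-m-k+1)}=0,\qquad m=1,\dots,n,
\]
in the $2n-j\ge n$ unknowns $\eta_k$. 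Since $\al\notin\ZZ$ every Gamma-value is finite and nonzero, and using $\Gamma(z)/\Gamma(z-k)=\prod_{t=1}^{k}(z-t)$ the coefficient matrix is seen to be a generalized Vandermonde built from polynomials of distinct degrees evaluated at the distinct points $-\al+n-m+1$; establishing that it has full row rank (hence solvability) is the one genuinely technical point, after which the chosen $h_{j,k}$ automatically have the form $O((1-x)^{k-j+1})$ needed above. Combining the three limits then gives $E_f(x)\to\alpha_j=[f,u_j^+]_n(1)$, which is \eqref{e-newrep}; the existence of this limit is in any case guaranteed by Theorem \ref{t-limits}. The delicate cancellation of singular contributions carried out here is precisely the phenomenon that the diagonalized matrix of sesquilinear forms in \eqref{e-veffect} was arranged to control.
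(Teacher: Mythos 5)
Your strategy is genuinely different from the paper's, so let me first say how: in the paper the functions $h_{j,k}$ are never chosen, they are read off by rearranging the sesquilinear form \eqref{e-njacobisesqui}, so that $[f,u_j^+]_n(1)=\lim_{x\to 1^-}\sum_{l=0}^{2n-1}h_{j,l}(x)f^{(l)}(x)$ holds tautologically for \emph{every} $f\in\cD\ti{max}^{{\bf J},n}$ (minimal part included, with no asymptotic analysis of $f$ ever required), and the entire proof consists in showing, by induction through the Gram--Schmidt relations \eqref{e-gs}, that $h_{j,l}\equiv 0$ for $l<j-1$ while $h_{j,j-1}\equiv(-1)^{j-1}/(j-1)!$ identically, not merely in the limit. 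You instead posit monomial coefficients $h_{j,k}(x)=\eta_k(1-x)^{k-j+1}$ and verify the limit on the decomposition $f=f_{\min}+f_-+f_+$. Your identification $[f,u_j^+]_n(1)=\alpha_j$ and your treatment of the $\f_i^+$- and $u_i^+$-parts are sound in outline (granting the full-rank claim for the generalized Vandermonde system, which you leave unproved).

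The genuine gap is the minimal-domain part, and it is not a technicality. You need $(-1)^{j-1}f_{\min}^{(j-1)}(x)/(j-1)!+\sum_{k\ge j}h_{j,k}(x)f_{\min}^{(k)}(x)\to 0$, and you justify this with the pointwise bound $f_{\min}^{(p)}(x)=O\big((1-x)^{n-\al-p}\big)$, attributed to \cite{FL} and Theorem \ref{t-maxdomaindecomp}. No such bound exists in either source: Theorem \ref{t-maxdomaindecomp} gives $f^{(p)}(x)=O\big((1-x)^{-\al-p}\big)$, only for $p\le n$ and only for the maximal domain, and for $n<p\le 2n-1$ it controls the combinations in \eqref{e-decompresult2}, never individual derivatives. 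Worse, the bound you invoke is false: membership in $\cD\ti{min}$ is defined by vanishing of boundary forms and imposes no pointwise decay on derivatives, which may oscillate. Already for $n=1$, the function $f(x)=(1-x)^{1-\al}\sin\big((1-x)^{-\eps}\big)$, smoothly cut off away from $x=1$, lies in $\cD\ti{min}$ whenever $0<\eps<(1-\al)/4$ (both boundary forms at $1$ vanish and $\ell_{\al,\beta}[f]=O\big((1-x)^{-\al-2\eps}\big)\in L^2_{\al,\beta}(-1,1)$), yet $f'$ contains the term $\eps(1-x)^{-\al-\eps}\cos\big((1-x)^{-\eps}\big)$ and hence is not $O\big((1-x)^{-\al}\big)$. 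Relatedly, the existence of the limit of your expression is \emph{not} supplied by Theorem \ref{t-limits}, which applies to sesquilinear forms $[f,g](x)$ and not to your custom sum; in the paper's proof existence is free precisely because the sum there \emph{is} $[f,u_j^+]_n(x)$. This is exactly the pitfall the paper flags immediately after its proof --- coefficients tending to $0$ are insufficient because $f^{(l)}$ may blow up at an unknown rate --- and it is why the paper works with coefficients that are identically constant rather than coefficients engineered to vanish in a limit.
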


\begin{proof}
We prove the result for the endpoint $x=1$ and $x=-1$ will follow analogously. To this end, the $+$ superscript on functions and the $n$ subscript on the sesquilinear form will be suppressed during the proof. Let $j\in\NN$ such that $j\leq n$. It is clear from the definition of the sesquilinear form in equation \eqref{e-njacobisesqui} that the finitely many terms can be written out and rearranged so that
\begin{align}\label{e-utaylor}
    [f,u_j](1)=\lim_{x\to 1^-}\sum_{l=0}^{2n-1}h_{j,l}(x)f^{(l)}(x),
\end{align}
for some functions $h_{j,l}(x)$ which may go to infinity in the limit. 

The claim is then that both $h_{j,l}(x)=0$ for $l<j-1$ and $h_{j,j-1}(x)=(-1)^{j-1}/(j-1)!$. These properties follow from the modified Gram--Schmidt procedure carried out in equation \eqref{e-gs}. Notice that it can be easily shown that the limits of these functions have the desired values using equation \eqref{e-veffect} and induction. However, the Gram--Schmidt procedure from equation \eqref{e-gs} will show the stronger fact that the functions are identically the desired constants. Begin by defining an analogous decomposition to equation \eqref{e-utaylor} for the functions $v_j$:
\begin{align}\label{e-vtaylor}
    [f,v_j](1)=\lim_{x\to 1^-}\sum_{l=0}^{2n-1}g_{j,l}(x)f^{(l)}(x).
\end{align}

Let $j=1$. By equation \eqref{e-gs} we have
\begin{align*}
    [\f_1,u_1](1)=\lim_{x\to 1^-}\left\{\dfrac{[\f_1,v_1](x)}{[\f_1,v_1](x)}\right\}=\lim_{x\to 1^-}\dfrac{g_{1,0}(x)}{g_{1,0}(x)}=\lim_{x\to 1^-}1\cdot{\f_1}^{(0)}(x),
\end{align*}
so that $h_{1,0}(x)=1$. Theorem \ref{t-limits} was used as the definition for the sesquilinear form in the denominator, and this allowed the limit to be pulled outside of the calculations.

Proceed by induction on $j$ to prove the two claims. Let $j=2$ be the base case. Again, using equation \eqref{e-gs} it can be written
\begin{align*}
 [\f_1,u_2](1)=\lim_{x\to 1^-}\left\{\left[g_{2,0}(x)-\dfrac{g_{2,0}(x)h_{1,0}(x)}{h_{1,0}(x)}\right]/g_{2,1}(x)\right\}=\lim_{x\to 1^-}0\cdot{\f_1}^{(0)}(x),
 \end{align*}
so that $h_{2,0}(x)=0$. The fact that $h_{2,1}(x)=-1$ follows analogously by calculating
\begin{align*}
    [\f_2,u_2](1)=\lim_{x\to 1^-}1=\lim_{x\to 1^-}\dfrac{-1}{1!}\cdot\f_2^{(1)}(x).
\end{align*}
Assume the inductive hypothesis that $h_{j_1,l}(x)=0$ for $l<j_1-1$ and $h_{j_1,j_1-1}(x)=1$ for arbitrary $2<j_1<n$. Consider $j=j_1+1$. Then for each $i\in\NN$ such that $i\leq j_1$
\begin{align*}
    [\f_i,u_{j_1+1}](1)&=\lim_{x\to 1^-}\left\{[\f_i,v_{j_1+1}](x)-\sum_{m=1}^{j_1}\dfrac{[\f_m,v_{j_1+1}](x)}{[\f_m,u_m](x)}[\f_i,u_m](x)\right\}/[\f_{j_1+1},v_{j_1+1}](x) \\
    &=\lim_{x\to 1^-}\left\{g_{j_1+1,i-1}-\dfrac{g_{j_1+1,i-1}(x)}{g_{i,i-1}(x)}g_{i,i-1}(x)\right\}/g_{j_1+1,j_1}(x) \\
    &=\lim_{x\to 1^-}0\cdot{\f_i^+}^{(i-1)}(x),
\end{align*}
so that $h_{j_1+1,i-1}(x)=0$. The inductive hypothesis was used in reducing the sum to one term. The fact that $h_{j_1+1,j_1}(x)=(-1)^{j-1}/(j-1)!$ follows analogously by calculating
\begin{align*}
    [\f_j,u_j](1)=\lim_{x\to 1^-}1=\lim_{x\to 1^-}\dfrac{(-1)^{j-1}}{(j-1)!}\cdot\f_j^{(j-1)}(x).
\end{align*}
The principle of mathematical induction then says that, for all $j\in\NN$ such that $j\leq n$, both $h_{j,l}(x)=0$ for $l<j-1$ and $h_{j,j-1}(x)=1$. The Lemma has thus been proven for the endpoint $x=1$.
\end{proof}

As mentioned in the proof, the Lemma is stronger than saying that the functions $h_{j,l}(x)$=0, for $l<j-1$, and $h_{j,j-1}(x)=1$ in their limits. A priori, the functions $h_{j,l}$ going to 0 in the limit when $l<j-1$ is insufficient, as $f^{(l)}(x)$ may go to infinity at a faster rate for some $f\in\cD\ti{max}^{{\bf J},n}$. Corollary \ref{c-newerdefectspaces} and Theorem \ref{t-neumanndecomp} allow any function $f\in\cD\ti{max}^{{\bf J},n}$ to be written as 
\begin{equation}\label{e-newneumanndecomp}
\begin{aligned}
f=f_0&+c_1\f_1^++\dots+c_n\f_n^++c_{n+1}\f_1^-+\dots+c_{2n}\f_n^- \\
&+c_{2n+1}u_1^+\dots+c_{3n}u_n^++c_{3n+1}u_1^-+\dots+c_{4n}u_n^-, \nonumber
\end{aligned}
\end{equation}
for some constants $c_1,\dots,c_{4n}$ that are determined by $f$ and $f_0\in\cD\ti{min}^{{\bf J},n}$. The definition of the minimal domain and equation \eqref{e-veffect} thus say that 
\begin{align*}
    [f,u_j^+]_n(1)=c_j[\f_j^+,u_j^+]_n(1)=c_j.
\end{align*}
Analogs of this reasoning hold for all of the operations in the matrix of equation \eqref{e-veffect}. 

Lemma \ref{l-gamma1correct} defines some regularizations of quasi-derivatives but these are not assumed to be unique, just as the boundary triple in the following Subsection will not be uniquely to the naturally generated self-adjoint extension. Unfortunately, it is unknown if anything can be said of the remaining functions $h_{j,k}(x)$ in Lemma \ref{l-gamma1correct}.

\subsection{A Natural Boundary Triple}\label{ss-natural}

A representation like that of Lemma \ref{l-gamma1correct} allows for the explicit construction of a boundary triple using quasi-derivatives as a guide, via equation \eqref{e-sesquidecomp}. In order for the new operations to match Definition \ref{d-quasiderivgeneral}, some slight modifications are needed. For $f\in\cD\ti{max}^{{\bf J},n}$ and $j\in\NN$ such that $j\leq n$, define the operations
\begin{equation}\label{e-new1ops}
\begin{aligned}
f^{\{j-1\}}(1)&:=(-1)^{j-1}(j-1)!\cdot[f,u_j^+]_n(1), \\
f^{\{j-1\}}(-1)&:=(j-1)!\cdot[f,u_j^-]_n(-1).
\end{aligned}
\end{equation}

Finally, we can define the maps $\Gamma_0,\Gamma_1:\cD\ti{max}^{{\bf J},n}\to \CC^{2n}$ via
\begin{equation}\label{e-newbt}
\begin{aligned}
\Gamma_0 f:=\left(
\begin{array}{c}
-f^{[n]}(-1) \\
\vdots \\
-f^{[2n-1]}(-1) \\
f^{[n]}(1) \\
\vdots \\
f^{[2n-1]}(1)
\end{array} 
\right), \hspace{.2cm}
\Gamma_1 f:=\left(
\begin{array}{c}
f^{\{n-1\}}(-1) \\
\vdots \\
f^{\{0\}}(-1) \\
f^{\{n-1\}}(1) \\
\vdots \\
f^{\{0\}}(1)
\end{array} 
\right),
\end{aligned}
\end{equation}

where the quasi-derivatives in the definition of $\Gamma_0$ are given by Lemma \ref{l-gamma0correct}.

\begin{theo}\label{t-newbt}
Let $\Gamma_0$ and $\Gamma_1$ be given by equation \eqref{e-newbt}. Then $\{\CC^{2n},\Gamma_0,\Gamma_1\}$ is a boundary triple for  $\cD\ti{max}^{{\bf J},n}$.
\end{theo}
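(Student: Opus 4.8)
The plan is to establish the two defining conditions of a boundary triple from Definition \ref{d-bt}: the abstract Green identity \eqref{e-btgreens} and the surjectivity of $\Gamma=\{\Gamma_0,\Gamma_1\}$ onto $\CC^{2n}\times\CC^{2n}$. First I would note that $\Gamma_0$ and $\Gamma_1$ are well-defined linear maps on all of $\cD\ti{max}^{{\bf J},n}$: the entries of $\Gamma_0$ are the genuine quasi-derivatives $f^{[n]}(\pm1),\dots,f^{[2n-1]}(\pm1)$, which are finite by Theorem \ref{t-maxdomaindecomp} and identified in Lemma \ref{l-gamma0correct}, while the entries of $\Gamma_1$ are the finite constant multiples $f^{\{j-1\}}(\pm1)$ of the sesquilinear forms $[f,u_j^{\pm}]_n(\pm1)$, which exist and are finite by Theorem \ref{t-limits}.

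The engine for both conditions is a coordinatization of the boundary values. By Corollary \ref{c-newerdefectspaces} and von Neumann's formula (Theorem \ref{t-neumanndecomp}), every $f\in\cD\ti{max}^{{\bf J},n}$ may be written $f=f_0+\sum_{j=1}^n(a_j^+\f_j^++b_j^+u_j^++a_j^-\f_j^-+b_j^-u_j^-)$ with $f_0\in\cD\ti{min}^{{\bf J},n}$. Since $\f_k^+,u_k^+$ vanish identically near $-1$, the forms $[\,\cdot\,,\f_k^+]_n(-1)$ and $[\,\cdot\,,u_k^+]_n(-1)$ are zero; combined with $[f_0,h]_n\big|_{-1}^1=0$ for $f_0\in\cD\ti{min}^{{\bf J},n}$ and any $h\in\cD\ti{max}^{{\bf J},n}$, this forces the single-endpoint values $[f_0,\f_k^+]_n(1)=[f_0,u_k^+]_n(1)=0$, and symmetrically at $-1$. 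Reading the remaining pairings off Corollary \ref{c-buildingblocks}, Theorem \ref{t-uinteract}, and Corollary \ref{c-uu} (the block structure of \eqref{e-veffect}), together with the antisymmetry $[g,f]_n=-\overline{[f,g]_n}$ of the real-coefficient form, I would record $[f,u_k^+]_n(1)=a_k^+$ and $[f,\f_k^+]_n(1)=-b_k^+$, and analogously at $-1$. Thus $\Gamma_1 f$ recovers the $\f$-coefficients of $f$ and $\Gamma_0 f$ its $u$-coefficients, each through the explicit nonzero constants of Lemma \ref{l-gamma0correct} and \eqref{e-new1ops}.

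With these coordinates the Green identity reduces to a finite computation. Writing $g=g_0+\sum_j(c_j^{\pm}\f_j^{\pm}+d_j^{\pm}u_j^{\pm})$ and using the same pairings, the boundary form evaluates to $[f,g]_n(1)=\sum_j(a_j^+\overline{d_j^+}-b_j^+\overline{c_j^+})$, and likewise at $-1$, so that the left side of \eqref{e-btgreens}, equal to $[f,g]_n(1)-[f,g]_n(-1)$ by \eqref{e-sesquidecomp}, is expressed purely through the coefficients. On the right side I would substitute $f^{[2n-s]}(1)=\tfrac{(-1)^{s+1}}{(s-1)!}b_s^+$ and $f^{\{s-1\}}(1)=(-1)^{s-1}(s-1)!\,a_s^+$ into $\langle\Gamma_1 f,\Gamma_0 g\rangle_{\CC^{2n}}-\langle\Gamma_0 f,\Gamma_1 g\rangle_{\CC^{2n}}$; after reindexing by $s=n-i$, the reciprocal factorials cancel, the reversed listings in \eqref{e-newbt} align corresponding entries, the $+1$ block assembles into $[f,g]_n(1)$, and the sign choices on the $-1$ block produce $-[f,g]_n(-1)$. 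Matching the two sides proves \eqref{e-btgreens}.

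Surjectivity I would obtain from the same coordinate formulas. Since $\Gamma_0$ and $\Gamma_1$ annihilate $\cD\ti{min}^{{\bf J},n}$, the map $\Gamma$ descends to the quotient $\cD\ti{max}^{{\bf J},n}/\cD\ti{min}^{{\bf J},n}$, which has dimension $4n$ because the defect indices are $(2n,2n)$. Each $\f_k^{\pm}$ contributes a single nonzero entry to $\Gamma_1$ and nothing to $\Gamma_0$, while each $u_k^{\pm}$ contributes a single nonzero entry to $\Gamma_0$ and nothing to $\Gamma_1$, and these $4n$ nonzero entries occupy distinct coordinates of $\CC^{2n}\times\CC^{2n}$; hence the induced map is injective between $4n$-dimensional spaces, so it is an isomorphism and $\Gamma$ is surjective. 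I expect the only genuine difficulty to be the sign and index bookkeeping: one must verify that the minus signs attached to the $-1$ entries in \eqref{e-newbt}, the factors $(-1)^{j-1}(j-1)!$ in \eqref{e-new1ops}, and the reversed ordering of the $f^{\{\cdot\}}$ relative to the $f^{[\cdot]}$ conspire so that the reciprocal constants cancel and the two endpoint blocks assemble with the correct overall sign into $[f,g]_n(1)-[f,g]_n(-1)$. No deep estimate is needed beyond the finiteness already furnished by Theorem \ref{t-limits}, but a single misplaced sign would destroy the identity, so this is where care is required.
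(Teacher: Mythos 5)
Your proof is correct, but it follows a genuinely different route from the paper's own proof. The paper establishes the Green identity analytically: it splits each entry of $\Gamma_1$ via Lemma \ref{l-gamma1correct} into a true quasi-derivative plus a correction sum $\sum_l h_{j,l}(x)f^{(l)}(x)$, lets the quasi-derivative parts reproduce $[f,g]_n\big|_{-1}^1$ through \eqref{e-sesquidecomp}, and then shows the corrections contribute nothing by testing all combinations of basis functions from \eqref{e-newneumanndecomp} case by case ($u$ against $u$, $\f$ against $\f$, mixed, and minimal); surjectivity is then read off from \eqref{e-veffect}. You instead bypass Lemma \ref{l-gamma1correct} altogether: using only the pairing table \eqref{e-veffect}, the antisymmetry $[g,f]_n=-\overline{[f,g]_n}$, and the one-sided support of the basis functions, you show that $\Gamma_0 f$ and $\Gamma_1 f$ are (up to the explicit constants of Lemma \ref{l-gamma0correct} and \eqref{e-new1ops}) precisely the $u$- and $\f$-coefficients of $f$ in the von Neumann decomposition, whence both sides of \eqref{e-btgreens} collapse to the same finite coefficient sum; surjectivity follows from your dimension count on $\cD\ti{max}^{{\bf J},n}/\cD\ti{min}^{{\bf J},n}$ rather than the paper's direct appeal to \eqref{e-veffect}. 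This is exactly one of the ``shorter'' proofs whose existence the paper acknowledges in the remark after Theorem \ref{t-newbt}: the paper's longer route buys intuition about how the unidentified correction functions $h_{j,l}$ interact with the decomposition, while yours buys brevity and makes the duality between the two families transparent. Two points of care: your endpoint-wise formula $[f,g]_n(1)=\sum_j(a_j^+\overline{d_j^+}-b_j^+\overline{c_j^+})$ tacitly needs $[f_0,g_0]_n(1)=0$ for minimal $f_0,g_0$, which your cutoff-free argument establishes only for pairings against the one-sided basis functions; but since $[f_0,g_0]_n(1)=[f_0,g_0]_n(-1)$, these terms cancel in the difference $[f,g]_n(1)-[f,g]_n(-1)$, which is all the identity requires. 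Also, your sign bookkeeping (taking $[u_k^+,\f_k^+]_n(1)=-1$ by antisymmetry, so that $f^{[2n-s]}(1)=\tfrac{(-1)^{s+1}}{(s-1)!}b_s^+$) is the internally consistent choice, and with it the factorial and sign factors do cancel exactly as you anticipated.
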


\begin{proof}
Let $f,g\in\cD\ti{max}^{{\bf J},n}$. We aim to show that 
\begin{align*}
    [f,g]_n\bigg|_{-1}^1=[f,g]_n(1)-[f,g]_n(-1)=\langle\Gamma_1f,\Gamma_0g\rangle-\langle\Gamma_0f,\Gamma_1g\rangle.
\end{align*}
Lemma \ref{l-gamma1correct} and equation \eqref{e-new1ops} yield, for $j\in\NN$ such that $j\leq n$,
\begin{equation}\label{e-part1proofsetup}
\begin{aligned}
f^{\{j-1\}}(1)&=(-1)^{j-1}(j-1)!\cdot[f,u_j^+]_n(1)=\lim_{x\to \pm 1^-}f^{[j-1]}(x)+\left\{\sum_{l=j}^{2n-1} h_{j,l}(x)f^{(l)}(x)\right\}, \\
f^{\{j-1\}}(-1)&=(j-1)!\cdot[f,u_j^-]_n(1)=\lim_{x\to \pm -1^+}f^{[j-1]}(x)+\left\{\sum_{l=j}^{2n-1} \widetilde{h}_{j,l}(x)f^{(l)}(x)\right\},
\end{aligned}
\end{equation}
for some functions $h_{j,l}(x)$ and $\widetilde{h}_{j,l}(x)$. The inner product in $\CC^{2n}$ is a linear operator, so begin by considering only the quasi-derivative term in $\Gamma_1$. With this truncated $\Gamma_1$, it is clear that 
\begin{align*}
    \langle\Gamma_1f,\Gamma_0g\rangle-\langle\Gamma_0f,\Gamma_1g\rangle=\sum_{k=1}^n\left\{f^{[k-1]}\overline{g}^{[2n-k]}-f^{[2n-k]}\overline{g}^{[k-1]}\right\}.
\end{align*}
Equation \eqref{e-sesquidecomp} says that the sesquilinear form evaluated at $x=1$ is created, and the minus signs in all of the terms at $x=-1$ in $\Gamma_0$ ensure that $-[f,g]_n(-1)$ is also generated. Now consider $\Gamma_1$ to be only the extra summation from equation \eqref{e-part1proofsetup}. We analyze the endpoint $x=1$ and our conclusions will hold analogously at $x=-1$. First, notice that if $f\in\cD\ti{max}^{{\bf J},n}$ is such that $f^{\{j-1\}}(1)=0$ for some $j\in\NN$ with $j\leq n$, then
\begin{equation}\label{e-getting0}
\begin{aligned}
    0=&\lim_{x\to \pm 1^-}f^{[j-1]}(x)+\left\{\sum_{l=j}^{2n-1} h_{j,l}(x)f^{(l)}(x)\right\} \text{   implies } \\
    &\lim_{x\to \pm 1^-}-f^{[j-1]}(x)=\lim_{x\to \pm 1^-}\sum_{l=j}^{2n-1} h_{j,l}(x)f^{(l)}(x).
\end{aligned}
\end{equation}
The boundary triple with the truncated $\Gamma_1$ now yields
\begin{equation}\label{e-everythingelse}
\begin{aligned}
    \langle\Gamma_1f,\Gamma_0g\rangle-\langle\Gamma_0f,\Gamma_1g\rangle=\lim_{x\to 1^-}\sum_{k=1}^n & g^{[2n-k]}(x)\left\{\sum_{l=k}^{2n-1} y_{k,l}(x)f^{(l)}(x)\right\}\\
    &-f^{[2n-k]}(x)\left\{\sum_{l=k}^{2n-1} z_{k,l}(x)g^{(l)}(x)\right\},
\end{aligned}
\end{equation}
for some functions $y_{k,l}(x)$ and $z_{k,l}(x)$. 

The claim is that equation \eqref{e-everythingelse} is equal to 0 for all $f,g\in\cD\ti{max}^{{\bf J},n}$. However, equation \eqref{e-newneumanndecomp} implies that it is enough to consider $f$ and $g$ taken from the families $\{\f_i^+\}_{i=1}^n$ and $\{u_i^+\}_{i=1}^n$. If both $f$ and $g$ are constant multiples of functions from $\{u_i^+\}_{i=1}^n$. Then equation \eqref{e-everythingelse} can be simplified by equation \eqref{e-getting0} to
\begin{equation*}
\begin{aligned}
    \langle\Gamma_1f,\Gamma_0g\rangle-\langle\Gamma_0f,\Gamma_1g\rangle&=-\lim_{x\to 1^-}\sum_{k=1}^n g^{[2n-k]}(x)f^{[k-1]}(x)-f^{[2n-k]}(x)g^{[k-1]} \\
    &=-[f,g]_n(1)=0.
\end{aligned}
\end{equation*}

Without loss of generality, consider the case where $f(x)=\f_s^+(x)$ and $g(x)=\f_t^+(x)$, for some $s,t\in\NN$ and $s,t\leq n$. Then
\begin{equation*}
\begin{aligned}
    \lim_{x\to 1^-}\sum_{l=k}^{2n-1} y_{k,l}(x)\left[\f_s^+\right]^{(l)}(x)=
    \begin{cases}
    \lim_{x\to 1^-}-\left[\f_s^+\right]^{[k-1]}(x) & \text{ for }k\neq s, \\
    \lim_{x\to 1^-}(-1)^{s-1}(s-1)!-\left[\f_s^+\right]^{[s-1]} & \text{ for }k=s,
    \end{cases}
\end{aligned}
\end{equation*}
where equation \eqref{e-getting0} can be easily modified to show the case $k=s$. In both cases the limit is clearly $0$. An analog holds for $g(x)$. As the limits of $f^{[2n-k]}(x)$ and $g^{[2n-k]}(x)$ are also zero by Corollary \ref{c-buildingblocks}, we conclude that equation \eqref{e-everythingelse} is 0 for such functions. 

Finally, without loss of generality, consider the case where $f(x)=u_t^+(x)$ and $g(x)=\f_s^+(x)$ for $s\neq t$. Then, using the above simplifications we have
\begin{align*}
    \langle\Gamma_1f,\Gamma_0g\rangle-\langle\Gamma_0f,\Gamma_1g\rangle&=-\lim_{x\to 1^-}\sum_{k=1}^n [u_t^+]^{[2n-k]}(x)[\f_s^+]^{[k-1]}(x)-[\f_s^+]^{[2n-k]}(x)[u_t^+]^{[k-1]}(x) \\
    &-\lim_{x\to 1^-}[u_t^+]^{[2n-s]}(-1)^{s-1}(s-1)!=-[\f_s^+,u_t^+]_n(1)-0=0.
\end{align*}
The case where $s=t$ similarly yields
\begin{align*}
    \langle\Gamma_1f,\Gamma_0g\rangle-\langle\Gamma_0f,\Gamma_1g\rangle&=-[\f_s^+,v_t^+]_n(1)-\lim_{x\to 1^-} \dfrac{(-1)^s}{(s-1)!}(-1)^{s-1}(s-1)!=-1-(-1)=0.
\end{align*}
It is clear that if $f$, $g$, or both, belong to the minimal domain the result is also 0. The claim that equation \eqref{e-everythingelse} is equal to 0 for all $f,g\in\cD\ti{max}^{{\bf J},n}$ has thus been shown, and the desired form at the endpoint $x=1$ follows.

It remains only to show that the mappings $\Gamma_0$ and $\Gamma_1$ are surjective onto $\CC^{2n}$. However, equation \eqref{e-veffect} clearly shows that linear combinations of the family $\{v_j^{\pm}\}_{j=1}^n$ will take all possible values in $\CC^{2n}$ under the map $\Gamma_0$ and linear combinations of the family $\{\f_j^{\pm}\}_{j=1}^n$ will take all possible values in $\CC^{2n}$ under the map $\Gamma_1$. Thus, we conclude that $\{\CC^{2n},\Gamma_0,\Gamma_1\}$ is a boundary triple for $\cD\ti{max}^{{\bf J},n}$.
\end{proof}

It should be noted that there are many possible ways to prove Theorem \ref{t-newbt}, some of which are shorter. The advantage of this proof is it shows how the decomposition from equation \eqref{e-newneumanndecomp} interacts with the extra summations from Lemma \ref{l-gamma1correct}, which is valuable for building intuition.

\section{Weyl m-Functions}\label{s-mfunctions}

The constructed boundary triple in equation \eqref{e-newbt} allows for the determination of explicit Weyl $m$-functions using Subsection \ref{ss-bt}. In particular, four examples will be computed: the two natural self-adjoint extensions which have the kernels of $\Gamma_0$ and $\Gamma_1$ as their domains, separated boundary conditions, and an analog of periodic boundary conditions.

It is first necessary to make some comments about solutions to the differential equation $\ell_{{\bf J}}^n$ given by equation \eqref{e-njacobi}. The deficiency indices of the associated minimal domain are $(2n,2n)$ so given a $\la\in\CC$, there are $2n$ linearly independent solutions to the equation
\begin{align}\label{e-neigenvalue}
    \ell_{{\bf J}}^n[f]=\la f.
\end{align}
However, these solutions can be defined via solutions to the uncomposed equation
\begin{align*}
    \ell_{{\bf J}}[f]=\la_j f,
\end{align*}
where each $\{\la_j\}_{j=1}^n$ is distinct and $\la_j^n=\la$. To each of these associated $n$ equations there are two solutions guaranteed due to the fact that $\ell_{{\bf J}}$ is in the limit-circle case at both endpoints. Denote these two solutions by $f_j$ and $g_j$ and decompose $\la_j=\mu_j(\mu_j+\al+\beta+1)$ so that the equation is in the usual format. Using the change of variables $t=(1-x)/2$, so that $1-t=(1+x)/2$, the solutions to equation \eqref{e-neigenvalue} are written as
\begin{equation}\label{e-nsolution}
\begin{aligned}
f_j(t):=
\left.\begin{cases}
e_jF(-\mu_j,\mu_j+\al+\beta+1;\al+1;t) & \text{ at }t=0\text{ }(x=1) \\
F(-\mu_j,\mu_j+\al+\beta+1;\beta+1;1-t) & \text{ at }t=1\text{ }(x=-1)
\end{cases}\right\}, \\
g_j(t):=
\left.\begin{cases}
t^{-\al}F(-\mu_j-\al,\mu_j+\beta+1;1-\al;t) & \text{ at }t=0\text{ }(x=1) \\
(1-t)^{-\beta}F(-\mu_j-\beta,\mu_j+\al+1;1-\beta;1-t) & \text{ at }t=1\text{ }(x=-1)
\end{cases}\right\},
\end{aligned}
\end{equation}
where the constant $e_j$ normalizes the sesquilinear form so that $[f_j,g_j](1)=1$ for all $j$. A priori, this constant is clearly just $1/[\widetilde{f}_j,g_j](1)$, where $\widetilde{f}_j:=f_j/e_j$, but a more explicit form can be given in some cases, see equation \eqref{e-edefn}. 

All calculations in this section will take place in $\cD\ti{max}^{{\bf J},n}$, so the sesquilinear form will always be $[\cdot,\cdot]_n(x)$ and the $n$ subscript will be omitted throughout for simplicity. We now assume that the fundamental system of solutions $\{f_1,\dots,f_n,g_1,\dots,g_n\}$ to equation \eqref{e-neigenvalue} has the property that the matrix
\begin{align}\label{e-ninitial}
\sbox0{$\begin{matrix}-f_1^{[n]}(-1) & \dots & -f_n^{[n]}(-1) \\ \vdots & \iddots & \vdots \\ -f_1^{[2n-1]}(-1) & \dots & -f_n^{[2n-1]}(-1)\end{matrix}$}
\sbox1{$\begin{matrix}-g_1^{[n]}(-1) & \dots & -g_n^{[n]}(-1) \\ \vdots & \iddots & \vdots \\ -g_1^{[2n-1]}(-1) & \dots & -g_n^{[2n-1]}(-1)\end{matrix}$}
\sbox2{$\begin{matrix}f_1^{\{n-1\}}(-1) & \dots & f_n^{\{n-1\}}(-1) \\ \vdots & \iddots & \vdots \\ f_1^{\{0\}}(-1) & \dots & f_n^{\{0\}}(-1)\end{matrix}$}
\sbox3{$\begin{matrix} g_1^{\{n-1\}}(-1) & \dots & g_n^{\{n-1\}}(-1) \\ \vdots & \iddots & \vdots \\ g_1^{\{0\}}(-1) & \dots & g_n^{\{0\}}(-1)\end{matrix}$}
\left(
\begin{array}{c|c}
\usebox{0}&\usebox{1}\vspace{-.35cm}\\\\
\hline\vspace{-.35cm}\\
 \vphantom{\usebox{0}}\usebox{2}&\usebox{3}
\end{array}
\right)
\end{align}
satisfies initial conditions that make it equal to 
\begin{align*}
\sbox0{$\begin{matrix} &  & \\  & 0 & \\ & & \end{matrix}$}
\sbox1{$\begin{matrix}0 & & 1 \\  & \iddots &  \\ 1 & & 0\end{matrix}$}
\sbox2{$\begin{matrix}0 & & 1 \\  & \iddots &  \\ 1 & & 0\end{matrix}$}
\sbox3{$\begin{matrix} & & \\ & 0 & \\ & & \end{matrix}$}
\left(
\begin{array}{c|c}
\usebox{0}&\usebox{1}\vspace{-.35cm}\\\\
\hline\vspace{-.35cm}\\
 \vphantom{\usebox{0}}\usebox{2}&\usebox{3}
\end{array}
\right).
\end{align*}

Note that these conditions mean that $[f_j,g_j](-1)=1$ for all $j$ automatically. In order to take advantage of our initial conditions, we need to state connection formulas between the endpoints for the hypergeometric function. Namely, \cite[Equation 15.8.4]{DLMF} says
\begin{equation}\label{e-nconnect}
\begin{aligned}
    F(a,b;c;1-z)=&\frac{\pi}{\sin(\pi(c-a-b))\Gamma(c-a)\Gamma(c-b)}F(a,b;a+b-c+1;z) \\
    &-\frac{\pi(1-z)^{c-a-b}}{\sin(\pi(c-a-b))\Gamma(a)\Gamma(b)}F(c-a,c-b;c-a-b+1;z),
\end{aligned}
\end{equation}
which holds for $|ph(z)|<\pi$ and $|ph(1-z)|<\pi$, where $ph(z)$ denotes the principal value of $z$. It is thus necessary to define four sets of parameters that will be dependent on the choice of the solution. For $j\in\NN$ such that $j\leq n$,
\begin{equation}\label{e-nfirstconnect}
\begin{aligned}
   \gamma_j&:=\dfrac{-\pi e_j2^{\beta}}{\sin(-\beta\pi)\Gamma(-\mu_j)\Gamma(\mu_j+\al+\beta+1)}, \\
   \eps_j&:=\dfrac{-\pi 2^{\beta}}{\sin(-\beta\pi)\Gamma(-\mu_j-\al)\Gamma(\mu_j+\beta+1)},
\end{aligned}
\end{equation}
\begin{equation}\label{e-nsecondconnect}
\begin{aligned}
   \delta_j&:=\dfrac{\pi e_j}{\sin(-\beta\pi)\Gamma(\mu_j+\al+1)\Gamma(-\mu_j-\beta)}, \\
   \eta_j&:=\dfrac{\pi}{\sin(-\beta\pi)\Gamma(1+\mu_j)\Gamma(-\mu_j-\al-\beta)}.
\end{aligned}
\end{equation}
Hence, the solutions at the endpoint $x=1$ have the following decompositions for $k\in\NN$ such that $k\leq n$:
\begin{align*}
    f_j^{[2n-k]}(1)&=c_jf_j^{[2n-k]}(-1)-\gamma_jg_j^{[2n-k]}(-1) \\
    &=\left.\begin{cases}
    0 & \text{ if }k\neq j \\
    \gamma_k & \text{ if } k=j
    \end{cases}\right\},
\end{align*}
\begin{align*}
    g_j^{[2n-k]}(1)&=c_jf_j^{[2n-k]}(-1)-\eps_jg_j^{[2n-k]}(-1) \\
    &=\left.\begin{cases}
    0 & \text{ if }k\neq j \\
    \eps_k & \text{ if } k=j
    \end{cases}\right\},
\end{align*}
due to the initial conditions, for some constants $c_j$. The other operations act on the solutions similarly:
\begin{align*}
    f_j^{\{k-1\}}(1)&=\delta_jf_j^{\{k-1\}}(-1)+d_jg_j^{\{k-1\}}(-1) \\
    &=\left.\begin{cases}
    0 & \text{ if }k\neq j \\
    \delta_k & \text{ if } k=j
    \end{cases}\right\},
\end{align*}
\begin{align*}
    g_j^{\{k-1\}}(1)&=\eta_jf_j^{\{k-1\}}(-1)+d_jg_j^{\{k-1\}}(-1) \\
    &=\left.\begin{cases}
    0 & \text{ if }k\neq j \\
    \eta_k & \text{ if } k=j
    \end{cases}\right\},
\end{align*}
for some constants $d_j$. Note that the functions $f_j$ and $g_j$ are used in these formulas with the variable $x$, as the $2^{\beta}$ in the constants comes from switching the variable back from $t$.

We now utilize Definition \ref{d-mfunction} to determine the $m$-function associated with the self-adjoint extension ${\bf A}_0^n$ corresponding to $\Gamma_0$, which is the restriction of $\cD\ti{max}^{{\bf J},n}$ defined on 
\begin{align}\label{e-dirichletdomain}
\dom {\bf A}_0^n:=\left\{f\in\cD\ti{max}^{{\bf J},n} ~:~ f\in\ker(\Gamma_0)\right\}.
\end{align}
The boundary condition for the domain is an analog of the Neumann boundary conditions applied to regular Sturm--Liouville differential operators, as we will see.

Finally, we denote by $\fI_m$ the $m\times m$ square matrix with $1$'s on the anti-diagonal and $\cI_m$ the identity matrix of size $m$. Notice that, thanks to equations \eqref{e-nfirstconnect} and \eqref{e-nsecondconnect}, $\Gamma_0\{f_j,g_j\}$ is equal to
\begin{equation*}
\sbox0{$\begin{matrix} & & \\  & 0 &  \\  & & \end{matrix}$}
\sbox1{$\begin{matrix} & &  \\  & \fI_n &  \\  &  & \end{matrix}$}
\sbox2{$\begin{matrix} f_1^{[n]}(1) & \dots & f_n^{[n]}(1) \\ \vdots & \iddots & \vdots \\ f_1^{[2n-1]}(1) & \dots & f_n^{[2n-1]}(1) \end{matrix}$}
\sbox3{$\begin{matrix} g_1^{[n]}(1) & \dots & g_n^{[n]}(1) \\ \vdots & \iddots & \vdots \\ g_1^{[2n-1]}(1) & \dots & g_n^{[2n-1]}(1)\end{matrix}$}
\left(
\begin{array}{c|c}
\usebox{0}&\usebox{1}\vspace{-.35cm}\\\\
\hline\vspace{-.35cm}\\
 \vphantom{\usebox{0}}\usebox{2}&\usebox{3}
\end{array}
\right),
\end{equation*}
and simplifies to
\begin{equation}\label{e-gamma0plugin}
\Gamma_0\{f_j,g_j\}=
\sbox0{$\begin{matrix} & & \\  & 0 &  \\  & &\end{matrix}$}
\sbox1{$\begin{matrix} &  &  \\  & \fI_n &  \\  &  & \end{matrix}$}
\sbox2{$\begin{matrix}0 &  & \gamma_n \\  & \iddots &  \\ \gamma_1 &  & 0\end{matrix}$}
\sbox3{$\begin{matrix} 0 &  & \eps_n \\  & \iddots &  \\ \eps_1 &  & 0\end{matrix}$}
\left(
\begin{array}{c|c}
\usebox{0}&\usebox{1}\vspace{-.35cm}\\\\
\hline\vspace{-.35cm}\\
 \vphantom{\usebox{0}}\usebox{2}&\usebox{3}
\end{array}
\right),
\end{equation}
from the connecting formulas above. The analogous expression for $\Gamma_1\{f_j,g_j\}$ similarly collapses to
\begin{equation}\label{e-gamma1plugin}
\Gamma_1\{f_j,g_j\}=
\sbox0{$\begin{matrix} & & \\  & \fI_n &  \\  & &\end{matrix}$}
\sbox1{$\begin{matrix} &  &  \\  & 0 &  \\  &  & \end{matrix}$}
\sbox2{$\begin{matrix} 0 & & \delta_n \\  & \iddots &  \\ \delta_1 &  & 0\end{matrix}$}
\sbox3{$\begin{matrix} 0 &  & \eta_n \\  & \iddots &  \\ \eta_1 &  & 0\end{matrix}$}
\left(
\begin{array}{c|c}
\usebox{0}&\usebox{1}\vspace{-.35cm}\\\\
\hline\vspace{-.35cm}\\
 \vphantom{\usebox{0}}\usebox{2}&\usebox{3}
\end{array}
\right).
\end{equation}
Recall the well-known block matrix inversion formula
\begin{equation}
    \left(\begin{array}{cc}
    A & B \\
    C & D 
    \end{array}\right)
    =\left(\begin{array}{cc}
    (A-BD^{-1}C)^{-1} & -(A-BD^{-1}C)^{-1}BD^{-1} \\
    -D^{-1}C(A-BD^{-1}C)^{-1} & D^{-1}(I+C(A-BD^{-1}C)^{-1}BD^{-1} 
    \end{array}\right),
\end{equation}
which is valid when $D$ and $A-BD^{-1}C$ are invertible. Note that $\Gamma_0\{f_j,g_j\}$ satisfies these properties (matching the quadrants appropriately) as $D$ is trivially invertible and 
\begin{equation*}
    (A-BD^{-1}C)^{-1}=\left(\begin{array}{ccc}
    0 & & -\dfrac{\eps_1}{\gamma_1} \\
    & \iddots & \\
    -\dfrac{\eps_n}{\gamma_n} & & 0
    \end{array}\right).
\end{equation*}
Definition \ref{d-mfunction} thus yields
\begin{align}\label{e-0mfunction}
        \Gamma_1\left(\Gamma_0\{f_j,g_j\}\right)^{-1}&=
\sbox0{$\begin{matrix} & & \\  & \fI_n &  \\  & &\end{matrix}$}
\sbox1{$\begin{matrix} &  &  \\  & 0 &  \\  &  & \end{matrix}$}
\sbox2{$\begin{matrix} 0 & & \delta_n \\  & \iddots &  \\ \delta_1 &  & 0\end{matrix}$}
\sbox3{$\begin{matrix} 0 &  & \eta_n \\  & \iddots &  \\ \eta_1 &  & 0\end{matrix}$}
\left(
\begin{array}{c|c}
\usebox{0}&\usebox{1}\vspace{-.35cm}\\\\
\hline\vspace{-.35cm}\\
 \vphantom{\usebox{0}}\usebox{2}&\usebox{3}
\end{array}
\right)
\sbox0{$\begin{matrix} 0 & & -\dfrac{\eps_1}{\gamma_1}\\  & \iddots &  \\  -\dfrac{\eps_n}{\gamma_n} & & 0\end{matrix}$}
\sbox1{$\begin{matrix} 0 &  & \dfrac{1}{\gamma_1} \\  & \iddots &  \\ \dfrac{1}{\gamma_n} &  & 0 \end{matrix}$}
\sbox2{$\begin{matrix}  & &  \\  & \fI_n &  \\   &  & \end{matrix}$}
\sbox3{$\begin{matrix}  &  &  \\  & 0 &  \\  &  &  \end{matrix}$}
\left(
\begin{array}{c|c}
\usebox{0}&\usebox{1}\vspace{-.35cm}\\\\
\hline\vspace{-.35cm}\\
 \vphantom{\usebox{0}}\usebox{2}&\usebox{3}
\end{array}
\right)\nonumber \\
&=
\sbox0{$\begin{matrix}  -\dfrac{\eps_n}{\gamma_n} & & 0 \\  & \ddots &  \\  0 &  & -\dfrac{\eps_1}{\gamma_1} \end{matrix}$}
\sbox1{$\begin{matrix} \dfrac{1}{\gamma_n} &  & 0 \\  & \ddots &  \\ 0 &  & \dfrac{1}{\gamma_1} \end{matrix}$}
\sbox2{$\begin{matrix} \dfrac{1}{\gamma_n} & & 0 \\  & \ddots &  \\ 0 &  &  \dfrac{1}{\gamma_1} \end{matrix}$}
\sbox3{$\begin{matrix} \dfrac{\delta_n}{\gamma_n} &  & 0 \\  & \ddots &  \\ 0 &  & \dfrac{\delta_1}{\gamma_1} \end{matrix}$}
\left(
\begin{array}{c|c}
\usebox{0}&\usebox{1}\vspace{-.35cm}\\\\
\hline\vspace{-.35cm}\\
 \vphantom{\usebox{0}}\usebox{2}&\usebox{3}
\end{array}
\right):=M_0^n(\la).
\end{align}
The lower left quadrant in the last matrix was simplified by the assumption that $[f_j,g_j](1)=1$ for all $j$. Indeed, the entries for this quadrant are of the following form:
\begin{align}\label{e-sesquisimplify}
    \dfrac{\gamma_j\eta_j-\delta_j\eps_j}{\gamma_j}&=\dfrac{f_j^{[2n-j]}(1)g_j^{\{j-1\}}(1)-f_j^{\{j-1\}}(1)g_j^{[2n-j]}(1)}{\gamma_j} \\
    &=\dfrac{[f_j,g_j](1)}{\gamma_j}=\dfrac{1}{\gamma_j}. \nonumber
\end{align}

The constant $e_j$ is important here; if it is omitted from the definition of $\gamma_j$ and $\delta_j$, the expression $\gamma_j\eta_j-\delta_j\eps_j=[\widetilde{f}_j,g_j](1)$ still holds and can be simplified using
\begin{align*}
    \Gamma(1-z)\Gamma(z)=\dfrac{\pi}{\sin(\pi z)} \text{ for }z\notin\ZZ,
\end{align*}
and some trigonometric identities to determine
\begin{align}\label{e-edefn}
e_j=\dfrac{\sin(\pi\beta)}{2^{\beta}\sin(\pi\al)},
\end{align}
when $\mu_j+\al+1$, $\mu_j+\beta+1$, $1+\mu_j$, and $\mu_j+\al+\beta+1$ are not in $\ZZ$. The surprising part is that in these cases $e_j$ does not depend on the spectral parameter, and these conditions are associated with the spectrum of important self-adjoint extensions, as will be shown shortly. 

$M_0^n(\la)$ is the $m$-function for the boundary triple $\{\CC^{2n},\Gamma_0,\Gamma_1\}$ associated with the self-adjoint extension ${\bf A}_0^n$ of $\cD\ti{min}^{{\bf J},n}$ and domain given in equation \eqref{e-dirichletdomain}. The spectrum of ${\bf A}_0^n$ is discrete and eigenvalues are located at those $z\in\CC$ which are poles of $M_0^n(\la)$. 

The function $1/\Gamma(z)$ is analytic, so all spectral information will come from the $1/\gamma_j$ terms. Poles thus occur when $-\mu_j=-m$ or $\mu_j+\al+\beta+1=-m$, for $m\in\NN_0$. Hence, $\mu_j=m$ or $\mu_j=-m-\al-\beta-1$. In both cases, the formula $\la=\mu_j^n(\mu_j+\al+\beta+1)^n$ yields the same result and the spectrum, $\sigma_0^n$, of ${\bf A}_0^n$ is
\begin{align}\label{e-a0spectrum}
    \sigma_0^n=\left\{ m^n(m+\al+\beta+1)^n ~:~ m\in\NN_0 \right\}.
\end{align}

The operator ${\bf A}_0$ clearly includes all of the Jacobi polynomials in its domain, and upon inspection they must be the eigenfunctions associated with the eigenvalues in equation \eqref{e-a0spectrum}. We conclude that $\dom {\bf A}_0$ coincides with the $n/2$ left-definite domain due to \cite[Theorem 5.2]{FL}, which analyzes such domains. See also \cite{EKLWY, LW02} for more on left-definite operators and domains. It seems that this is the first instance in which it can be stated that a left-definite domain is actually the Friedrichs extension.

\begin{cor}
The operator ${\bf A}^n_0$ associated with $\dom {\bf A}_0^n$ is the Friedrichs extension of the minimal operator ${\bf A}\ti{min}^n$ associated with $\cD\ti{min}^{{\bf J},n}$.
\end{cor}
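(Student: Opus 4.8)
The plan is to realize ${\bf A}_0^n$ as the $n$-th power of the classical Jacobi operator and then to recognize that power as the Friedrichs extension of ${\bf A}\ti{min}^n$ through the form-domain characterization. I would not recompute any of the boundary data obtained above; the whole point is that identifying the Friedrichs extension is not a boundary-value statement but a statement about form domains, supplied by the left-definite results of \cite{FL, EKLWY}.

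First I would fix the classical self-adjoint Jacobi operator ${\bf A}_1$ of Section \ref{s-jacobi}, shown there (via \cite[Corollary 6.11.9]{BdS}) to be the Friedrichs extension of the uncomposed minimal operator and to have the Jacobi polynomials $P_m^{(\al,\beta)}$ as a complete orthogonal eigenbasis with eigenvalues $\la_m=m(m+\al+\beta+1)\geq 0$. Its power $({\bf A}_1)^n$, defined by the functional calculus, is then self-adjoint and nonnegative with the same eigenbasis and eigenvalues $\la_m^n$. The discussion preceding the corollary shows that every $P_m^{(\al,\beta)}$ lies in $\dom {\bf A}_0^n$, satisfies $\Gamma_0 P_m^{(\al,\beta)}=0$, and is an eigenfunction of ${\bf A}_0^n$ with eigenvalue $m^n(m+\al+\beta+1)^n=\la_m^n$; by \eqref{e-a0spectrum} these exhaust $\sigma_0^n$. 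Since the Jacobi polynomials are complete in $L^2_{\al,\beta}(-1,1)$ and both operators are self-adjoint with this common eigenbasis and these identical eigenvalues, they coincide: ${\bf A}_0^n=({\bf A}_1)^n$.

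It remains to identify $({\bf A}_1)^n$ with the Friedrichs extension of ${\bf A}\ti{min}^n$. I would invoke the characterization of the Friedrichs extension of a lower semibounded symmetric operator as the unique self-adjoint extension whose operator domain is contained in the form domain $\mathcal{Q}$, the completion of $\dom {\bf A}\ti{min}^n$ in the norm $\|u\|_+^2=\langle \ell_{{\bf J}}^n[u],u\rangle+C\|u\|^2$. It then suffices to prove $\dom {\bf A}_0^n\subseteq\mathcal{Q}$, after which uniqueness forces ${\bf A}_0^n=({\bf A}\ti{min}^n)_F$. Here \cite[Theorem 5.2]{FL} enters: it identifies $\dom {\bf A}_0^n$ with the $n/2$ left-definite domain attached to the Jacobi operator, and the left-definite spaces of \cite{EKLWY, LW02} are by construction the successive form domains of the classical operator. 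Concretely, for $u\in\dom(({\bf A}_1)^n)$ one has $\langle\ell_{{\bf J}}^n[u],u\rangle=\|({\bf A}_1)^{n/2}u\|^2$, so the relevant form norm is the $({\bf A}_1)^{n/2}$-norm, and the content of the left-definite identification is exactly that $\dom {\bf A}\ti{min}^n$ is dense in this space, i.e.\ is a form core.

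The main obstacle is precisely this density/form-core statement: showing that the regular functions comprising $\dom {\bf A}_0^n$ (in particular the Jacobi polynomials, which are \emph{not} in $\dom {\bf A}\ti{min}^n$) are form-limits of minimal-domain functions, so that $\dom {\bf A}_0^n\subseteq\mathcal{Q}$. This containment is the whole novelty of the statement, it cannot be read off from the $m$-function computation, and it rests on the structural description of the left-definite domains in \cite{FL, EKLWY}. An alternative, more self-contained route would apply the semibounded boundary-triple criterion of \cite{BdS}, verifying that the explicit Weyl function $M_0^n(\la)$ of \eqref{e-0mfunction} tends to $-\infty$ as $\la\to-\infty$ (the criterion for $\ker\Gamma_0$ to be the Friedrichs extension); this avoids left-definite theory but is encumbered by the branch structure $\la_j^n=\la$ feeding the Gamma-function factors $\gamma_j,\delta_j,\eps_j,\eta_j$, which makes the asymptotics along the negative axis delicate.
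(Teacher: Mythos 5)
Your first step is sound, and it is a genuinely different observation from anything in the paper's proof: since ${\bf A}_0^n$ is self-adjoint with discrete spectrum, and the Jacobi polynomials form a complete orthogonal family of eigenfunctions whose eigenvalues exhaust $\sigma_0^n$ by \eqref{e-a0spectrum}, every eigenspace is one-dimensional and ${\bf A}_0^n$ coincides with the functional-calculus power $({\bf A}_1)^n$ of the classical Jacobi operator of Section \ref{s-jacobi}. The gap is in your second step, and it is not a technicality: the assertion that $({\bf A}_1)^n$ is the Friedrichs extension of ${\bf A}\ti{min}^n$ \emph{is} the content of the corollary, and you dispose of it by attributing the required form-core statement --- that $\cD\ti{min}^{{\bf J},n}$ is dense in $\dom\bigl(({\bf A}_1)^{n/2}\bigr)$ with respect to the form norm --- to \cite{FL, EKLWY}. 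That statement is not in those references. Left-definite theory (\cite{LW02, EKLWY}) works entirely with the self-adjoint operator ${\bf A}_1$ and its powers; it identifies the $n$-th left-definite space with $\dom\bigl(({\bf A}_1)^{n/2}\bigr)$, but it says nothing about the minimal operator of $\ell_{{\bf J}}^n$ or the closure of its quadratic form, and \cite[Theorem 5.2]{FL} characterizes left-definite domains through GKN-type boundary conditions, again with no contact with form closures of minimal operators. Indeed, the paper remarks immediately before the corollary that this is the first instance in which a left-definite domain is identified as a Friedrichs extension --- so the identification you want to cite is precisely what is being proved here for the first time, and as written your argument is circular. Your fallback route (showing $M_0^n(\la)\to-\infty$ as $\la\to-\infty$ via the semibounded boundary-triple criterion of \cite{BdS}) would be legitimate, but you leave it undone and correctly note that the branch structure $\la_j^n=\la$ entering $\gamma_j,\delta_j,\eps_j,\eta_j$ makes it delicate.

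For contrast, the paper's proof never touches form domains. It invokes \cite[Theorem 13]{MZ} (Marletta--Zettl), which describes $\dom{\bf A}_F$ as the set of $f\in\cD\ti{max}^{{\bf J},n}$ with $[f,y^{(k)}]_n(-1)=[f,y^{(n+k)}]_n(1)=0$ for $k=1,\dots,n$, where the $y$'s are the principal solutions of $\ell_{{\bf J}}^n[f]=\la f$ at the two endpoints. Since the principal solutions have the same endpoint behavior as the $f_j$ of \eqref{e-nsolution} and the decomposition of Corollary \ref{c-newerdefectspaces} applies, every $f\in\ker\Gamma_0$ satisfies these conditions, giving $\dom{\bf A}_0^n\subseteq\dom{\bf A}_F$; equality follows because both domains are self-adjoint restrictions of $\cD\ti{max}^{{\bf J},n}$ extending $\cD\ti{min}^{{\bf J},n}$ by the same $2n$ dimensions (equivalently, a self-adjoint operator admits no proper self-adjoint extension). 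If you want to salvage your approach, the cleanest repair is to keep your step~1 and replace the left-definite citation in step~2 by this boundary-condition argument; proving your density statement directly would amount to new work on the weighted Sobolev-type spaces of \cite{EKLWY}, not a citation.
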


\begin{proof}
Let the Friedrichs Extension associated with the differential expression $\ell^n_{{\bf J}}$ be denoted by ${\bf A}_F$. Theorem 13 of \cite{MZ} states that the boundary conditions imposed on $\cD\ti{max}^{{\bf J},n}$ are of the form 
\begin{align*}
    \dom{\bf A}_F=\left\{f\in\cD\ti{max}^{{\bf J},n} ~:~[f,y^{(k)}]_n(-1)=[f,y^{(n+k)}]_n(1)=0,~k=1,\dots,n.\right\},
\end{align*}
where $y^{(k)}$ and $y^{(n+k)}$ denote the principal solutions of the equation $\ell_{\bf J}^n[f]=\la f$ at the endpoints $x=-1$ and $x=1$ respectively. Notice that if $f\in\dom{\bf A}_0^n$ then $f\in\dom{\bf A}_F$ also, as the principal solutions must take the same form near the relevant endpoint as $f_j$ from equation \eqref{e-nsolution} and the decomposition from Corollary \ref{c-newerdefectspaces} still holds. Hence, $\dom{\bf A}_0^n\subseteq\dom{\bf A}_F$. However, both domains are extensions of $\cD\ti{min}^{{\bf J},n}$ by $2n$ dimensions and are associated with self-adjoint operators, so we conclude that $\dom{\bf A}_0^n=\dom{\bf A}_F$.
\end{proof}

The above process can be repeated to determine the $m$-function associated with the self-adjoint extension ${\bf A}_1^n$ corresponding to $\Gamma_1$, which is the restriction of $\cD\ti{max}^{{\bf J},n}$ defined on 
\begin{align}\label{e-neumanndomain}
\dom {\bf A}_1^n:=\left\{f\in\cD\ti{max}^{{\bf J},n} ~:~ f\in\ker(\Gamma_1)\right\}.
\end{align}
The boundary condition for the domain is an analog of the Dirichlet boundary conditions applied to regular Sturm--Liouville differential operators, as we will see.

The formulas \eqref{e-gamma0plugin} and \eqref{e-gamma1plugin} can again be used in conjunction with Definition \ref{d-mfunction} to determine $\Gamma_0\left(\Gamma_1\{f_j,g_j\}\right)^{-1}$, but first $(\Gamma_1\{f_j,g_j\}^{-1}$ must be computed. Again, matrix block inversion can be used, this time with the alternate formula
\begin{align*}
    \left(\begin{array}{cc}
    A & B \\
    C & D 
    \end{array}\right)
    =\left(\begin{array}{cc}
    A^{-1}(I+B(D-CA^{-1}B)^{-1}CA^{-1}) & -AB(D-CA^{-1}B)^{-1} \\
    -(D-CA^{-1}B)^{-1}CA^{-1} & (D-CA^{-1}B)^{-1} 
    \end{array}\right),
\end{align*}
which is valid when $A$ and $D-CA^{-1}B$ are invertible. Note that $\Gamma_1\{f_j,g_j\}$ satisfies these properties (matching the quadrants appropriately) as $A$ is trivially invertible and 
\begin{equation*}
    (D-CA^{-1}B)^{-1}=\left(\begin{array}{ccc}
    0 & & \dfrac{1}{\eta_1} \\
    & \iddots & \\
    \dfrac{1}{\eta_n} & & 0
    \end{array}\right).
\end{equation*}

Thus, calculate
\begin{align}\label{e-1mfunction}
        \Gamma_0\left(\Gamma_1\{f_j,g_j\}\right)^{-1}&=
\sbox0{$\begin{matrix} & & \\  & 0 &  \\  & &\end{matrix}$}
\sbox1{$\begin{matrix} &  &  \\  & \fI_n &  \\  &  & \end{matrix}$}
\sbox2{$\begin{matrix} 0 & & \gamma_n \\  & \iddots &  \\ \gamma_1 &  & 0\end{matrix}$}
\sbox3{$\begin{matrix} 0 &  & \eps_n \\  & \iddots &  \\ \eps_1 &  & 0\end{matrix}$}
\left(
\begin{array}{c|c}
\usebox{0}&\usebox{1}\vspace{-.35cm}\\\\
\hline\vspace{-.35cm}\\
 \vphantom{\usebox{0}}\usebox{2}&\usebox{3}
\end{array}
\right)
\sbox0{$\begin{matrix}  & & \\  & \fI_n &  \\  &  &  \end{matrix}$}
\sbox1{$\begin{matrix} &  &  \\  & 0 &  \\  &  & \end{matrix}$}
\sbox2{$\begin{matrix} 0 & & -\dfrac{\delta_1}{\eta_1} \\  & \iddots &  \\ -\dfrac{\delta_n}{\eta_n}  &  & 0 \end{matrix}$}
\sbox3{$\begin{matrix} 0 &  & \dfrac{1}{\eta_1} \\  & \iddots &  \\  \dfrac{1}{\eta_n} &  & 0 \end{matrix}$}
\left(
\begin{array}{c|c}
\usebox{0}&\usebox{1}\vspace{-.35cm}\\\\
\hline\vspace{-.35cm}\\
 \vphantom{\usebox{0}}\usebox{2}&\usebox{3}
\end{array}
\right) \nonumber\\
&=
\sbox0{$\begin{matrix}  -\dfrac{\delta_n}{\eta_n} & & 0 \\  & \ddots &  \\  0 &  & -\dfrac{\delta_1}{\eta_1} \end{matrix}$}
\sbox1{$\begin{matrix} \dfrac{1}{\eta_n} &  & 0 \\  & \ddots &  \\ 0 &  & \dfrac{1}{\eta_1} \end{matrix}$}
\sbox2{$\begin{matrix} \dfrac{1}{\eta_n} & & 0 \\  & \ddots &  \\ 0 &  &  \dfrac{1}{\eta_1} \end{matrix}$}
\sbox3{$\begin{matrix} \dfrac{\eps_n}{\eta_n} &  & 0 \\  & \ddots &  \\ 0 &  & \dfrac{\eps_1}{\eta_1} \end{matrix}$}
\left(
\begin{array}{c|c}
\usebox{0}&\usebox{1}\vspace{-.35cm}\\\\
\hline\vspace{-.35cm}\\
 \vphantom{\usebox{0}}\usebox{2}&\usebox{3}
\end{array}
\right):=M_1^n(\la).
\end{align}
The entries of the lower left quadrant of $M_1^n(\la)$ were again simplified by equation \eqref{e-sesquisimplify}. $M_1^n(\la)$ is the $m$-function for the boundary triple $\{\CC^{2n},\Gamma_0,\Gamma_1\}$ associated with the self-adjoint extension ${\bf A}_1^n$ of $\cD\ti{min}^{{\bf J},n}$ and domain given in equation \eqref{e-neumanndomain}. The spectrum of ${\bf A}_1^n$ is discrete and eigenvalues are located at those $z\in\CC$ that are poles of $M_1^n(\la)$, which come from the $1/\eta_j$ terms. 

Poles thus occur when $1+\mu_j=-m$ or $-\mu_j-\al-\beta=-m$, for $m\in\NN_0$. Hence, $\mu_j=-m-1$ or $\mu_j=m-\al-\beta$. In both cases, the formula $\la=\mu_j^n(\mu_j+\al+\beta+1)^n$ yields the same result and the spectrum, $\sigma_1^n$, of ${\bf A}_1^n$ is
\begin{align}\label{e-a1spectrum}
    \sigma_1^n=\left\{ (m+1)^n(m-\al-\beta)^n ~:~ m\in\NN_0 \right\}.
\end{align}

The operator ${\bf A}_1$ clearly includes all of the Jacobi functions of the second kind in its domain, and upon inspection the eigenfunctions associated with the eigenvalues in equation \eqref{e-a1spectrum} must be such functions. 

It is thought that the operator ${\bf A}_1$ is the von Neumann--Krein extension of $\cD\ti{min}^{{\bf J},n}$. Unfortunately, a theorem analogous to \cite[Theorem 13]{MZ} that relates this extension with the non-principal solutions does not seem to exist in the literature.

\subsection{Other Boundary Conditions}\label{ss-otherbc}

The two-self-adjoint extensions analyzed thus far were naturally defined by the choice of the boundary triple. But the theory of boundary triples provides many tools for analyzing other boundary conditions. Two additional scenarios are now explored: when so-called separated boundary conditions are imposed, and when an analog of periodic boundary conditions are imposed. In each case, an explicit $m$-function is able to be derived thanks to the formula for $M_0^n(\la)$ in equation \eqref{e-0mfunction}.

Consider the matrix 
\begin{align*}
\te=
    \left(\begin{array}{ccc}
    c_1 & & 0 \\
     & \ddots & \\
     0 & & c_{2n}
    \end{array}\right),
\end{align*}
so that the operator ${\bf A}_{\te}^n$ is the self-adjoint extension of $\cD\ti{min}^{{\bf J},n}$ which acts on
\begin{align}\label{e-newthetadomain}
    \dom {\bf A}_{\te}^n=\left\{f\in\cD\ti{max}^{{\bf J},n} ~:~ \te\Gamma_0[f]=\Gamma_1[f]\right\}.
\end{align}
There are thus $n$ boundary conditions imposed on $\dom {\bf A}_{\te}^n$ at each endpoint, each condition involving two quasi-derivatives: $-c_jf^{[n+j-1]}(-1)=f^{\{n-j\}}(-1)$ when $j\leq n$ and $-c_jf^{[j-1]}(1)=f^{\{2n-j\}}(1)$ when $n<j\leq 2n$.

The $m$-function associated with the self-adjoint extension ${\bf A}_{\te}^n$ is then given via equation \eqref{e-thetam} as $(\te-M_0^n(\la))^{-1}$, which can be simplified to
\begin{align}\label{e-nthetam}
M_{\te}^n(\la)=
\left(\begin{array}{cc}
     \widetilde{A} & \widetilde{B} \\
     \widetilde{C} & \widetilde{D}
\end{array}\right),
\end{align}
where
\begin{align*}
        \widetilde{A}&=\left(\begin{array}{ccc} \dfrac{\gamma_n(c_{n+1}\gamma_n-\delta_n)}{(c_1\gamma_n+\eps_n)(c_{n+1}\gamma_n-\delta_n)-1} & & 0 \\  & \ddots &  \\  0 &  & \dfrac{\gamma_1(c_{2n}\gamma_1-\delta_1)}{(c_n\gamma_1+\eps_1)(c_{2n}\gamma_1-\delta_1)-1}
    \end{array}\right), \\
    \widetilde{B}=\widetilde{C}&=\left(\begin{array}{ccc}
         \dfrac{-\gamma_n}{(c_1\gamma_n+\eps_n)(c_{n+1}\gamma_n-\delta_n)-1} & & 0 \\  & \ddots &  \\  0 &  & \dfrac{-\gamma_1}{(c_n\gamma_1+\eps_1)(c_{2n}\gamma_1-\delta_1)-1} 
    \end{array}\right), \\
    \widetilde{D}&=\left(\begin{array}{ccc}
       \dfrac{\gamma_n(c_1\gamma_n+\eps_n)}{(c_1\gamma_n+\eps_n)(c_{n+1}\gamma_n-\delta_n)-1} & & 0 \\  & \ddots &  \\  0 &  & \dfrac{\gamma_1(c_n\gamma_1+\eps_1)}{(c_n\gamma_1+\eps_1)(c_{2n}\gamma_1-\delta_1)-1}
    \end{array}\right).
\end{align*}

The spectral properties given by $M_{\te}^n(\la)$ are not as easy to determine as in the previous examples. They arise when
\begin{align*}
    (c_j\gamma_{n-j+1}+\eps_{n-j+1})(c_{n+j}\gamma_{n-j+1}-\delta_{n-j+1})=1,
\end{align*}
but the explicit values that $\mu_j$ must take are unclear. Indeed, closed form solutions may not be able to be determined for such an equation. The expression does resemble the form for regular Sturm--Liouville ($n=1$) operators with separated boundary conditions, i.e. \cite[Example 6.3.6]{BdS}. 

Finally, we consider an analog of periodic boundary conditions by using Theorem \ref{t-weyltransform}. Let
\begin{align}\label{e-w}
\cW=\dfrac{1}{\sqrt{2}}
\sbox0{$\begin{matrix}  \cI_n & \cI_n \\ 0 & 0 \end{matrix}$}
\sbox1{$\begin{matrix} 0 & 0 \\  \cI_n & -\cI_n \end{matrix}$}
\sbox2{$\begin{matrix} 0 & 0 \\  -\cI_n & \cI_n \end{matrix}$}
\sbox3{$\begin{matrix} \cI_n & \cI_n \\ 0 & 0 \end{matrix}$}
\left(
\begin{array}{c|c}
\usebox{0}&\usebox{1}\vspace{-.35cm}\\\\
\hline\vspace{-.35cm}\\
 \vphantom{\usebox{0}}\usebox{2}&\usebox{3}
\end{array}
\right)
=\left(\begin{array}{cc}
\cB^* & -\cA^* \\
\cA^* & \cB^*
\end{array}\right).
\end{align}
Note the $2n\times 2n$ matrices $\cA$ and $\cB$ satisfy the conditions $\cA^*\cB=\cB^*\cA$, $\cA\cB^*=\cB\cA^*$ and $\cA\cA^*+\cB\cB^*=I=\cA^*\cA+\cB^*\cB$. A new boundary triple for $\cD\ti{max}^{{\bf J},n}$ is thus $\{\CC^{2n},\Gamma_0',\Gamma_1'\}$, where $\Gamma_0'$ and $\Gamma_1'$ are given by 
\begin{align*}
    \left(\begin{array}{c}
    \Gamma_0' \\
    \Gamma_1'
    \end{array}\right)
    =\cW
    \left(\begin{array}{c}
    \Gamma_0 \\
    \Gamma_1
    \end{array}\right).
\end{align*}
Explicitly, the maps $\Gamma_0',\Gamma_1':\cD\ti{max}^{{\bf J},n}\to \CC^{2n}$ act via
\begin{align*}
\Gamma_0' f:=\left(
\begin{array}{c}
-f^{[n]}(-1)+f^{[n]}(1) \\
\vdots \\
-f^{[2n-1]}(-1)+f^{[2n-1]}(1) \\
f^{\{n-1\}}(-1)-f^{\{n-1\}}(1) \\
\vdots \\
f^{\{0\}}(-1)-f^{\{0\}}(1)
\end{array} 
\right), \hspace{.2cm}
\Gamma_1' f:=\left(
\begin{array}{c}
f^{\{n-1\}}(-1)+f^{\{n-1\}}(1) \\
\vdots \\
f^{\{0\}}(-1)+f^{\{0\}}(1) \\
f^{[n]}(-1)+f^{[n]}(1) \\
\vdots \\
f^{[2n-1]}(-1)+f^{[2n-1]}(1)
\end{array} 
\right),
\end{align*}
and there is a $1/\sqrt{2}$ factor hidden from each term for simplicity. The $m$-function associated with the self-adjoint extension of $\cD\ti{min}^{{\bf J},n}$ which acts on
\begin{align}\label{e-newwdomain}
    \dom {\bf B}_0^n=\left\{f\in\cD\ti{max}^{{\bf J},n} ~:~ f\in\ker\Gamma_0'\right\},
\end{align}
is then given by 
\begin{align*}
    M_0'(\la)=(\cA^*+\cB^*M_0^n(\la))(\cB^*-\cA^*M_0^n(\la))^{-1}.
\end{align*}
Of course this domain will only include functions that have the same value for each quasi-derivative at each endpoint. We have
\begin{align*}
M_0'(\la)=
\sbox0{$\begin{matrix}  \dfrac{2\eta_n}{\eps_n-\delta_n+2} & & 0 \\  & \ddots &  \\  0 &  & \dfrac{2\eta_1}{\eps_1-\delta_1+2} \end{matrix}$}
\sbox1{$\begin{matrix} \dfrac{\eps_n+\delta_n}{\eps_n-\delta_n+2} & & 0 \\  & \ddots &  \\  0 &  & \dfrac{\eps_1+\delta_1}{\eps_1-\delta_1+2} \end{matrix}$}
\sbox2{$\begin{matrix} \dfrac{\eps_n+\delta_n}{\eps_n-\delta_n+2} & & 0 \\  & \ddots &  \\  0 &  & \dfrac{\eps_1+\delta_1}{\eps_1-\delta_1+2} \end{matrix}$}
\sbox3{$\begin{matrix} \dfrac{2\gamma_n}{\eps_n-\delta_n+2} & & 0 \\  & \ddots &  \\  0 &  & \dfrac{2\gamma_1}{\eps_1-\delta_1+2} \end{matrix}$}
\left(
\begin{array}{c|c}
\usebox{0}&\usebox{1}\vspace{-.35cm}\\\\
\hline\vspace{-.35cm}\\
 \vphantom{\usebox{0}}\usebox{2}&\usebox{3}
\end{array}
\right).
\end{align*}
Note that in the calculation for the upper-left quadrant the fact that $(1+\eps_j\delta_j)/\gamma_j=\eta_j$ was used. The spectral properties revealed by $M_0'(\la)$ are again difficult to determine. Eigenvalues arise when
\begin{align*}
    \delta_j-\eps_j=2,
\end{align*}
but the explicit values that $\mu_j$ must take are unclear. The expression does resemble that arising from a simpler example for regular Sturm--Liouville ($n=1$) operators \cite[Example 6.3.6]{BdS}.

\section*{Acknowledgements}

The author would like to thank Annemarie Luger for helpful discussions at all stages of the project,  Fritz Gesztesy for assistance with references on multiple occasions and Jussi Behrndt for access to the book manuscript \cite{BdS}. He is also grateful to Constanze Liaw, Fritz Gesztesy, Lance Littlejohn and Roger Nichols for their correspondence.


\end{document}